\newtheorem{theorem}{Theorem}[section]
\newtheorem{corollary}[theorem]{Corollary}
\newtheorem{lemma}[theorem]{Lemma}
\newtheorem{notation}[theorem]{Notation}
\newtheorem{proposition}[theorem]{Proposition}
\theoremstyle{definition}
\newtheorem{definition}[theorem]{Definition}
\newtheorem{question}[theorem]{Question}
\newtheorem{remark}[theorem]{Remark}
\begin{document}

\title{Lattice initial segments of the hyperdegrees}
\author{Richard A. Shore\thanks{%
Partially supported by NSF Grant DMS-0554855.} \\
Department of Mathematics\\
Cornell University\\
Ithaca NY 14853 \and Bj\o rn Kjos-Hanssen\thanks{%
Partially supported as co-PI by NSF grant DMS-0652669. \newline
The authors also thank the referee for suggestions that improved the
presentation in several ways.} \\
Department of Mathematics\\
University of Hawai{\textquoteleft}i at M{\=a}noa\\
Honolulu HI 96822}
\maketitle

\begin{abstract}
We affirm a conjecture of Sacks [1972] by showing that every countable
distributive lattice is isomorphic to an initial segment of the
hyperdegrees, $\mathcal{D}_{h}$. In fact, we prove that every sublattice of
any hyperarithmetic lattice (and so, in particular, every countable locally
finite lattice) is isomorphic to an initial segment of $\mathcal{D}_{h}$.
Corollaries include the decidability of the two quantifier theory of $%
\mathcal{D}_{h}$ and the undecidability of its three quantifier theory. The
key tool in the proof is a new lattice representation theorem that provides
a notion of forcing for which we can prove a version of the fusion lemma in
the hyperarithmetic setting and so the preservation of $\omega _{1}^{CK}$.
Somewhat surprisingly, the set theoretic analog of this forcing does not
preserve $\omega _{1}$. On the other hand, we construct countable lattices
that are not isomorphic to an initial segment of $\mathcal{D}_{h}$.
\end{abstract}
\newpage
\tableofcontents

\section{Introduction}

Given a notion of relative computability or complexity $\leq _{r}$ on sets $A
$ (of natural numbers) or functions $f$ (from $\mathbb{\omega }$ to $\mathbb{%
\omega }$) the corresponding degree structure $\mathcal{D}_{r}$ consists of
the equivalence classes $\deg _{r}(A)$ ($\deg _{r}(f)$) of mutually
computable sets (functions) with the ordering induced by the given
reducibility $\leq _{r}$. These classes are called the $r$-degrees. The
classic example is Turing computability, Turing reducibility $\leq _{T}$ and
the Turing degrees but many others have been studied ranging from
polynomial-time to constructibility. The first level of the scaffolding on
which our understanding of such degree structures is built provides an
analysis of the partial orders that can be embedded in them.

For the Turing degrees the story begins with Kleene and Post [1954] who
proved that every countable partial order can be embedded in $\mathcal{D}%
_{T} $. They used finite approximation methods that today would be called
Cohen forcing in arithmetic. Stronger results about embedding uncountable
partial orders were proven, for example, by Sacks [1963] but the full
question of whether every partial order of size $2^{\aleph _{0}}$ with $%
\{x|x\leq z\}$ countable for every $z$ can be embedded in $\mathcal{D}_{T}$
remains open. Nonetheless, the countable case is more than enough to decide
all one quantifier sentences of $\langle \mathcal{D}_{T},\leq _{T}\rangle $
as being true if and only if consistent with the axioms of partial orders.

The next level of analysis deals with extension of embedding questions such
as density or minimality and, more generally, questions about when a given
realization of a partial order in $\mathcal{D}$ can be extended to a
specified larger partial order. In $\mathcal{D}_{T}$, the long journey along
this road began with Spector's [1956] construction of a minimal degree. His
method can be viewed as using full binary trees in place of finite
characteristic functions for the approximations to the desired set. These
methods were greatly extended by many researchers to embed more and more
lattices as initial segments of $\mathcal{D}_{T}$. We mention a few of the
key steps: Lachlan [1968] showed that every countable distributive lattice
is isomorphic to an initial segment of $\mathcal{D}_{T}$ (and so that its
theory is undecidable); Lerman [1971] did the same for all finite lattices;
Lachlan and Lebeuf [1976] for countable uppersemilattices (\emph{usls}) and
Abraham and Shore [1986] for all usls of size $\aleph _{1}$. (Groszek and
Slaman [1983] show that it is consistent with ZFC that there are lattices of
size $\aleph _{2}\leq 2^{\aleph _{0}}$ with $\{x|x\leq z\}$ countable for
every $z$ that can not be embedded in $\mathcal{D}_{T}$ as initial
segments.) The embedding methods in the countable situations involve
approximations by more and more complicated types of trees whose shape is
tied to a series of representation theorems for (uppersemi)lattices. The
uncountable ones need, in addition, some set theoretic techniques.

At this level of our scaffolding, Lerman's [1971] result on embedding finite
lattices shows that the three quantifier theory of $\mathcal{D}_{T}$ is
undecidable (Schmerl, see Lerman [1983]). Coupled with the methods of Kleene
and Post [1954], it also suffices to establish the decidability of the two
quantifier theory of this structure (Shore [1978] and Lerman [1983, VII.4]).
The case of countable recursive lattices suffices (Miller, Nies and Shore
[2004]) to show the undecidability of the two quantifier theory in the
language extended by symbols for both join and infimum with the
understanding that the result applies to any total extension of the infimum
relation on $\mathcal{D}_{T}$ which, by Kleene and Post [1954], is not a
lattice. (This proof also supplies another one for the undecidability of the
three quantifier theory in the language with just $\leq _{T}$.)

The same results on embedding as partial orderings and initial segments, and
so the corresponding applications to the analysis of their theories, can be
proved by quite analogous types of constructions (Cohen and perfect tree
forcing) for a range of reducibilities from truth table to arithmetic, $%
A\leq _{a}B\Leftrightarrow A\leq _{T}B^{(n)}$ for some $n\in \omega $, (see
e.g. Feferman [1965], Nerode and Shore [1980], Odifreddi [1983], M. Simpson
[1985]). When one moves from the realm of recursion theory and the natural
numbers to set theory and the ordinals, new issues arise.

If we look, for example, at relative constructibility ($A\leq
_{c}B\Leftrightarrow A\in L[B]$ for $A,B\subseteq \omega $) and the
constructibility degrees, $\mathcal{D}_{c}$, of subsets of $\omega $, we are
faced with the new problem of preserving $\omega _{1}$ in our forcing
extensions. For Cohen forcing this was part of Cohen's seminal results and
one can carry over (under suitable set theoretic hypotheses which we ignore
here) the partial order embedding (Cohen [1966]) and related results to $%
\leq _{c}$ with little difficulty (see, e.g. Balcar and Hajek [1978],
Farrington [1983], [1984] and Abraham and Shore [1986a]). Our ability to
preserve $\omega _{1}$ for perfect forcing is due to Sacks [1971] and is
based on what he calls the fusion lemma. It enabled Sacks [1971] to prove
the existence of a minimal degree of constructibility.

The next major steps towards determining the initial segments of $\mathcal{D}%
_{c}$ were taken by Adamowicz who proved first [1976] that all finite
lattices can be embedded as initial segments of D$_{c}$ and then [1977] that
all countable constructible well founded usls can be so embedded. The extra
assumptions Adamowicz needed for her proof turned out to be to some extent
necessary. Abraham and Shore [1986a] showed that not every countable well
founded distributive lattice is isomorphic to an initial segment of $%
\mathcal{D}_{c}$. The crucial fact here is that the relation $\leq _{c}$ is
itself constructible and so one can get the result by coding
nonconstructible sets in distributive lattices. Even if we restrict
attention to constructible lattices, some remnants of the well foundedness
assumption remain necessary. Lubarsky [1987] proved that every countable
lattice isomorphic to an initial segment of $\mathcal{D}_{c}$ is complete.
On the positive side, the best results are those of Groszek and Shore [1988]
that every countable (dual) algebraic lattice $\mathcal{L}$ (i.e.\ $\mathcal{%
L}$ is complete and generated by its compact elements $x$ for which $\wedge
I\leq x$ implies that there is a finite $F\subseteq I$ such that $\wedge
F\leq x$, for every $I\subseteq $ $\mathcal{L}$). (There is new interesting
work, however, by Dorais [2007] on the c-degrees of subsets of $\aleph _{1}$
produced by forcing with Souslin trees instead of countable perfect trees.)

Our concern in this paper is a reducibility and degree notion lying between
that of arithmetic and constructible: the hyperarithmetic degrees $\mathcal{D%
}_{h}$. For Turing reducibility and $\mathcal{D}_{T}$ and arithmetic
reducibility and $\mathcal{D}_{a}$, the setting is first order arithmetic
and the natural numbers or $\omega $. For set theory we have relative
constructibility and $\omega _{1}$. Our setting is second order arithmetic
and $\omega _{1}^{CK}$, the first nonrecursive ordinal. It takes the place
of $\omega _{1}$, the first uncountable ordinal, as its effective analog. We
say that $A$ is hyperarithmetic in $B$, $A\leq _{h}B$, if $A\leq
_{T}B^{(\alpha )}$ for some ordinal $\alpha <\omega _{1}^{B}$, i.e. the
order type of $\alpha $ has a representative recursive in $B$. Here $%
B^{(\alpha )}$ is the $\alpha ^{th}$ iterate of the Turing jump applied to $%
B $ where effective (in $B$) unions are taken at limit levels . For another
view, note that Kleene showed (see Sacks [1990, II.1-2]) that $A\leq _{h}B$
if and only if $A$ is $\Delta _{1}^{1}(B)$.

In this setting, Feferman [1965] (see also Sacks [1990]) introduced a
ramified language for second order arithmetic and the appropriate notion of
Cohen forcing. He proved (among other results) that every countable partial
order is embeddable in $\mathcal{D}_{h}$ (even below the hyperarithmetic
degree of Kleene's $\mathcal{O}$, the complete $\Pi _{1}^{1}$ set). Some
extensions and related results using Cohen forcing are in Thomason [1967]
and [1969] and also Odifreddi [1983a]. The crucial fact needed is the
preservation of $\omega _{1}^{CK}$, i.e.\ if $G$ is Cohen generic in this
setting then $\omega _{1}^{G}=\omega _{1}^{CK}$. (This turns out to be
equivalent to $\Delta _{1}^{1}$-comprehension holding in the generic
extension.)

One can also adapt perfect forcing to this setting to construct a minimal
hyperdegree (Gandy and Sacks [1967] or with more detailed exposition and
explanation Sacks [1971] or [1990]). Again the crucial issue is the
preservation of $\omega _{1}^{CK}$ (or $\Delta _{1}^{1}$-comprehension) by a
fusion lemma (see \S \ref{fusionsec} below). Here delicate definability
issues and the theory of $\Pi _{1}^{1}$ sets play a role not seen in either
the arithmetic or set theoretic settings. In contrast to all the other
degree structures discussed so far, almost nothing more has been known about
initial segments of $\mathcal{D}_{h}$.

Thomason [1970] proved that every finite distributive lattice is isomorphic
to an initial segment of $\mathcal{D}_{h}$ and there matters stood. In his
review of this paper, Sacks [1972] writes as follows:

\textquotedblleft He raises a methodologically interesting open question: is
every countable distributive lattice isomorphic to an initial segment of
hyperdegrees? The answer (according to the reviewer)\ is probably yes, but
(as the author points out)\ the method of the paper does not suffice to
prove it.\textquotedblright

Now in 1970 Lerman's methods for handling nondistributive lattices were not
yet available but for the distributive ones there was as much available then
as now. Thus the methodological issues were not about the initial segment
constructions as used in $\mathcal{D}_{T}$ but rather about making the
analog of the fusion lemma work in more general settings. Constructing
embeddings of infinite lattices required (even in the later Turing degree
constructions) imposing more and more restrictions on the trees as the
construction progressed (often using finite approximations to the lattice to
guide them). This type of forcing condition is not amenable to the arguments
for the fusion lemma as the nature of the conditions change as more elements
of the lattice are considered and so fusing an infinite sequence of such
conditions produces a tree that is not a condition. The argument for the
constructibility degrees gets around this problem by restricting attention
to constructible lattices so that the entire lattice can be treated at once
and so uses trees of a single shape just with branchings that grow to match
the uniformly constructible approximations to the given lattice. One can
then prove the set theoretic fusion lemma to preserve $\omega _{1}$ much as
for binary trees.

This approach would work for the hyperdegrees as well but would be
restricted to (dual) algebraic hyperarithmetical lattices and so, of course,
it would not suffice to embed all the distributive lattices. Now for the
constructibility degrees some such restrictions were necessary and not all
countable distributive lattices (or even linear orders) are isomorphic to
initial segments of $\mathcal{D}_{c}$. This analogy makes it seem less
likely that Sacks's conjecture about the initial segments of $\mathcal{D}%
_{h} $ is true especially since all the available techniques seem to be
quite similar.

One thus turns to finding counterexamples. The crucial fact used both in the
first examples of distributive lattices not isomorphic to initial segments
of $\mathcal{D}_{c}$ and in the later proofs of the necessity of
completeness was that $\leq _{c}$ is itself a constructible relation. For
the Turing degrees, $\leq _{T}$ is far from recursive: it is only a $\Sigma
_{3}^{0}$ relation. The hyperdegrees lie in between (in this sense as well)
as $\leq _{h}$ is a $\Pi _{1}^{1}$ relation and so analogous to $\Sigma
_{1}^{0}$ or r.e. ones in the setting of the Turing degrees. ($A\leq _{h}B$
if and only if both $A$ and its complement are $\Pi _{1}^{1}$ in $B$.) Thus
the construction of even the basic counterexamples requires more delicacy.
We here use the methods of finitely generated successor models introduced in
Shore [1981] and used in the setting of $\mathcal{D}_{h}$ in Shore [2007],
[2008] to prove Slaman and Woodin's result that $\mathcal{D}_{h}$ is rigid
and biinterpretable with second order arithmetic by purely degree theoretic
arguments that work locally (in jump ideals) rather than just in $\mathcal{D}%
_{h}$ as a whole.

As every initial segment of $\mathcal{D}_{h}$ (or any of our degree
structures) has a least element we assume that all (upper or lower
semi)lattices have a least element $0$. As we only consider countable
lattices, we also consider only (upper or lower semi)lattices with a
greatest element, $1$, as well. We think of both $0$ and $1$ as named by
constants in the language and so are preserved under substructures and
extensions, even as upper or lower semilattices.

\begin{theorem}
\label{ctrex}Not every countable lattice is isomorphic to an initial segment
of the hyperdegrees.
\end{theorem}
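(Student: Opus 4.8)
The plan is to build a specific countable lattice that cannot be an initial segment of $\mathcal{D}_h$ by exploiting the fact, emphasized in the introduction, that $\leq_h$ is a $\Pi_1^1$ relation. The strategy is one of coding: I would construct a countable lattice $\mathcal{L}$ whose internal structure forces any initial-segment embedding into $\mathcal{D}_h$ to compute a set that is too complex to be realized by hyperdegrees bounded by the generating degrees. Concretely, I would look for a lattice built around a successor-type configuration, following the finitely generated successor models of Shore [1981] and [2007], [2008] alluded to just before the statement, since those are precisely the tools the authors flag as needed for the counterexamples.

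\medskip

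First I would fix a non-hyperarithmetic set, naturally Kleene's $\mathcal{O}$ or some $\Pi_1^1$-complete object, and design $\mathcal{L}$ so that its order relations encode membership in this set. The idea is that within $\mathcal{L}$ one can define, by a first-order (lattice-theoretic) condition, a sequence of elements whose comparabilities track the characteristic function of the coded set. Because $\leq_h$ is only $\Pi_1^1$ (equivalently, both $A\leq_h B$ and its complement are $\Pi_1^1$ in $B$, as noted in the excerpt), an initial segment $\{x : x \leq_h \mathbf{d}\}$ of the hyperdegrees inherits a definability ceiling: the full order type below any fixed hyperdegree cannot faithfully reproduce an arbitrarily complex coded set. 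I would then argue that if $\mathcal{L}$ were isomorphic to such an initial segment, the coding would let us recover the non-hyperarithmetic set from hyperarithmetic data, contradicting its complexity.

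\medskip

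The key steps, in order, are: (1) choose the coded object and the precise finitely generated successor configuration so that the relevant relation (e.g.\ ``$a_n$ is the immediate successor of $a_{n-1}$ in $\mathcal{L}$'') is lattice-theoretically definable; (2) verify $\mathcal{L}$ is a genuine countable lattice (both joins and meets exist) — this is a routine but necessary closure check; (3) show that if $\mathcal{L} \cong \{x : x \leq_h \mathbf{d}\}$ then the definable successor sequence yields a set computable (or hyperarithmetic) in $\mathbf{d}$; and (4) compute the actual complexity of that set inside $\mathcal{D}_h$ and show it exceeds what $\Pi_1^1$ reducibility can bound below $\mathbf{d}$, forcing the contradiction.

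\medskip

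The main obstacle I expect is step (4): calibrating the definability of the coding inside $\mathcal{D}_h$ against the $\Pi_1^1$ nature of $\leq_h$. Unlike the constructibility degrees, where $\leq_c$ is itself constructible and the coding argument is nearly immediate, here $\leq_h$ sits ``one level up'' (analogous to r.e.\ relations in the Turing setting), so the naive coding need not go through — the very facts that make $\mathcal{D}_h$ more flexible than $\mathcal{D}_c$ (and that make Sacks's positive conjecture provable) also make the negative result harder to arrange. The delicate point will be designing the lattice so that the $\Pi_1^1$-definable order relation is forced to decide a $\Pi_1^1$-complete question uniformly, which is exactly where the finitely generated successor model machinery, working locally in jump ideals rather than in all of $\mathcal{D}_h$, must do the heavy lifting.
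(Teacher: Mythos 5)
There is a genuine gap at your steps (3)--(4), and it is exactly the step where the whole argument must deliver the contradiction. Your plan extracts from a hypothetical isomorphism $\mathcal{L}\cong \{x:x\leq _{h}\mathbf{d}\}$ only that the coded set $X$ is hyperarithmetic in $\mathbf{d}$, and then proposes to show this ``exceeds what $\Pi _{1}^{1}$ reducibility can bound below $\mathbf{d}$.'' But no contradiction can be manufactured this way: $\mathbf{d}$ is the top of the initial segment and is completely unconstrained --- it may lie above $\deg _{h}(\mathcal{O})$ or above any fixed degree whatsoever, so $X\leq _{h}\mathbf{d}$ is compatible with every choice of $X$. There is no ``definability ceiling'' on the top of an initial segment. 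The paper's proof succeeds because the coding of Shore [2007], [2008] is stronger than what you assume: for \emph{any} embedding $f$ of the lattice $\mathcal{L}_{X}$ into $\mathcal{D}_{h}$ one gets $X\leq _{h}f(0_{\mathcal{L}_{X}})$ --- the set is recovered below the image of the \emph{bottom} of the lattice (this is the ``additional trick'' the paper alludes to), and an initial-segment embedding sends $0_{\mathcal{L}_{X}}$ to $\mathbf{0}$, forcing $X$ to be hyperarithmetic outright, contradicting $X=\mathcal{O}$. Alternatively, a top-decoding can be salvaged, but only with a further idea you do not supply: take $X$ to be the complement of $\mathcal{O}$, so that recovering $X$ as $\Pi _{1}^{1}(G)$ (with $G$ of degree $\mathbf{d}$) yields $\mathcal{O}\leq _{h}G$; then, since the degrees below $\mathbf{d}$ form a lattice by hypothesis, the degrees below $\deg _{h}(\mathcal{O})$ would also form a lattice, which is known to be false.

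A secondary but real problem is your step (1). You propose to decode via relations like ``$a_{n}$ is the immediate successor of $a_{n-1}$,'' but such relations involve negative or universal conditions, and in $\mathcal{D}_{h}$ only \emph{positive} facts about $\leq _{h}$ and $\vee $ are $\Pi _{1}^{1}$ relative to the top (nonorder is $\Sigma _{1}^{1}$, and meet is not effective on indices). This is why the paper insists that the recovery procedure be positively $\Sigma _{1}^{0}$ in the order and join alone, and it is the precise sense in which the $\mathcal{D}_{c}$-style codings do not transfer. Relatedly, to conclude $X\leq _{h}$ anything you must recover \emph{both} $X$ and its complement as $\Pi _{1}^{1}$; the coding lattices carry separate parameters for the two halves of the sequence for exactly this reason. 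Your outline correctly identifies where the delicacy lies but provides no mechanism for either the positivity of the decoding formulas or the two-sided recovery, and its final contradiction, as written, is vacuous.
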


\begin{proof}
Shore [2007, \S 2] and [2008, \S 2-3] present a method for taking any set $X$
and constructing a lattice $\mathcal{L}_{X}$ such that if $f$ is an
embedding of $\mathcal{L}_{X}$ into $\mathcal{D}_{h}$ then $X\leq _{h}f(0_{%
\mathcal{L}_{X}})$. Thus if we take $X$ to be, for example, Kleene's $%
\mathcal{O}$ or any nonhyperarithmetic set, any embedding $f$ of $\mathcal{L}%
_{X}$ as an initial segment of $\mathcal{D}_{h}$ would contradict the
assumption that $X$ is not hyperarithmetic.
\end{proof}

We defer a more detailed explanation of this coding method to \S \ref{quest}
where we need an elaboration for a finer result. For now it suffices to
state that the lattices constructed contain finitely many elements which
generate (using $\vee $ and $\wedge $) a sequence of incomparable elements
of type $\omega $ and other parameters that define the subsets of this
sequence corresponding to $X$ and its complement. Moreover, the recovery
procedure producing $X$ (and its complement) is positively $\Sigma _{1}^{0}$
in the partial order relation and join operator of the lattice and so $\Pi
_{1}^{1}$ (and so $\Delta _{1}^{1}$) in (by an additional trick) the bottom
(and not just top) set of any embedding. Thus these lattices have two
seemingly crucial properties that would lead to our contradiction if they
could be realized as initial segments of $\mathcal{D}_{h}$. The first is
that the lattice is not hyperarithmetic. This is analogous to the first
examples in $\mathcal{D}_{c}$ except that the coding is more delicate and
survives $\leq _{h}$ being only $\Pi _{1}^{1}$ rather than hyperarithmetic.
On the other hand, this same difference makes the direct coding methods into
distributive lattices used for $\mathcal{D}_{c}$ unavailable in $\mathcal{D}%
_{h}$. The second aspect of the argument is that the coding (of
nonhyperarithmetic information) used here seems to rely on the fact that it
is finitely generated as it uses the generators as parameters in the
decoding. This type of coding cannot be carried out in distributive lattices
as they are all \emph{locally finite lattices}: every finite subset
generates a finite sublattice. This obstacle to constructing counterexamples
then revives the possibility of verifying Sacks's conjecture for
distributive lattices.

In fact, we prove that both of these properties (nonhyperarithmetic and not
locally finite) are necessary for a countable lattice not to be isomorphic
to an initial segment of $\mathcal{D}_{h}$. Perhaps surprisingly, we provide
a common generalization of a lattice being either hyperarithmetic or locally
finite.

\begin{theorem}
\label{main}Every sublattice $\mathcal{K}$ of any hyperarithmetic lattice $%
\mathcal{L}$ is isomorphic to an initial segment of the hyperdegrees. In
fact, it can be realized as an initial segment with top a hyperdegree below
that of $\mathcal{O}\oplus \mathcal{K}$.
\end{theorem}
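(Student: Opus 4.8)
The plan is to prove the theorem by a perfect-forcing construction over a suitable representation of $\mathcal{L}$, with the representation chosen so that the fusion machinery of Gandy and Sacks survives in the hyperarithmetic setting. First I would fix a representation theorem: since $\mathcal{L}$ is hyperarithmetic I would produce a hyperarithmetic ``lattice table'', i.e.\ a hyperarithmetic set $\Theta$ together with a uniformly hyperarithmetic family of equivalence relations $\{\equiv_a\}_{a\in\mathcal{L}}$, ordered by refinement to match $\mathcal{L}$, whose quotient behaviour mirrors $\mathcal{L}$ and which is homogeneous enough that all conditions share a single combinatorial shape. The essential point, flagged in the introduction, is precisely this uniformity: in the earlier infinite-lattice constructions the conditions acquire more and more side restrictions as successive elements are treated, so that fusing them yields a tree that is no longer a condition. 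A one-shape representation of the whole (hyperarithmetic) $\mathcal{L}$ at once avoids this, and the sublattice $\mathcal{K}$ enters only through which relations $\equiv_a$ we read off the generic, so that $\mathcal{K}$ itself need not be hyperarithmetic.

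With the table in hand I would take as forcing conditions the hyperarithmetic perfect subtrees of $\Theta^{<\omega}$ respecting the table, ordered by inclusion, and let $G$ be the generic branch. For each $a\in\mathcal{K}$ let $G_a$ be the sequence of $\equiv_a$-classes of $G$ and set $d_a=\deg_h(G_a)$; the map $a\mapsto d_a$ is to be the required embedding. Monotonicity ($a\leq b\Rightarrow d_a\leq_h d_b$) and $d_{a\vee b}=d_a\oplus d_b$ are read off directly from the refinement structure of the table, while $a\not\leq b\Rightarrow d_a\not\leq_h d_b$ and the meet law $d_{a\wedge b}=d_a\wedge d_b$ follow from a splitting/non-splitting dichotomy: given a condition $T$ and a reduction $\Phi$, either $T$ has a subtree forcing the value in question to be computed already from the lower set $G_{a\wedge b}$, or $T$ carries enough $\Phi$-splittings to recover information forbidden by the table, and the homogeneity of the representation is arranged so that only the permitted alternative can be forced.

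The same dichotomy, applied to an arbitrary $A\leq_h G$ presented by a reduction $\Phi$, gives the initial-segment property: by genericity one alternative is forced on a subtree, and uniformity of the table guarantees that the resulting ``splitting pattern'' names a single lattice element $a$ with $A\equiv_h G_a$, so that every hyperdegree below the top is some $d_a$. Throughout, the $\Pi_1^1$ (rather than recursive) character of $\leq_h$ demands more care than in $\mathcal{D}_c$: the reductions, the splitting subtrees, and the decision of which alternative is forced must all be kept $\Delta_1^1$ in the relevant parameters, using the theory of $\Pi_1^1$ sets. Tracking these bounds, the whole construction can be carried out $\Delta_1^1$ in $\mathcal{O}\oplus\mathcal{K}$ — with $\mathcal{O}$ supplying the hyperarithmetic table for $\mathcal{L}$ and $\mathcal{K}$ selecting which relations $\equiv_a$ to read off — which yields the stated bound that the top degree lies below $\mathcal{O}\oplus\mathcal{K}$.

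I expect the main obstacle to be the fusion lemma and the consequent preservation of $\omega_1^{CK}$. One must show that a countable sequence of conditions chosen to meet all the requirements above can be fused to a single condition, and that the branch $G$ through the fusion satisfies $\omega_1^G=\omega_1^{CK}$, equivalently that $\Delta_1^1$-comprehension holds in the generic extension. This is exactly the step the uniform, single-shape representation is designed to make possible, and also exactly where, as the introduction notes, the set-theoretic analogue breaks down. I would model the argument on the Gandy--Sacks fusion for a single minimal hyperdegree, folding the extra requirements into the uniform branching of the table, and the bulk of the technical work will be verifying that fusing infinitely many requirements destroys neither perfectness of the tree nor the $\Delta_1^1$ bounds needed to keep $\omega_1^{CK}$ fixed.
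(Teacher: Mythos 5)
The central gap is your treatment of the sublattice $\mathcal{K}$. You build one uniform table for all of $\mathcal{L}$, take as conditions the hyperarithmetic trees ``respecting the table,'' and let $\mathcal{K}$ enter ``only through which relations $\equiv_a$ we read off the generic.'' This cannot work: for \emph{every} $a\in\mathcal{L}$, not just $a\in\mathcal{K}$, the sequence $G_a$ of $\equiv_a$-classes of $G$ is hyperarithmetic in $G$ whether or not you choose to read it off. So if your conditions preserve all the congruences of the table, the hyperdegrees below $\deg_h(G)$ realize all of $\mathcal{L}$, and your construction proves only the special case $\mathcal{K}=\mathcal{L}$ of the theorem. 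The full statement --- where $\mathcal{K}$ is an arbitrary, typically non-hyperarithmetic, sublattice, which is exactly what yields the locally finite and hence all countable distributive lattices --- requires the construction to \emph{destroy} the congruences $\equiv_a$ for $a\in\mathcal{L}\setminus\mathcal{K}$ so as to collapse the degrees of those $G_a$ onto the image of $\mathcal{K}$, while preserving the $\mathcal{K}$-congruences. Since $\mathcal{K}$ is not effectively given, this forces conditions to carry a second component, a finite slsl $\mathcal{\hat{K}}\subseteq\mathcal{K}$ recording which congruences refinements must respect --- and then you are back in precisely the fusion trap your first paragraph describes: the congruence requirements vary from condition to condition, so fusing infinitely many of them need not produce a condition. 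Your ``one-shape table'' removes the varying tree shapes but not the varying congruence constraints.

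What rescues the argument in the paper, and is missing from your proposal, is twofold. First, Theorem~\ref{repthm}: a nested sequence of representations $\Theta_i\upharpoonright\mathcal{L}_i$, recursive in $\mathcal{L}$, equipped \emph{in advance} with meet interpolants and with homogeneity interpolants for every finite slsl of each $\mathcal{L}_i$, so that trees of a single shape can serve whatever finite $\mathcal{\hat{K}}$ a condition carries. Second, the forgetfulness property (Proposition~\ref{refine}): if some extension $\langle T_1,\mathcal{K}_1\rangle$ of $\langle T_0,\mathcal{K}_0\rangle$ forces a ranked formula, there is an extension with the same $k$ and the same second component $\mathcal{K}_0$ forcing it; this is what lets the fusion keep both parameters fixed, and ``modeling the argument on Gandy--Sacks'' does not supply it. Relatedly, your splitting dichotomy needs the interpolants, not mere ``homogeneity'': the meet interpolants show the set of non-splitting congruences is closed under meet, so the splitting pattern names a single element $z$, which lies in $\mathcal{\hat{K}}\subseteq\mathcal{K}$ exactly because only $\mathcal{K}$-congruences were preserved, and the homogeneity interpolants are what permit building the $\delta$-splitting trees while respecting $\mathcal{\hat{K}}$. (Your complexity accounting is also slightly off: the table is recursive in $\mathcal{L}$, hence hyperarithmetic outright; $\mathcal{O}$ is needed because forcing for ranked sentences is $\Pi_1^1$, and $\mathcal{K}$ enters because quantifying over conditions quantifies over their slsl components.)
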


To see that this is indeed the desired common generalization we need to show
that every countable locally finite lattice is isomorphic to a sublattice of
a hyperarithmetic lattice.

\begin{proposition}
There is a recursive, universal, locally finite lattice, i.e.\ a recursive,
locally finite lattice into which every countable, locally finite lattice
can be embedded.
\end{proposition}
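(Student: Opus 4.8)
The plan is to realize the desired lattice $\mathcal{U}$ as the Fra\"iss\'e limit of the class $\mathcal{F}$ of all finite lattices (with the named constants $0,1$) and then to observe that the construction can be carried out effectively. First I would check that $\mathcal{F}$ is a Fra\"iss\'e class: it is closed under sublattices (the hereditary property), has the joint embedding property, contains only countably many isomorphism types, and---the one substantial point---has the amalgamation property with \emph{finite} amalgams, i.e.\ whenever $A\le B$ and $A\le C$ are finite lattices, there is a finite lattice $D$ with embeddings of $B$ and $C$ into $D$ agreeing on $A$. Granting this, let $\mathcal{U}=\bigcup_n L_n$ be the countable Fra\"iss\'e limit, built as an increasing union of finite sublattices obtained by repeatedly amalgamating under a standard bookkeeping, so that $\mathcal{U}$ has the extension property: for every finite sublattice $A\subseteq\mathcal{U}$ and every finite lattice extension $A\le B$, there is an embedding of $B$ into $\mathcal{U}$ fixing $A$ pointwise.

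Next I would verify the two defining features. Local finiteness is essentially automatic: since each $L_n$ is a \emph{sublattice} of $L_{n+1}$, the operations $\vee,\wedge$ computed in $L_n$ agree with those computed in $\mathcal{U}$, so for any finite $S\subseteq\mathcal{U}$, choosing $n$ with $S\subseteq L_n$ gives $\langle S\rangle_{\mathcal{U}}=\langle S\rangle_{L_n}$, which is finite. Universality follows from the extension property: any countable locally finite lattice $\mathcal{K}$ is the union of an increasing chain $A_0\le A_1\le\cdots$ of finite sublattices (take $A_n$ to be the sublattice generated by the first $n$ elements of an enumeration of $\mathcal{K}$, finite because $\mathcal{K}$ is locally finite), and one embeds $\mathcal{K}$ into $\mathcal{U}$ by defining embeddings $g_n\colon A_n\to\mathcal{U}$ with $g_{n+1}\restriction A_n=g_n$, each extension step being precisely an instance of the extension property.

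For recursiveness I would arrange the domain of each $L_n$ to be an initial segment of $\omega$ with the maps $L_n\hookrightarrow L_{n+1}$ being inclusions, so that $\mathcal{U}$ has domain $\omega$. Because the isomorphism types of finite lattices can be listed effectively and a finite amalgam is guaranteed to exist, at each stage the required amalgam can be located by an effective search; hence the sequence $(L_n)$ is uniformly computable. The relations and operations of $\mathcal{U}$ are then recursive: to decide $x\le y$ or to compute $x\vee y$ and $x\wedge y$, find any stage $n$ with $x,y\in L_n$ and read off the answer in $L_n$, which is correct and stable because every later $L_m$ extends $L_n$ as a sublattice.

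The main obstacle is the amalgamation step---specifically guaranteeing that two finite lattices can be amalgamated by a \emph{finite} lattice. The amalgamation property for the variety of all lattices is classical (J\'onsson), but that argument yields a possibly infinite amalgam, and the sublattice generated by the two factors inside it need not be finite, so a genuinely finite amalgamation result is required. I would obtain it by working inside finite partition lattices, invoking the Pudl\'ak--T\r{u}ma theorem that every finite lattice embeds into a finite partition lattice, to produce an explicit finite amalgam (equivalently, to verify that the recursive lattice of finitary equivalence relations on $\omega$ already has the required extension property and hence serves as $\mathcal{U}$). Once finite amalgamation is in hand, the remaining verifications are routine.
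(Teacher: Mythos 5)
Your overall architecture is exactly that of the paper: the paper likewise obtains the universal lattice as a Fra\"iss\'e limit (citing Hodges [1993, 7.1.2]) with the effectiveness handled by organizing the requirements, and it likewise isolates the amalgamation property for finite lattices (with $0$ and $1$) as the one substantial point, stated as a separate lemma. Your verifications of local finiteness, universality via the extension property, and recursiveness via effective search for amalgams are all correct and routine, as you say. The problem is the last step, where you propose to establish finite amalgamation.

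There is a genuine gap there. The Pudl\'ak--T\r{u}ma theorem gives you \emph{separate} embeddings of $\mathcal{B}_0$ and $\mathcal{B}_1$ into finite partition lattices, with no compatibility whatsoever over the common sublattice $\mathcal{A}$; it does not merge the two extensions, so it does not ``produce an explicit finite amalgam.'' To make partition lattices do this job you would need the extension property you parenthetically assert for the lattice of finitary equivalence relations on $\omega$: that every embedding of a finite lattice $\mathcal{A}$ into a finite partition lattice extends, along any finite extension $\mathcal{A}\leq\mathcal{B}$, to an embedding of $\mathcal{B}$ into a larger finite partition lattice compatibly with the given one. That statement is itself an amalgamation-type theorem strictly stronger than Pudl\'ak--T\r{u}ma, and you give no argument for it; invoking it is essentially circular, since it contains the finite amalgamation property you are trying to prove. (Your observation that the finitary equivalence relations on $\omega$ form a recursive locally finite lattice is correct, but its universality is exactly what is unproven.) Moreover, the heavy machinery is unnecessary: the paper proves the amalgamation lemma elementarily. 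Assume $f_0,f_1$ are inclusions with $\mathcal{B}_0\cap\mathcal{B}_1=\mathcal{A}$, form the finite partial lattice $\mathcal{P}$ on $\mathcal{B}_0\cup\mathcal{B}_1$ with $x<y$ for $x\in\mathcal{B}_i\setminus\mathcal{A}$, $y\in\mathcal{B}_{1-i}\setminus\mathcal{A}$ iff some $a\in\mathcal{A}$ has $x<_{\mathcal{B}_i}a$ and $a<_{\mathcal{B}_{1-i}}y$ (one checks this order preserves the joins and meets of each $\mathcal{B}_i$), and let $\mathcal{C}$ be the lattice of ideals of $\mathcal{P}$ (downward closed subsets closed under joins where defined), with each $p$ mapped to the principal ideal it generates; since $\mathcal{P}$ is finite, $\mathcal{C}$ is finite, and the principal-ideal map restricted to each $\mathcal{B}_i$ is a lattice embedding. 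Replacing your final paragraph with this construction closes the gap.
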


\begin{proof}
With the proper organization of the requirements, the standard Fraiss\'{e}
construction (as in Hodges [1993, 7.1.2] produces the desired lattice once
one has the amalgamation property for the class of finite lattices.
\end{proof}

\begin{lemma}
\label{amal}The class of finite lattices (with $0$ and $1$) has the
amalgamation property, i.e.\ if $\mathcal{A}$, $\mathcal{B}_{0}$ and $%
\mathcal{B}_{1}$ are finite lattices and $f_{0},f_{1}$ are embeddings of $%
\mathcal{A}$ into $\mathcal{B}_{0}$ and $\mathcal{B}_{1}$, respectively,
then there is a finite lattice $\mathcal{C}$ and embeddings $g_{0}$ and $%
g_{1}$ of $\mathcal{B}_{0}$ and $\mathcal{B}_{1}$, respectively, into $%
\mathcal{C}$ such that $g_{0}f_{0}\upharpoonright \mathcal{A}%
=g_{1}f_{1}\upharpoonright \mathcal{A}$.
\end{lemma}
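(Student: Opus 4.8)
The plan is to reduce to the case where $f_0$ and $f_1$ are inclusions, build a partial order on the set-theoretic union of $B_0$ and $B_1$ glued along $\mathcal{A}$, and then pass to its Dedekind--MacNeille completion to obtain the finite lattice $\mathcal{C}$. First, by renaming elements I may assume $f_0,f_1$ are inclusions and that $\mathcal{B}_0\cap\mathcal{B}_1=\mathcal{A}$, so that $\mathcal{A}$ is a common sublattice containing the shared constants $0$ and $1$. On $P=B_0\cup B_1$ I define a relation $\le$ extending the orders of $\mathcal{B}_0$ and $\mathcal{B}_1$: within each $B_i$ it is the given order, and for $u\in B_i\setminus A$ and $w\in B_{1-i}\setminus A$ I set $u\le w$ iff there is some $a\in A$ with $u\le_{B_i}a\le_{B_{1-i}}w$.

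The technical heart of the argument is checking that $(P,\le)$ is a partial order and that it preserves the operations of $\mathcal{B}_0$ and $\mathcal{B}_1$. Transitivity across the seam, e.g.\ for $u\in B_0$, $v\in B_1$, $w\in B_0$, reduces to the observation that the mediating elements of $A$ compose: from $u\le_{B_0}a\le_{B_1}v$ and $v\le_{B_1}a'\le_{B_0}w$ one gets $a\le_{B_1}a'$, hence $a\le_{B_0}a'$ since the order on $A$ is the common restriction, so $u\le_{B_0}w$. Antisymmetry similarly forces any pair of elements comparable both ways across the seam to lie in $A$ and to coincide.

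To see the operations are preserved, fix $x,y\in B_0$ and suppose $b_1\in B_1$ bounds $\{x,y\}$ from above in $P$; then there are $a,a'\in A$ with $x\le a\le b_1$ and $y\le a'\le b_1$, and since $\mathcal{A}$ is closed under join, $a\vee a'\in A$ satisfies $x\vee_{B_0}y\le a\vee a'\le b_1$. Thus $x\vee_{B_0}y$ remains the least upper bound of $\{x,y\}$ in $P$; the dual argument, using closure of $\mathcal{A}$ under meet, handles infima. Here the full \emph{sublattice} hypothesis (closure under $\vee$ and $\wedge$), and not merely that $\mathcal{A}$ is a common sub-poset, is exactly what is needed.

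Finally I take $\mathcal{C}$ to be the Dedekind--MacNeille completion of $(P,\le)$, which is finite since $P$ is, and let $g_0,g_1$ be the composites $B_0\hookrightarrow P\hookrightarrow\mathcal{C}$ and $B_1\hookrightarrow P\hookrightarrow\mathcal{C}$. Since the completion embedding preserves all meets and joins that already exist in $P$, and these agree with the operations of $\mathcal{B}_0$ and $\mathcal{B}_1$ by the previous step, $g_0$ and $g_1$ are lattice embeddings; as $0$ and $1$ lie in $A$ and are the bottom and top of $P$, they are preserved as well. The two maps agree on $\mathcal{A}$ because both restrict to the common inclusion of $A$ into $\mathcal{C}$, giving $g_0f_0\upharpoonright\mathcal{A}=g_1f_1\upharpoonright\mathcal{A}$. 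I expect the main obstacle to be entirely in the middle steps---establishing transitivity and operation-preservation for the glued order---while the passage to the completion is routine.
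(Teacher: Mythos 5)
Your proposal is correct and matches the paper's proof in all essentials: the same reduction to inclusions with $\mathcal{B}_{0}\cap \mathcal{B}_{1}=\mathcal{A}$, the same glued order with cross-seam comparabilities mediated by elements of $\mathcal{A}$, and the same verification that joins and meets from each $\mathcal{B}_{i}$ remain least upper and greatest lower bounds (using closure of $\mathcal{A}$ under $\vee$ and $\wedge$), followed by completing the finite glued poset. The only divergence is the final step, where you invoke the Dedekind--MacNeille completion while the paper instead takes $\mathcal{C}$ to be the lattice of ideals of the glued partial lattice (downward closed sets closed under existing joins) with the principal-ideal embedding; both completions preserve all meets and joins existing in the glued poset, so either choice finishes the proof.
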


\begin{proof}
This should be a known fact but we have not found a reference to the precise
form of the amalgamation property that we need. We supply a proof at the end
of \S \ref{repsec}.
\end{proof}

As we mentioned above, every distributive lattice is locally finite (this is
well known and follows easily from the Stone Representation Theorem (see
e.g. Gr\"{a}tzer [2003, p. 85]) that it is isomorphic to a ring of sets).
Thus we have our answer to the original question of Thomason [1970] and
Sacks [1972].

\begin{corollary}
Every locally finite and so, in particular, every countable distributive
lattice is isomorphic to an initial segment of the hyperdegrees.
\end{corollary}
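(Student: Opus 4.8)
The plan is to chain together the three facts recorded just above the statement: the reduction of distributivity to local finiteness, the existence of a recursive universal locally finite lattice, and the positive embedding result of Theorem \ref{main}. Since Theorem \ref{main} already delivers every sublattice of a hyperarithmetic lattice as an initial segment of $\mathcal{D}_{h}$, the only work left is to exhibit an arbitrary countable locally finite (in particular, distributive) lattice as such a sublattice.

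First I would dispatch the distributive case, reducing the second assertion of the corollary to the first. By the Stone Representation Theorem (Gr\"{a}tzer [2003, p.\ 85]) any distributive lattice is isomorphic to a ring of sets, and a finitely generated distributive lattice is a homomorphic image of the free distributive lattice on finitely many generators, which is finite. Hence every finite subset of a distributive lattice generates a finite sublattice; that is, every countable distributive lattice is locally finite.

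Next, given a countable locally finite lattice $\mathcal{K}$, I would invoke the preceding Proposition to fix a recursive, universal, locally finite lattice $\mathcal{U}$ together with a lattice embedding $e\colon \mathcal{K}\to\mathcal{U}$. Its image $e(\mathcal{K})$ is then a sublattice of $\mathcal{U}$ with $\mathcal{K}\cong e(\mathcal{K})$. Since $\mathcal{U}$ is recursive it is a fortiori hyperarithmetic, so $e(\mathcal{K})$ is a sublattice of a hyperarithmetic lattice, and Theorem \ref{main} yields that $e(\mathcal{K})$, and hence $\mathcal{K}$, is isomorphic to an initial segment of $\mathcal{D}_{h}$.

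The single point that demands any care, and the nearest thing to an obstacle in an otherwise purely bookkeeping argument, is matching the exact notion of \emph{sublattice}: the embedding $e$ supplied by the universal lattice must preserve both $\wedge$ and $\vee$ as well as the named constants $0$ and $1$, so that $e(\mathcal{K})$ genuinely satisfies the hypothesis of Theorem \ref{main}. Preservation of $0,1$ is guaranteed by building them into the Fraiss\'{e} construction (via the amalgamation property for finite lattices with $0$ and $1$ of Lemma \ref{amal}) and by our standing convention that all our lattices carry $0,1$ as constants preserved under substructures. All the genuine difficulty is thereby absorbed into Theorem \ref{main} and the universal-lattice Proposition, and the corollary follows at once.
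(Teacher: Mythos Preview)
Your proposal is correct and follows exactly the route the paper intends: the corollary is not given a separate proof in the paper but is stated as immediate from the preceding Proposition (the recursive universal locally finite lattice), Theorem \ref{main}, and the remark that distributive implies locally finite via Stone. Your write-up simply makes this chain explicit, including the check that the Fra\"{\i}ss\'{e} embedding preserves $0$ and $1$ so that Theorem \ref{main} applies.
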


These results also allow us to establish the same fine line between
decidability and undecidability in the fragments of the theory of $\mathcal{D%
}_{h}$ as one has for $\mathcal{D}_{T}$.

\begin{theorem}
The two quantifier theory of $\mathcal{D}_{h}$ is decidable while the three
quantifier theory is undecidable as is the two quantifier theory in the
language with both $\vee $ and $\wedge $ where $\wedge $ denotes any total
extension of the infimum relation.
\end{theorem}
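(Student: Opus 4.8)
The plan is to observe that all three assertions are the exact hyperdegree analogs of theorems known for $\mathcal{D}_T$, and that each of the original Turing-degree arguments is sensitive only to two degree-theoretic inputs: (i) which finite or recursive lattices can be realized as initial segments, and (ii) the Kleene--Post style extension-of-embeddings facts. Both inputs are now available for $\mathcal{D}_h$: Theorem~\ref{main} (with its Corollary) supplies all finite and, more generally, all recursive lattices as initial segments, while Feferman's [1965] Cohen forcing in the ramified language, which preserves $\omega_1^{CK}$, supplies the embedding and extension facts just as finite-approximation forcing does for $\mathcal{D}_T$. So the strategy is to run each Turing-degree argument, substituting these inputs and checking that nothing else about the particular structure is used.

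For the decidability of the two quantifier ($\forall\exists$) theory I would follow Shore [1978] and Lerman [1983, VII.4]. There a $\forall\exists$ sentence over $\langle \mathcal{D}_T,\leq_T\rangle$ is shown to be true if and only if an effectively checkable combinatorial condition on finite partial orders and their lattice extensions holds. The soundness of this criterion uses exactly (i) the realization of finite lattices as initial segments, to pin down the universally quantified configuration so that the existential witnesses are forced into a controlled position, and (ii) extension of embeddings, to produce those witnesses. Since both hold in $\mathcal{D}_h$ with the same parameters, the same criterion characterizes the truth of $\forall\exists$ sentences in $\mathcal{D}_h$, and as the criterion is decidable the two quantifier theory is decidable.

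For the undecidability of the three quantifier theory I would follow Schmerl's argument (see Lerman [1983]). The key point is that every finite lattice embeds as an initial segment (Theorem~\ref{main}) and that the predicate ``the degrees $\leq a$ are isomorphic to the fixed finite lattice $\mathcal{L}$'' is expressible with quantifiers ranging below $a$; quantifying over the top $a$ then yields an interpretation of the undecidable first-order theory of finite lattices into the three quantifier theory of $\mathcal{D}_h$. Tracking the quantifiers to confirm that three suffice, exactly as in the Turing case, gives the undecidability. The third assertion is handled analogously using Miller, Nies and Shore [2004]: every countable recursive lattice is hyperarithmetic, hence by Theorem~\ref{main} is realized as an initial segment, and inside such an initial segment $\vee$ is the lattice join while any total extension $\wedge$ of the infimum relation must coincide with the lattice meet there, since infima exist within the downward-closed initial segment. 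Their encoding of an undecidable problem into the $\forall\exists$ theory in the language with both $\vee$ and $\wedge$ therefore transfers to $\mathcal{D}_h$.

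The main obstacle is not the new initial segment machinery, which Theorem~\ref{main} already provides, but verifying that the existential side of each argument goes through in the hyperarithmetic setting: one must confirm that Feferman's Cohen forcing yields all the extension-of-embeddings witnesses (joins, incomparable degrees avoiding prescribed cones, exact pairs) used in the Turing-degree proofs while preserving $\omega_1^{CK}$, and one must check that the interpretations of the finite and recursive lattices respect the quantifier bounds so that the quantifier counts claimed (three, and two with both operations) are genuinely met. Once these bookkeeping points are settled, the three results follow from the cited arguments with only notational changes.
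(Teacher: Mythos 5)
Your proposal is correct and follows essentially the same route as the paper: reduce all three assertions to the known Turing-degree arguments (Shore [1978]/Lerman [1983, VII.4] for decidability, Schmerl for the three quantifier undecidability, Miller--Nies--Shore for the language with $\vee$ and $\wedge$), supplying the two required inputs via Theorem~\ref{main} (finite and recursive lattices as initial segments) and Cohen forcing in the hyperarithmetic setting relativizing Feferman [1965] for the Kleene--Post extension-of-embeddings facts. The paper's proof is exactly this citation-and-verification argument, so no further comparison is needed.
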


\begin{proof}
The proofs for the first two assertions in $\mathcal{D}_{T}$ (see Lerman
[1983 VII.4, II.4.11 and A.2.9] although some minor corrections are needed)
rely only on two facts. First, the structure is an usl with $0$ in which all
finite lattices can be embedded as initial segments. Second, it requires a
Kleene-Post type extension of embedding theorem: Given any finite usl $P$
extended as a partial order by a finite $Q$ and any embedding of $P$ into $%
\mathcal{D}_{T}$ there is an extension of the embedding to one of $Q$ as
long as $Q$ respects the usl structure of $P$ and has no new elements below
any of those in $P$. Our results supply the first fact. Cohen forcing in the
hyperarithmetic setting supplies the second. (The required extension of the
given embedding can easily be constructed from a set of hyperdegrees
independent over all the given degrees in the image of $P$ as in the
standard construction of an embedding of an arbitrary partial order. Such
independent degrees are given by the columns of a Cohen generic set (in the
hyperarithmetic sense) by relativizing Feferman [1965] to the top given
degree.) The final undecidability result needs only that all recursive
lattices are isomorphic to initial segments as in Miller, Nies and Shore
[2003, Cor. 3.2].
\end{proof}

The crucial aspect of our version of perfect trees (and associated forcing
notion) that allows us to prove the required fusion lemma is a type of
forgetfulness property (Proposition \ref{refine}). We will use pairs
consisting of trees $T$ and a finite sublowersemilattice (\emph{slsl}) $%
\mathcal{\hat{K}}$ of $\mathcal{L}$. The shape of the tree is determined in
advance by a type of sequential usl representation for $\mathcal{L}$ and the
second component indicates the congruence relations that must be respected
in all further refinements. (All these notions are made precise in \S \ref%
{uslrep}.) What goes wrong in a naive attempt to transfer the methods of
either $\mathcal{D}_{T}$ or $\mathcal{D}_{c}$ is the variation in the
congruences that must be respected. Our forcing will have the property that
if some condition forces a sentence of a certain form then the second
component is irrelevant and so can be made the same in all the extensions
needed in the fusion lemma. While this may seem unlikely, it is possible
once one has proven a new lattice representation theorem (Theorem \ref%
{repthm}). Its construction takes into account various requirements for all
sublattices at every step. The full lattice structure is exploited here by
using a slsl decomposition of $\mathcal{L}$ and usl representations for the
slsls. (The slsls are actually lattices as they are finite although with
elements having possibly different joins than in all of $\mathcal{L}$.)

While all of this could have been done in the setting of $\mathcal{D}_{T}$
(and actually gives some simplified proofs for various cases) no new
theorems, of course, can be proven this way. On the other hand, given the
known counterexamples in $\mathcal{D}_{c}$, it is clear that the methods
cannot carry over to that setting. Surprisingly, all the recursion theoretic
arguments can be carried over and all the uses of fusion other than to
preserve $\omega _{1}^{CK}$ can be avoided. (Those used to show that
deciding each sentence is dense are, of course, unnecessary. Those to
convert reductions to ones resembling Turing ones can be replaced by local
Cohen forcing inside the tree parts of the conditions.) Thus it must be that
the forcing notion in the set theoretic case analogous to ours, but using
constructible trees instead of hyperarithmetic ones, fails to preserve $%
\omega _{1}$ despite the fact that ours preserves $\omega _{1}^{CK}$.

Before beginning the detailed proofs of our results, we also want to make a
comment on the coding methods used. While early results using codings in
degree structures in general (and later ones for $\mathcal{D}_{c}$) used
distributive lattices, later stronger ones have all used nondistributive
ones. It turns out that we now have a formal proof of the necessity of
moving to nondistributive lattices to get the best results. While one can
code arbitrary sets into distributive lattices (e.g. by the venerable lines
and diamonds method) the decoding takes a number of quantifiers. In degree
structures, however, while join is an effective operation (on indices) meet
is not and requires extra quantifiers. Thus to get the best possible results
one wants a coding for which the decoding is r.e. in only $\leq $ and $\vee $
(with only positive occurrences of $\leq $ as negative ones also add to the
complexity). This is the type of coding used in the many applications of
effective successor models from Shore [1981] to [2008]. As explicitly stated
in Shore [2007] and [2008] one has a recursive sequence $\phi _{n}$ of
positive $\Sigma _{1}$ formulas in $\leq $ and $\vee $ such that given any
set $X$ there is a countable lattice $\mathcal{L}_{X}$ (even one recursive
in $X$) such that $n\in X\Leftrightarrow \mathcal{L}_{X}\models \phi _{n}$.
Our results show that there is no such coding procedure in distributive
lattices. Indeed, there is an $X$ (actually the complement of $\mathcal{O}$)
such that there is no hyperarithmetic sequence $\phi _{n}$ of positive $%
\Sigma _{1}$ formulas in $\leq $ and $\vee $ and no distributive (or even
locally finite) countable lattice $\mathcal{L}_{X}$ such that $n\in
X\Leftrightarrow \mathcal{L}_{X}\models \phi _{n}$. To see this, suppose
there were such an $\mathcal{L}$. Take an embedding of $\mathcal{L}$ as an
initial segment of $\mathcal{D}_{h}$ with top the degree of some $G$. As $%
\leq _{h}$ and $\vee $ are $\Pi _{1}^{1}(G)$ on (the indices of) the initial
segment of $\mathcal{D}_{h}$ below $\deg _{h}(G)$, $X$, the complement of $%
\mathcal{O}$ would be $\Pi _{1}^{1}$ in $G$ and so $\mathcal{O}$ would be
hyperarithmetic in $G$ but the hyperdegrees below $\mathcal{O}$ do not form
a lattice. On the other hand, our coding in Proposition \ref{uslcode} shows
that there is such a recursive sequence $\phi _{n}$ such that given any $X$
we can find an usl $\mathcal{K}_{X}$ which is a susl of a locally finite
lattice $\mathcal{L}_{X}$ (each recursive in $X$) such that $n\in
X\Leftrightarrow \mathcal{K}_{X}\models \phi _{n}$. In a slightly different
vein, it seems that using the coding in distributive lattices found in
Selivanov [1988] one can code arbitrary sets into distributive lattices in
this way if one gives up the requirement that the sentences $\phi _{n}$ are
positive. (Of course, this would increase the absolute complexity of the
decoding procedure for $\mathcal{D}_{T}$ or $\mathcal{D}_{h}$.)

Our plan now is to present the definitions of usl representations and our
forcing notion in \S \ref{uslrep}. We also prove some basic facts and state
the representation theorem we need. The actual proof of the theorem we need
is given in \S \ref{repsec}. The crucial fusion lemma is proven in \S \ref%
{fusionsec} while the lemmas needed for the construction of our generic
sequence and the verifications that it gives the desired initial segment of $%
\mathcal{D}_{h}$ are in \S \ref{versec}. Finally, we close in \S \ref{quest}
with a discussion of the situation for countable usl initial segments of $%
\mathcal{D}_{h}$ and various open questions.

\section{Usl representations and the notion of forcing\label{uslrep}}

We are given a hyperarithmetic lattice $\mathcal{L}$ and a sublattice $%
\mathcal{K}$ (both with the same $0$ and $1$). We begin our journey to the
required notion of forcing with the definition of an uppersemilattice (usl)
representation.

\begin{definition}
\label{uslrepdef}Let $\Theta $ be a set of maps from an usl $\mathcal{L}$
into $\mathbb{\omega }$. For $\alpha ,\beta \in \Theta $ and $x\in \mathcal{L%
}$, we write $\alpha \equiv _{x}\beta $ ($\alpha $ is congruent to $\beta $
modulo $x$) if $\alpha (x)=\beta (x)$. We write $\alpha \equiv _{x,y}\beta $
to indicate that $\alpha $ is congruent to $\beta $ modulo both $x$ and $y$
and generally use commas conjunctively in this way. Such a $\Theta $ is an 
\emph{usl representation} of $\mathcal{L}$ if it contains the function that
is $0$ on every input and for every $\alpha ,\beta \in \Theta $ and $%
x,y,z\in \mathcal{L}$ the following properties hold:

\begin{enumerate}
\item $\alpha (0)=0$.

\item (Differentiation) If $x\nleq y$ then there are $\gamma ,\delta \in
\Theta $ such that $\gamma \equiv _{y}\delta $ but $\gamma \not\equiv
_{x}\delta $.

\item (Order) If $x\leq y$ and $\alpha \equiv _{y}\beta $ then $\alpha
\equiv _{x}\beta $.

\item (Join) If $x\vee y=z$ and $\alpha \equiv _{x,y}\beta $ then $\alpha
\equiv _{z}\beta $.
\end{enumerate}
\end{definition}

\begin{notation}
\label{restriction}If $\Theta $ is an usl representation for $\mathcal{L}$
and $\mathcal{\hat{L}}\subseteq \mathcal{L}$ then we denote the \emph{%
restriction} of $\Theta $ to $\mathcal{\hat{L}}$ by $\Theta \upharpoonright 
\mathcal{\hat{L}}=\{\alpha \upharpoonright \mathcal{\hat{L}}|\alpha \in
\Theta \}$. We also say that $\Theta $ is an \emph{extension }of $\Theta
\upharpoonright \mathcal{\hat{L}}$.
\end{notation}

For those familiar with lattice representations as presented in lattice
theory we note that these representations (with the additional requirements
for infimum given below in Theorem \ref{repthm} (2)) essentially correspond
to ones of the dual lattice by equivalence relations. If the reader prefers
to think in terms of equivalence relations on a set then the set is $\Theta $%
, the relation that corresponds to $x\in \mathcal{L}$ is $\equiv _{x}$ and,
for $\alpha \in \Theta $, its $x$-equivalence class is $\alpha (x)$. A
further reduction making the set $\Theta $ into one of natural numbers can
be achieved by identifying the elements $\alpha $ of $\Theta $ with their
values at the $1$ of $\mathcal{L}$ (and so some natural number $\alpha (1)$)
as this uniquely determines all the values of $\alpha $ by the order
property. We find this last identification that views the elements of $%
\mathcal{L}$ and those of $\Theta $ as of the same type to be confusing. (It
is really the equivalence classes that correspond to the elements of $%
\mathcal{L}$.) We also find the functional notation convenient for the
construction of representations in \S 5.

\begin{definition}
\label{homo}If $\Theta ^{\prime }$ and $\Theta $ are usl representations for 
$\mathcal{L}^{\prime }$ and $\mathcal{L}$, respectively, $\mathcal{\hat{L}}%
\subseteq \mathcal{L}^{\prime }\subseteq \mathcal{L}$ and $f:\Theta ^{\prime
}\rightarrow \Theta $, then $f$ is an $\mathcal{\hat{L}}$-\emph{homomorphism}
if, for all $\alpha ,\beta \in \Theta ^{\prime }$ and $x\in \mathcal{\hat{L}}
$, $\alpha \equiv _{x}\beta \Rightarrow f(\alpha )\equiv _{x}f(\beta )$.
\end{definition}

In \S \ref{repsec} we will prove the existence of a special type of
representation for our given lattice $\mathcal{L}$ that we need to define
our forcing conditions.\medskip

\noindent \textbf{Theorem \ref{repthm}. }\emph{If }$\mathcal{L}$\emph{\ is a
countable lattice then there is an usl representation }$\Theta $ \emph{of} $%
\mathcal{L}$ \emph{along with a nested sequence of finite slsls }$\mathcal{L}%
_{i}$\emph{\ starting with }$\mathcal{L}_{0}=\{0,1\}$ \emph{with union }$%
\mathcal{L\ }$\emph{and a nested sequence of finite subsets }$\Theta _{i}$ 
\emph{with union }$\Theta $ \emph{with both sequences recursive in }$%
\mathcal{L}$\emph{\ with the following properties:}

\begin{enumerate}
\item \emph{For each }$i\emph{,}$ $\Theta _{i}\upharpoonright \mathcal{L}%
_{i} $ \emph{is an usl representation of }$\mathcal{L}_{i}$\emph{.}

\item \emph{There are }meet interpolants\emph{\ for }$\Theta _{i}$\emph{\ in 
}$\Theta _{i+1}$\emph{, i.e.\ if }$\alpha \equiv _{z}\beta $\emph{, }$%
x\wedge y=z$\emph{\ (with }$\alpha ,\beta \in \Theta _{i}$ \emph{and }$%
x,y,z\in \mathcal{L}_{i}\emph{)}$\emph{\ then there are }$\gamma _{0},\gamma
_{1},\gamma _{2}\in \Theta _{i+1}$\emph{\ such that }$\alpha \equiv
_{x}\gamma _{0}\equiv _{y}\gamma _{1}\equiv _{x}\gamma _{2}\equiv \beta $%
\emph{.}

\item \emph{For every sublowersemilattice }$\mathcal{\hat{L}}$ \emph{of} $%
\mathcal{L}_{i}$\emph{, }$\mathcal{\hat{L}}\subseteq _{lsl}\mathcal{L}_{i}%
\emph{,}$\emph{\ there are }homogeneity interpolants\emph{\ }for $\Theta
_{i} $\ with respect to $\mathcal{\hat{L}}$\emph{\ in }$\Theta _{i+1}$\emph{%
, i.e. for every }$\alpha _{0},\alpha _{1},\beta _{0},\beta _{1}\in \Theta
_{i} $\emph{\ such that }$\forall w\in \mathcal{\hat{L}}(\alpha _{0}\equiv
_{w}\alpha _{1}\rightarrow \beta _{0}\equiv _{w}\beta _{1})$\emph{, there
are }$\gamma _{0},\gamma _{1}\in \Theta _{i+1}$\emph{\ and }$\mathcal{\hat{L}%
}$\emph{-homomorphisms }$f,g,h:\Theta _{i}\rightarrow \Theta _{i+1}$\emph{\
such that }$f:\alpha _{0},\alpha _{1}\mapsto \beta _{0},\gamma _{1}$\emph{, }%
$g:\alpha _{0},\alpha _{1}\mapsto \gamma _{0},\gamma _{1}$\emph{\ and }$%
h:\alpha _{0},\alpha _{1}\mapsto \gamma _{0},\beta _{1}$\emph{, i.e. }$%
f(\alpha _{0})=\beta _{0}$\emph{, }$f(\alpha _{1})=\gamma _{1}$ \emph{etc.}
\end{enumerate}

We point out that the presentations in Lerman [1983] and Lachlan and Lebeuf
[1976] are phrased in terms of there being only one element/two functions
homogeneity interpolants instead of the two/three we required in our
definition. One/two are not sufficient in general and two/three are used in
the original proof for finite lattices in Lerman [1971] and ones of size $%
\aleph _{1}$ in Abraham and Shore [1986]. In addition, we have changed the
definition of homogeneity interpolants from the one used in Lerman [1971]
and elsewhere in the literature by requiring that $g:\alpha _{0},\alpha
_{1}\longmapsto \gamma _{0},\gamma _{1}$ (in place of $\gamma _{1},\gamma
_{0}$). This variation is a different special case than Lerman's of a more
general definition introduced in Kjos-Hanssen [2002], [2003].
(Kjos-Hanssen's version allows a finite sequence of interpolants $\gamma
_{i} $ and functions $f_{i}$ and is indifferent to order in that sense that
it only requires that $\{f_{i}(\alpha _{0}),f_{i}(\alpha _{1}\}\}=\{\gamma
_{i},\gamma _{i+1}\}$.) Adopting our version does not makes the construction
of the required interpolants (Proposition \ref{hominterp}) any more
difficult but makes our construction of the crucial splitting trees in
Proposition \ref{splittree} considerably simpler than the previous ones.
(The same simplification would carry over to $\mathcal{D}_{T}$ and other
degree structures.) All the previous constructions proceed by using only
susls and usl representations. We have mixed in the full lattice structure
by requiring the decomposition of $\mathcal{L}$ to be into slsls instead
while still using usl representations. We exploit the full lattice structure
at various points in our construction. We will see, moreover, in \S \ref%
{quest} that this use of the meet structure of $\mathcal{L}$ is also
necessary (in contrast to the situation in $\mathcal{D}_{T}$). Given such $%
\mathcal{L}_{i}$ and $\Theta _{i}$ we next need to define the class of trees
that will be eligible to be the first component of our forcing conditions.
Each of our trees will be a function from finite strings to finite strings
(of elements from the $\Theta _{i}$) with various properties.

\begin{definition}
\label{tree}A \emph{tree }$T$\emph{\ (for the sequence }$\langle \mathcal{L}%
_{i},\Theta _{i}\rangle $\emph{)} has an associated natural number $k=k(T)$. 
$T$ is a hyperarithmetic function such that for some $k\in \omega $ it has
the empty string $\emptyset $ and all strings in the Cartesian product $%
\prod\limits_{n=0}^{n=m}\Theta _{k+n}$ for each $m\in \mathbb{\omega }$. We
denote this number $k$ by $k(T)$. Moreover, for each $\sigma \in \text{dom }T$%
, $T(\sigma )\in \prod\limits_{n=0}^{n=q}\Theta _{n}$ for some $q\geq
|\sigma |-1$. Moreover, $T$ has the following properties for all $\sigma
,\tau \in \text{dom }T$:

\begin{enumerate}
\item (Order) $\sigma \subseteq \tau \Rightarrow T(\sigma )\subseteq T(\tau
) $.

\item (Nonorder) $\sigma |\tau \Rightarrow T(\sigma )|T(\tau )$. (We use $|$
to denote incompatibility of strings.)

\item (Uniformity) For every fixed length $l$ there is, for each $\alpha \in
\Theta _{k+l}$, a string $\rho _{l,\alpha }$ so that, for a given $l$, all
the $\rho _{l,\alpha }$ are of the same length independently of $\alpha $
and if $|\sigma |=l$ then $T(\sigma \symbol{94}\alpha )=T(\sigma )\symbol{94}%
\rho _{l,\alpha }$. (We use $\symbol{94}$ to denote concatenation and
confuse a single symbol such as $\alpha $ with the string $\left\langle
\alpha \right\rangle $ of length one.)
\end{enumerate}
\end{definition}

Thus our trees $T$ have branchings of width $|\Theta _{k(T)+n}|$ at level $n$
and obey the usual order and uniformity properties of those used in Lerman
[1983] to prove initial segment results for $\mathcal{D}_{T}$. Our
definition of $S$ being a subtree of $T$ incorporates the (technically
convenient) requirement also used there that the branchings on $S$ follow
those on $T$.

\begin{definition}
\label{subtree}We say that a tree $S$ is a \emph{subtree} of a tree $T$, $%
S\subseteq T$, if $k(S)\geq k(T)$ and $(\forall \sigma \in \text{dom }%
S)(\exists \tau \in \text{dom }T)[S(\sigma )=T(\tau )~\&~(\forall \alpha \in
\Theta _{k(S)+|\sigma |})(S(\sigma \symbol{94}\alpha )\supseteq T(\tau 
\symbol{94}\alpha ))]$.
\end{definition}

If it does not seem obvious, transitivity of the subtree relation will be
proven in Proposition \ref{trans}. We mention some specific operations on
trees that we will need later.

\begin{definition}
\label{fulldef}If $T$ is a tree and $\sigma \in \text{dom }T$ then $T_{\sigma
}$ is defined by $T_{\sigma }(\tau )=T(\sigma \symbol{94}\tau )$. Clearly, $%
k(T_{\sigma })=k(T)+|\sigma |$ and $T_{\sigma }\subseteq T$. For a string $%
\mu \in \prod\limits_{n=0}^{n=q}\Theta _{n}$ with $q\leq |T(\emptyset )|-1$,
we let $T^{\mu }$ (the \emph{transfer tree} of $T$ over $\mu $) be the tree
such that, for every $\sigma \in \text{dom }T$, $T^{\mu }(\sigma )$ is the
string gotten from $T(\sigma )$ by replacing its initial segment of length $%
q+1$ (which is contained in $T(\emptyset )$) by $\mu $. We write $T_{\sigma
}^{\mu }$ for $(T_{\sigma })^{\mu }$.
\end{definition}

A crucial notion for our constructions is that of preserving the congruences
of specified slsls of our given lattice $\mathcal{L}$.

\begin{definition}
\label{prescong}If $\mathcal{\hat{L}}$ is a finite slsl of $\mathcal{L}$ we
say that a subtree $S$ of $T$ \emph{preserves the congruences of }$\mathcal{%
\hat{L}}$, $S\subseteq _{\mathcal{\hat{L}}}T$, if $\mathcal{\hat{L}}%
\subseteq \mathcal{L}_{k(T)}$ and, whenever $x\in \mathcal{\hat{L}}$, $%
S(\sigma )=T(\tau )$, $\alpha \equiv _{x}\beta $, $S(\sigma \symbol{94}%
\alpha )=T(\tau \symbol{94}\mu )$ and $S(\sigma \symbol{94}\beta )=T(\tau 
\symbol{94}\nu )$, then $\mu \equiv _{x}\nu $. Here $\alpha $ and $\beta $
are members of the appropriate $\Theta _{i}$ and $\mu $ and $\nu $ are
sequences (necessarily of the same length $m$) of elements from the
appropriate $\Theta _{j}$'s. We say that such sequences $\mu $ and $\nu $
are congruent modulo $x$, $\mu \equiv _{x}\nu $, if $\mu (j)\equiv _{x}\nu
(j)$ for each $j<m$.
\end{definition}

\begin{proposition}
\label{trans}If $R\subseteq _{\mathcal{L}_{1}}S\subseteq _{\mathcal{L}_{2}}T$
and then $R\subseteq _{\mathcal{L}_{1}\cap \mathcal{L}_{2}}T$.
\end{proposition}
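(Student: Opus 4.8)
The plan is to prove the statement in two stages: first the underlying ordinary subtree relation $R\subseteq T$ (this is the ``transitivity of the subtree relation'' promised in the text preceding the proposition), and then the congruence clause. For the first stage I would compose the two witnessing correspondences. From $R\subseteq S$ and $S\subseteq T$ one reads off node maps $h_{RS}$ and $h_{ST}$ with $R(\sigma)=S(h_{RS}(\sigma))$, $S(\eta)=T(h_{ST}(\eta))$, and (using the order and nonorder properties of Definition \ref{tree}, which make each tree injective) the branching-following property $h_{RS}(\sigma\symbol{94}\alpha)\supseteq h_{RS}(\sigma)\symbol{94}\alpha$, and similarly for $h_{ST}$. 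Setting $h_{RT}=h_{ST}\circ h_{RS}$ and $\tau=h_{RT}(\sigma)$, I would check $R(\sigma)=T(\tau)$ and, for each $\alpha$, $R(\sigma\symbol{94}\alpha)\supseteq S(h_{RS}(\sigma)\symbol{94}\alpha)=T(h_{ST}(h_{RS}(\sigma)\symbol{94}\alpha))\supseteq T(\tau\symbol{94}\alpha)$, the last step because $h_{ST}$ follows branchings and $T$ preserves order. This gives $R\subseteq T$ with $k(R)\geq k(S)\geq k(T)$. The bookkeeping for Definition \ref{prescong} is immediate: $\mathcal{L}_{1}\cap\mathcal{L}_{2}$ is a finite slsl of $\mathcal{L}$ (intersection of two such), and since $S\subseteq_{\mathcal{L}_{2}}T$ forces $\mathcal{L}_{2}\subseteq\mathcal{L}_{k(T)}$ we get $\mathcal{L}_{1}\cap\mathcal{L}_{2}\subseteq\mathcal{L}_{k(T)}$.

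For the congruence clause, fix $x\in\mathcal{L}_{1}\cap\mathcal{L}_{2}$ and a configuration $R(\sigma)=T(\tau)$, $\alpha\equiv_{x}\beta$, $R(\sigma\symbol{94}\alpha)=T(\tau\symbol{94}\mu)$, $R(\sigma\symbol{94}\beta)=T(\tau\symbol{94}\nu)$. Let $\rho=h_{RS}(\sigma)$, so $T(\tau)=S(\rho)$ and $\tau=h_{ST}(\rho)$, and let $\mu_{1},\nu_{1}$ be the $S$-node suffixes with $h_{RS}(\sigma\symbol{94}\alpha)=\rho\symbol{94}\mu_{1}$ and $h_{RS}(\sigma\symbol{94}\beta)=\rho\symbol{94}\nu_{1}$. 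Since $x\in\mathcal{L}_{1}$ and $R\subseteq_{\mathcal{L}_{1}}S$, the first preservation gives $\mu_{1}\equiv_{x}\nu_{1}$ componentwise (of common length $m$, by uniformity). Writing $\mu_{1}=a_{0}\cdots a_{m-1}$, $\nu_{1}=b_{0}\cdots b_{m-1}$ with $a_{j}\equiv_{x}b_{j}$, the goal is to transport this congruence through $h_{ST}$ to the $T$-node suffixes $\mu=h_{ST}(\rho\symbol{94}\mu_{1})\setminus\tau$ and $\nu=h_{ST}(\rho\symbol{94}\nu_{1})\setminus\tau$.

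The key lemma I would isolate is a \emph{base-independence} statement for $h_{ST}$: for a fixed $S$-level $l$ and symbol $\gamma\in\Theta_{k(S)+l}$, the $T$-node suffix $H_{l,\gamma}$ added by $h_{ST}$ when appending $\gamma$ (that is, $h_{ST}(\eta\symbol{94}\gamma)\setminus h_{ST}(\eta)$) is the same for every $\eta$ of length $l$. This follows from the two uniformity facts of Definition \ref{tree}: by $S$-uniformity the \emph{value} increment $S(\eta\symbol{94}\gamma)\setminus S(\eta)$ is the fixed string $\rho_{l,\gamma}$, and by $T$-uniformity the value length $|T(\cdot)|$ is a strictly increasing function of node level, so all images $h_{ST}(\eta)$ with $|\eta|=l$ sit at one common $T$-level $L_{l}$; then the nonorder property makes the decomposition of a value increment into symbols unique at a given level, forcing the node suffix to be $H_{l,\gamma}$ regardless of $\eta$. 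Granting this, telescoping along the two paths gives $\mu=H_{\ell_{0},a_{0}}\symbol{94}\cdots\symbol{94}H_{\ell_{0}+m-1,a_{m-1}}$ and $\nu=H_{\ell_{0},b_{0}}\symbol{94}\cdots\symbol{94}H_{\ell_{0}+m-1,b_{m-1}}$ with $\ell_{0}=|\rho|$, and for each $j$ the pair $H_{\ell_{0}+j,a_{j}}$, $H_{\ell_{0}+j,b_{j}}$ is exactly the pair of $T$-node suffixes compared in the one-step preservation of $S\subseteq_{\mathcal{L}_{2}}T$ at any base node of $S$-level $\ell_{0}+j$; since $a_{j}\equiv_{x}b_{j}$ and $x\in\mathcal{L}_{2}$, we get $H_{\ell_{0}+j,a_{j}}\equiv_{x}H_{\ell_{0}+j,b_{j}}$. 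The matching block lengths (uniformity) then yield $\mu\equiv_{x}\nu$ componentwise, as required.

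The main obstacle is precisely the base-independence lemma. The preservation property of Definition \ref{prescong} only compares two successors of a \emph{single} base node, whereas the natural telescoping walks the $\alpha$-path and the $\beta$-path through \emph{distinct} base nodes; without knowing that the suffix attached to a given symbol is determined by its level alone, one cannot reduce the multi-symbol comparison to the one-step hypotheses. I expect the bulk of the care to go into verifying that value length determines node level and that the nonorder property gives unique symbol-by-symbol decomposition, both of which are routine consequences of Definition \ref{tree} but must be stated explicitly; everything else is the composition bookkeeping of the first stage.
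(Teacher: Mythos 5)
Your proposal is correct and takes essentially the same route as the paper: the subtree part is the identical composition of the two witnessing correspondences, and the congruence part is the same blockwise induction along the $T$-node suffixes using the one-step preservation hypotheses for $\mathcal{L}_{1}$ and $\mathcal{L}_{2}$ in turn. Your base-independence lemma is precisely the uniformity fact that the paper's terse parenthetical induction (``$\mu _{0}(j)\equiv _{x}\mu _{1}(j)$ gives $\nu _{0}^{j}\equiv _{x}\nu _{1}^{j}$'') takes for granted in order to apply Definition \ref{prescong} across the two distinct base nodes, so you have merely made explicit a step the paper leaves implicit.
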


\begin{proof}
To see that $R\subseteq T$ note first that $k(R)\geq k(S)\geq k(T)$. Next
suppose that $\rho \in \text{dom }R$ and $\alpha \in \Theta _{k(R)+|\rho |}$.
As $R\subseteq S$ we have a $\sigma $ such that $R(\rho )=S(\sigma )$ and $%
R(\rho \symbol{94}\alpha )\supseteq S(\sigma \symbol{94}\alpha ))$. As $%
S\subseteq T$ we have a $\tau $ such that $S(\sigma )=T(\tau )$ and $%
S(\sigma \symbol{94}\alpha )\supseteq T(\tau \symbol{94}\alpha ))$. Thus $%
R(\rho )=T(\tau )$ and $R(\rho \symbol{94}\alpha )\supseteq T(\tau \symbol{94%
}\alpha )$ as required. As for the preservation of $\mathcal{L}_{1}\cap 
\mathcal{L}_{2}$ congruences, suppose $R(\rho )=S(\sigma )=T(\tau )$, $x\in 
\mathcal{L}_{1}\cap \mathcal{L}_{2}$, $\alpha _{0},\alpha _{1}\in \Theta
_{k(R)+|\rho |}$ and $\alpha _{0}\equiv _{x}\alpha _{1}$. Let $R(\rho 
\symbol{94}\alpha _{i})=S(\sigma \symbol{94}\mu _{i})=T(\tau \symbol{94}\nu
_{i})$. As $x\in \mathcal{L}_{1}$ and $R\subseteq _{\mathcal{L}_{1}}S$, $\mu
_{0}\equiv _{x}\mu _{1}$. As $x\in \mathcal{L}_{2}$ and $S\subseteq _{%
\mathcal{L}_{2}}T$ it then follows by induction on the (by uniformity,
necessarily common) length of $\mu _{i}$ that $\nu _{0}\equiv _{x}\nu _{1}$
as required. (Write $\nu _{i}=\nu _{i}^{0}\symbol{94}\cdots \symbol{94}\nu
_{i}^{s}$ where $S(\sigma \symbol{94}\mu _{i}(0)\cdots \symbol{94}\mu
_{i}(t))=T(\tau \symbol{94}\nu _{i}^{0}\symbol{94}^{\cdots }\symbol{94}\nu
_{i}^{t})$. Then inductively $\mu _{0}(j)\equiv _{x}\mu _{1}(j)$ gives $\nu
_{0}^{j}\equiv _{x}\nu _{1}^{j}$.)
\end{proof}

\begin{definition}
\label{defforcing}Recall that $\mathcal{K}$ is an arbitrary sublattice of $%
\mathcal{L}$ (preserving $0$ and $1$). Our notion of forcing $\mathcal{P}%
_{\langle \mathcal{L}_{i},\Theta _{i}\rangle ,\mathcal{K}}=\mathcal{P}$ is
given first $_{{}}$by letting the \emph{forcing conditions} $P$ be the pairs 
$\langle T,\mathcal{\hat{K}}\rangle $ where $T$ is a tree for $\langle 
\mathcal{L}_{i},\Theta _{i}\rangle $ and $\mathcal{\hat{K}}$ is a finite
slsl of $\mathcal{K}\cap \mathcal{L}_{k(T)}$. We then say that $\langle
T_{1},\mathcal{K}_{1}\rangle $ \emph{extends }or\emph{\ refines} $\langle
T_{0},\mathcal{K}_{0}\rangle $, $\langle T_{1},\mathcal{K}_{1}\rangle \leq _{%
\mathcal{P}}$ $\langle T_{0},\mathcal{K}_{0}\rangle $ if $T_{1}\subseteq _{%
\mathcal{K}_{0}}T_{0}$ and $\mathcal{K}_{1}\supseteq \mathcal{K}_{0}$. If $%
P=\langle T,\mathcal{\hat{K}}\rangle $ is a condition we let $K(P)=\mathcal{%
\hat{K}}$, $Tr(P)=T$ and $k(P)=k(T)$. If the notion of forcing is fixed in
some context we will often omit the subscript $\mathcal{P}$ in $\leq _{%
\mathcal{P}}$. We sometimes abuse notation by identifying a condition $P$
with $Tr(P)$ when $K(P)$ is fixed. Along these lines, for example, we use $%
P_{\sigma },P^{\tau }$ and $P_{\sigma }^{\tau }$ to stand for $\langle
Tr(P)_{\sigma },K(P)\rangle ,\langle Tr(P)^{\tau },K(P)\rangle $ and $%
\langle Tr(P)_{\sigma }^{\tau },K(P)\rangle $, respectively. The top element
of $\mathcal{P}$ consists of the identity tree $Id$ (which has $k(Id)=0$)
and the slsl $\mathcal{L}_{0}=\{0,1\}$.
\end{definition}

\begin{lemma}
\label{fulllem}If $T$ is a tree, $\sigma \in \text{dom }T$ and $\mathcal{\hat{%
L}}\subseteq \mathcal{L}_{k(T)}$, then $T_{\sigma }\subseteq _{\mathcal{\hat{%
L}}}T$. So if $P=\langle T,\mathcal{\hat{K}}\rangle $ is a condition $%
\langle T_{\sigma },\mathcal{\hat{K}}\rangle \leq _{\mathcal{P}}\langle T,%
\mathcal{\hat{K}}\rangle $ (by letting $\mathcal{\hat{L}}=\mathcal{\hat{K}}$%
). If $\sigma ,\tau \in \text{dom }T$ are of the same length and $\langle S,%
\mathcal{\hat{K}}\rangle \leq $ $\langle T_{\sigma },\mathcal{\hat{K}}%
\rangle $ then $\langle S^{T(\tau )},\mathcal{\hat{K}}\rangle \leq _{%
\mathcal{P}}\langle T_{\tau },\mathcal{\hat{K}}\rangle $. We also have that $%
T_{\sigma }^{T(\tau )}=T_{\tau }$.
\end{lemma}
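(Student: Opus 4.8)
I would dispatch the four assertions in the order dictated by their dependencies, proving the transfer identity $T_\sigma^{T(\tau)}=T_\tau$ before the refinement claim about $S^{T(\tau)}$ that uses it. For $T_\sigma\subseteq_{\mathcal{\hat{L}}}T$, the inclusion $T_\sigma\subseteq T$ is already recorded in Definition \ref{fulldef} and the side condition $\mathcal{\hat{L}}\subseteq\mathcal{L}_{k(T)}$ is a hypothesis, so only congruence preservation is at issue. First I would note that for $\rho\in\text{dom }T_\sigma$ the subtree witness in $T$ is simply $\sigma\symbol{94}\rho$, since $T_\sigma(\rho)=T(\sigma\symbol{94}\rho)$. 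Then for $\alpha\equiv_x\beta$ with $x\in\mathcal{\hat{L}}$, the branching strings $\mu,\nu$ of Definition \ref{prescong} determined by $T_\sigma(\rho\symbol{94}\alpha)=T((\sigma\symbol{94}\rho)\symbol{94}\mu)$ are the length-one strings $\langle\alpha\rangle,\langle\beta\rangle$, because $T_\sigma$ reindexes $T$ without skipping branching levels. Hence the required $\mu\equiv_x\nu$ is literally the hypothesis $\alpha\equiv_x\beta$. The second assertion is then immediate on taking $\mathcal{\hat{L}}=\mathcal{\hat{K}}$ (legitimate because $\mathcal{\hat{K}}\subseteq\mathcal{K}\cap\mathcal{L}_{k(T)}$) together with $\mathcal{\hat{K}}\supseteq\mathcal{\hat{K}}$.

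The heart of the matter is $T_\sigma^{T(\tau)}=T_\tau$ for $|\sigma|=|\tau|$, and the engine is the uniformity clause of Definition \ref{tree}. I would first show, by induction on length using uniformity, that $|T(\sigma)|$ depends only on $|\sigma|$, so that $|T(\sigma)|=|T(\tau)|$; this makes the transfer tree well defined (the requirement $q\leq|T_\sigma(\emptyset)|-1$ holds, with equality) and guarantees that transferring over $T(\tau)$ replaces exactly the root value $T_\sigma(\emptyset)=T(\sigma)$. A second induction, again via uniformity, gives $T(\sigma\symbol{94}\rho)=T(\sigma)\symbol{94}w_\rho$ and $T(\tau\symbol{94}\rho)=T(\tau)\symbol{94}w_\rho$ with the same suffix $w_\rho$, since the accumulated suffix is the concatenation of the strings $\rho_{l,\cdot}$ at levels $l\geq|\sigma|=|\tau|$, which agree in the two cases. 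Replacing the prefix $T(\sigma)$ by $T(\tau)$ in $T_\sigma(\rho)=T(\sigma)\symbol{94}w_\rho$ therefore yields $T(\tau)\symbol{94}w_\rho=T_\tau(\rho)$; as $k(T_\sigma)=k(T)+|\sigma|=k(T)+|\tau|=k(T_\tau)$, the two trees share a domain and so coincide.

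For the refinement claim I would package the transfer as a single prefix-swapping map $\Phi$ carrying each string that extends $T(\sigma)$ to the string obtained by replacing that length-$|T(\sigma)|$ prefix by $T(\tau)$. Because $|T(\sigma)|=|T(\tau)|$, $\Phi$ is a length-preserving bijection between the strings extending $T(\sigma)$ and those extending $T(\tau)$, commuting with $\subseteq$ and $|$ and fixing suffixes; moreover every value $S(\rho)$ extends $S(\emptyset)\supseteq T(\sigma)$, so $S^{T(\tau)}=\Phi\circ S$ is well defined and $T_\tau=\Phi\circ T_\sigma$ by the identity just proved. Applying $\Phi$ to the subtree data witnessing $S\subseteq T_\sigma$ (and using $k(S)\geq k(T_\sigma)=k(T_\tau)$) yields the subtree data for $S^{T(\tau)}\subseteq T_\tau$; since $\Phi$ is injective, the strings $\mu,\nu$ arising in the congruence test for $S^{T(\tau)}\subseteq_{\mathcal{\hat{K}}}T_\tau$ are the very ones arising for $S\subseteq_{\mathcal{\hat{K}}}T_\sigma$, so the hypothesis supplies $\mu\equiv_x\nu$ (and $\mathcal{\hat{K}}\subseteq\mathcal{L}_{k(T_\tau)}$ holds because $k(T_\sigma)=k(T_\tau)$).

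The only real obstacle is the uniformity bookkeeping in the transfer identity: one must verify that transfer over $T(\tau)$ deletes precisely the root value and nothing more, and that $T_\sigma$ and $T_\tau$ carry identical suffix structure above level $|\sigma|=|\tau|$. Once those two consequences of uniformity are secured, everything else is routine, since $\Phi$ transports each relevant relation between $T_\sigma$ and $T_\tau$ verbatim.
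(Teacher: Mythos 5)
Your proposal is correct and follows exactly the route the paper intends: its proof of Lemma \ref{fulllem} simply asserts that the first assertions ``follow directly from the definitions'' and the last two ``from the uniformity assumption,'' and your argument is the detailed verification of precisely those claims (the witness $\sigma\symbol{94}\rho$ with $\mu=\langle\alpha\rangle$, $\nu=\langle\beta\rangle$ for congruence preservation, and the two uniformity inductions giving length-homogeneity and the common suffix $w_\rho$ for the transfer identity and the prefix-swap map $\Phi$). No gaps; your explicit check that $|T(\sigma)|=|T(\tau)|$ so that the transfer replaces exactly the root value is the one bookkeeping point the paper leaves implicit.
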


\begin{proof}
The first assertions follow directly from the definitions. The last two
follow from the uniformity assumption on our trees.
\end{proof}

It is easy to see that with trivial genericity requirements any generic
filter $\mathcal{G}$ determines a function $G\in
D=\prod\limits_{n=0}^{\infty }\Theta _{n}$ , i.e. a function on $\omega $
with $G(n)\in \Theta _{n}$. ($G=\bigcup \{T(\emptyset )|T$ is the first
component of some condition in $\mathcal{G}\}$.) On this basis we could
naively try to define our embedding of $\mathcal{K}$ into the hyperdegrees
as follows. For $x\in \mathcal{K}\subseteq \mathcal{L}$ we let $G_{x}:\omega
\rightarrow \omega $ be defined by $G_{x}(n)=G(n)(x)$. (As $G(n)\in \Theta
_{n}\subseteq \Theta $ it is a map from $\mathcal{L}$ into $\omega $.) The
desired image of $x$ would then be $\deg _{h}(G_{x})$. Now the order and
join properties of usl representations guarantee that this embedding
preserves order and join (on all of $\mathcal{L}$ even). If $x\leq y$ then
by the order property we can (recursively in the hyperarithmetic
representation $\left\langle \Theta _{i}\right\rangle $) calculate $G_{x}(m)$
from $G_{y}(m)$ by finding any $\alpha \in \Theta _{m}$ with $\alpha
(y)=G_{y}(m)$ and declaring that $G_{x}(m)=\alpha (x)$. (Such an $\alpha $
exists since $G(m)$ is one.) Similarly if $x\vee y=z$ then, by the join
property, we can calculate $G_{z}(m)$ from $G_{x}(m)$ and $G_{y}(m)$ by
finding any $\alpha \in \Theta _{m}$ such that $\alpha (x)=G_{x}(m)$ and $%
\alpha (y)=G_{y}(m)$ and declaring that $G_{z}(m)=\alpha (z)$. (Again $G(m)$
is such an $\alpha $.)

Were congruences modulo $x$ always preserved for every $x$, we could
directly carry out the diagonalization and other requirements as well for
this definition of $G_{x}$. (Of course, we cannot allow this to happen as it
would produce an embedding of all of $\mathcal{L}$ as an initial segment of $%
\mathcal{D}_{h}$ in place of the one wanted of $\mathcal{K}$.) In actuality,
not all congruences are preserved as we refine to various subtrees in our
construction. Thus we must modify the definition of the images in $\mathcal{D%
}_{h}$ and provide nice representations of the hyperdegrees corresponding to 
$x$. To that end we introduce some dense sets that our generic must meet.

\begin{lemma}
\label{basicden}For each $x\in \mathcal{K}$ and $k\in \mathbb{\omega }$ the
sets $\{P|x\in K(P)\}$ and $\{P|k(P)\geq k\}$ are dense.
\end{lemma}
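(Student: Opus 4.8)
The plan is to establish both density claims by the single device of passing to a full subtree $T_{\sigma}$, which simultaneously raises $k$ and, by Lemma \ref{fulllem}, is automatically a legitimate refinement preserving the congruences of the current second component. Throughout, recall that $\text{dom } T$ contains strings of every length (the $\Theta_{i}$ are nonempty, each containing the zero function), that $k(T_{\sigma})=k(T)+|\sigma|$, and that the $\mathcal{L}_{i}$ are nested slsls with union $\mathcal{L}$.

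For the set $\{P\mid k(P)\geq k\}$, given a condition $P=\langle T,\mathcal{\hat{K}}\rangle$ I would do nothing if $k(T)\geq k$ and otherwise choose any $\sigma\in\text{dom } T$ with $|\sigma|=k-k(T)$. By Lemma \ref{fulllem} (taking $\mathcal{\hat{L}}=\mathcal{\hat{K}}$, which is permissible since $\mathcal{\hat{K}}\subseteq\mathcal{L}_{k(T)}$) we have $\langle T_{\sigma},\mathcal{\hat{K}}\rangle\leq_{\mathcal{P}}P$, while $k(T_{\sigma})=k(T)+|\sigma|=k$, so this refinement lies in the set.

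For the set $\{P\mid x\in K(P)\}$ with $x\in\mathcal{K}$, the extra task is to enlarge the second component to contain $x$. Given $P=\langle T,\mathcal{\hat{K}}\rangle$, since $\mathcal{L}=\bigcup_{j}\mathcal{L}_{j}$ and the $\mathcal{L}_{j}$ are nested I can pick an index $i\geq k(T)$ with $x\in\mathcal{L}_{i}$, and then a string $\sigma\in\text{dom } T$ with $|\sigma|=i-k(T)$, so that $k(T_{\sigma})=i$. Let $\mathcal{\hat{K}}'$ be the slsl generated by $\mathcal{\hat{K}}\cup\{x\}$; closing a finite set under the (idempotent, commutative, associative) meet of $\mathcal{L}$ keeps it finite. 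I would then verify that $\mathcal{\hat{K}}'$ is an admissible second component for $T_{\sigma}$: its generators lie in $\mathcal{K}\cap\mathcal{L}_{i}$ (using $\mathcal{\hat{K}}\subseteq\mathcal{L}_{k(T)}\subseteq\mathcal{L}_{i}$ and $x\in\mathcal{K}\cap\mathcal{L}_{i}$), and since both $\mathcal{K}$ and the slsl $\mathcal{L}_{i}$ are closed under the meet of $\mathcal{L}$, all generated meets remain in $\mathcal{K}\cap\mathcal{L}_{i}=\mathcal{K}\cap\mathcal{L}_{k(T_{\sigma})}$. Hence $Q=\langle T_{\sigma},\mathcal{\hat{K}}'\rangle$ is a condition with $x\in K(Q)$, and $Q\leq_{\mathcal{P}}P$ because $T_{\sigma}\subseteq_{\mathcal{\hat{K}}}T$ by Lemma \ref{fulllem} and $\mathcal{\hat{K}}'\supseteq\mathcal{\hat{K}}$.

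There is no serious obstacle here; the one point requiring a little care is checking that the enlarged second component $\mathcal{\hat{K}}'$ remains a finite slsl of $\mathcal{K}\cap\mathcal{L}_{k(T_{\sigma})}$ rather than merely a finite subset, which comes down to the closure of both $\mathcal{K}$ and $\mathcal{L}_{i}$ under meet. Everything else is a direct appeal to Lemma \ref{fulllem} and to the definition of $\leq_{\mathcal{P}}$.
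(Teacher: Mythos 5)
Your proof is correct and follows essentially the same route as the paper's: the paper likewise passes to the full subtree $S(\sigma)=Tr(Q)(0^{i-k(Q)}\,\hat{}\,\sigma)$ and replaces the second component by the slsl generated by $K(Q)$ and $x$, merely handling both dense sets in a single refinement (choosing $i\geq k(Q),k$ with $\mathcal{K}'\subseteq\mathcal{L}_{i}$) where you treat them separately. Your added observation that meet-closure of $\mathcal{K}$ and $\mathcal{L}_{i}$ makes the containment $\mathcal{\hat{K}}'\subseteq\mathcal{K}\cap\mathcal{L}_{i}$ automatic is a correct, slightly more explicit version of the paper's choice of $i$.
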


\begin{proof}
Consider any $Q\in \mathcal{P}$, $x\in \mathcal{K}$ and $k\in \omega $. Let $%
\mathcal{K}^{\prime }$ be the slsl of $\mathcal{K}$ generated by $K(Q)$ and $%
x$ and let $i\geq k(Q),k$ be such that $\mathcal{K}^{\prime }\subseteq 
\mathcal{L}_{i}$. Define $S$ with $k(S)=i$ by $S(\sigma )=Tr(Q)(0^{i-k(Q)}%
\symbol{94}\sigma )$. Clearly, $\langle S,\mathcal{K}^{\prime }\rangle \leq
_{\mathcal{P}}Q$ and is in both required sets. \bigskip
\end{proof}

From now on assume that any generic filter $\mathcal{G}$ we consider meets
the dense sets of Lemma \ref{basicden}. They suffice to define our generic $%
G $ as above. More crucially, they allow us to define nice representatives
of the desired images of $x$ in $\mathcal{D}_{h}$.

\begin{definition}
If $\mathcal{G}$ is a generic filter meeting the dense sets of Lemma \ref%
{basicden}, $G$ the corresponding element of $\prod\limits_{n=0}^{\infty
}\Theta _{n}$, $P\in \mathcal{G}$ and $x\in K(P)$ then $G^{P}$ is the
sequence $\left\langle \alpha _{n}|n\in \omega \right\rangle $ where $%
Tr(P)(\left\langle \alpha _{n}|n<m\right\rangle )\subseteq G$ for every $m$.
(Thus $\left\langle \alpha _{n}\right\rangle $ is the path that $G$ follows
in the domain of $Tr(P)$. It is obvious from the definitions that $G$ is a
path on (i.e. in the range of) $Tr(Q)$ for every $Q\in \mathcal{G}$.) We
define $G_{x}^{P}(n)$ as $\alpha _{n}(x)$. (In the parlance of Lachlan and
Lebeuf [1976], $G^{P}$ is the \emph{signature of }$G$\emph{\ on }$Tr(P)$.)
\end{definition}

The crucial point is that the hyperdegree of $G_{x}^{P}$ does not depend on $%
P$.

\begin{lemma}
If $x\in K(P),K(Q)$ for $P,Q$ in a generic $\mathcal{G}$, then $%
G_{x}^{P}\equiv _{h}G_{x}^{Q}$.
\end{lemma}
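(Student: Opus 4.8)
The plan is to show $G_x^P \equiv_h G_x^Q$ by using the generic filter to find a common refinement and exploiting the uniformity and congruence-preservation structure built into the forcing conditions.

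First I would reduce to the case where one condition refines the other. Since $P, Q \in \mathcal{G}$ and $\mathcal{G}$ is a filter (directed), there is a condition $R \in \mathcal{G}$ with $R \leq_{\mathcal{P}} P$ and $R \leq_{\mathcal{P}} Q$. If I can prove the claim for any comparable pair, then I get $G_x^R \equiv_h G_x^P$ and $G_x^R \equiv_h G_x^Q$, and transitivity of $\equiv_h$ finishes the argument. So it suffices to prove: if $R \leq_{\mathcal{P}} P$ (both in $\mathcal{G}$) and $x \in K(P)$, then $G_x^R \equiv_h G_x^P$. Note that $x \in K(P) \subseteq K(R)$ automatically since refinement requires $K(R) \supseteq K(P)$, so both objects are defined.

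Now I would analyze the two reductions. Since $R \leq_{\mathcal{P}} P$ means $Tr(R) \subseteq_{K(P)} Tr(P)$, and $x \in K(P)$, the definition of congruence preservation (Definition \ref{prescong}) applies to $x$. The direction $G_x^P \leq_h G_x^R$ should be the easier one: $G$ follows some path $\langle \beta_n \rangle$ through $Tr(R)$ and some path $\langle \alpha_n \rangle$ through $Tr(P)$, and because $Tr(R)$ is a subtree of $Tr(P)$ whose branchings follow those of $Tr(P)$ (Definition \ref{subtree}), each $Tr(R)(\sigma)$ equals some $Tr(P)(\tau)$; tracing how the path on $R$ sits inside $P$ lets me recover the $P$-signature values $\alpha_n(x)$ from the $R$-signature, using the hyperarithmetic presentation $\langle \mathcal{L}_i, \Theta_i \rangle$ to do the bookkeeping. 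For the reverse direction $G_x^R \leq_h G_x^P$, the key is congruence preservation: knowing $G_x^P$ tells me, at each level, the $x$-congruence class of the relevant initial-segment string of $G$ on $Tr(P)$; the condition $Tr(R) \subseteq_{K(P)} Tr(P)$ guarantees that this $x$-class determines the $x$-class of the corresponding block on $Tr(R)$, i.e.\ the values $\beta_n(x)$ are computed (hyperarithmetically, via searching $\Theta_i$) from the $\alpha_n(x)$. This is exactly the \textquotedblleft forgetfulness\textquotedblright\ role of the slsl component flagged in the introduction: once $x \in K(P)$, refinements are forced to respect $\equiv_x$, so $x$-information is transported faithfully in both directions.

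The main obstacle I anticipate is the reverse direction, specifically making the block-structure bookkeeping precise. Because $R$ is a subtree of $P$ and the two trees have different branching widths (levels shift by $k(R) - k(P)$), a single step along the path of $G$ on $Tr(R)$ corresponds to a block $\mu_i$ of several steps on $Tr(P)$, with $\mu_i$ a string of $\Theta_j$-elements (as in Definition \ref{prescong} and the induction in the proof of Proposition \ref{trans}). I must check that congruence modulo $x$ is preserved blockwise and that, given the $x$-values read off $G_x^P$ across such a block, I can hyperarithmetically locate a witnessing $\beta_n \in \Theta_{k(R)+n}$ with the correct $x$-value—using that the sequences $\langle \Theta_i \rangle$ and the subtree relation are recursive in $\mathcal{L}$ and hence hyperarithmetic. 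The induction of Proposition \ref{trans} is the template for this blockwise propagation of $\equiv_x$, and I would invoke it to keep the calculation routine rather than reproving it from scratch.
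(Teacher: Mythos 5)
Your skeleton matches the paper's: use directedness of $\mathcal{G}$ to reduce to a comparable pair $R\leq_{\mathcal{P}}P$, then prove the two hyperarithmetic reductions separately, one by bookkeeping and one by congruence preservation. But you have attached the two tools to the wrong directions, and the direction you call \textquotedblleft easier\textquotedblright\ fails as argued. Write $G^{P}=\langle \alpha _{i}\rangle $ and $G^{R}=\langle \beta _{n}\rangle $. The definition of subtree forces $\beta _{n+1}=\alpha _{m(n)+1}$, where $m(n)$ records how far along $Tr(P)$ the node $Tr(R)(\langle \beta _{s}|s\leq n\rangle )$ sits; by uniformity $m$ is path-independent and recursive in the (hyperarithmetic) trees. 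Hence $G_{x}^{R}$ is literally a subsequence of $G_{x}^{P}$, so $G_{x}^{R}\leq _{h}G_{x}^{P}$ is the trivial direction and needs no congruence preservation at all. Your stated mechanism for it is also garbled: there is no \textquotedblleft block on $Tr(R)$\textquotedblright\ --- a single step on $Tr(R)$ corresponds to a block of steps on $Tr(P)$, not conversely.

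The direction you call easy, $G_{x}^{P}\leq _{h}G_{x}^{R}$, is exactly where $Tr(R)\subseteq _{K(P)}Tr(P)$ is essential, and your \textquotedblleft tracing\textquotedblright\ justification breaks down: from $G_{x}^{R}$ you know only the values $\beta _{n}(x)$, i.e.\ the $x$-congruence classes of the branch choices, not the choices $\beta _{n}$ themselves (for $x\neq 1$ there are many $\beta ^{\prime }\equiv _{x}\beta _{n}$ leading to different nodes, so the actual path of $G$ on $Tr(R)$ cannot be traced, and the intermediate values $\alpha _{i}(x)$ inside a block are not recoverable by bookkeeping alone). The repair is the paper's argument: since $x\in K(P)$, Definition \ref{prescong} says that $x$-congruent branch choices on $Tr(R)$ produce blockwise $x$-congruent extensions on $Tr(P)$, so the block $\langle \alpha _{i}(x)|m(n)+1\leq i<m(n+1)\rangle $ is a well-defined function of $\beta _{n+1}(x)$: hyperarithmetically pick any witness in the appropriate $\Theta _{j}$ with the given $x$-value, compute its block on $Tr(P)$, and read off the $x$-values. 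Your witness-search idea in the last paragraph is the right move, but you aim it at the wrong reduction. Once the two directions are swapped and the tracing claim is replaced by this congruence argument, your proof coincides with the paper's.
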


\begin{proof}
As there is an $R\leq P,Q$ in $\mathcal{G}$ by the compatibility of all
conditions in a generic filter, it suffices to consider the case that $Q\leq
P$. Let $G^{P}=\left\langle \alpha _{n}\right\rangle $ and $%
G^{Q}=\left\langle \beta _{n}\right\rangle $. By the definition of subtree
there is for each $n$ an $m(n)$ such that $Tr(Q)(\left\langle \beta
_{s}|s<n\right\rangle )=Tr(P)(\left\langle \alpha _{m(s)}|s<n\right\rangle )$
and we can compute the function $m$ recursively in the trees. (By the
uniformity of the trees there is for each $n$ a unique $m(n)$ such that $%
|Tr(Q)(\sigma )|=|Tr(P)(\tau )|$ for every $\sigma $ of length $n$ and every 
$\tau $ of length $m(n)$.) Moreover, by our definition of subtree, $\beta
_{n+1}=\alpha _{m(n)+1}$. Thus $G_{x}^{Q}(n)=\beta _{n}(x)=\alpha
_{m(n)}(x)=G_{x}^{P}(m(n))$ and so $G_{x}^{Q}\leq _{h}G_{x}^{P}$. The other
direction depends on the congruence preservations for $x$ implied by $%
Tr(Q)\subseteq _{K(P)}Tr(P)$.

Suppose that we have, by a hyperarithmetic recursion, determined $%
G_{x}^{P}(i)=\alpha _{i}(x)$ for $i\leq m(n)$. The next step followed by $G$
in $Q$ is $\beta _{n+1}=\alpha _{m(n)+1}$. It corresponds to the sequence $%
\left\langle \alpha _{i}|m(n)+1\leq i<m(n+1)\right\rangle $. The definition
of $\subseteq _{K(P)}$ implies that $\left\langle \alpha _{i}(x)|m(n)+1\leq
i<m(n+1)\right\rangle $ is uniquely determined by $\beta _{n+1}(x)$ to
continue the recursion.
\end{proof}

Thus given a generic $\mathcal{G}$ we can define a map from $\mathcal{K}$
into $\mathcal{D}_{h}$ by sending $x\in \mathcal{K}$ to $\deg
_{h}(G_{x}^{P}) $ for any $P\in \mathcal{G}$ with $x\in K(P)$. Our naive
proofs of the preservation of order an join can now be made precise by
simply applying them to $G^{P}$ on $Tr(P)$ (in place of $G$ on $Id$) for any 
$P\in \mathcal{G}$ with $x,y,z\in K(P)$.

Now we must define, analyze and exploit our forcing language and relation to
prove that the embedding corresponding to a sufficiently generic $G$
preserves nonorder and produces an initial segment of the hyperdegrees.

We proceed essentially as for (perfect) forcing with binary trees in Sacks
[1990, IV.4]. Our base structure (model) is formally the ramified analytic
hierarchy up to $\omega _{1}^{CK}$ (Church-Kleene $\omega _{1}$, the first
nonrecursive ordinal) which we denote by $\mathcal{M}$. This is the analog
of $L_{\omega _{1}^{CK}}$ where the language and definitions are set in
second order arithmetic instead of set theory. The sets in the structure
are, however, just those subsets of $\omega $ that appear in $L_{\omega
_{1}^{CK}}$ or equivalently the hyperarithmetic ones. Our forcing extension
will be $\mathcal{M}(\omega _{1}^{CK},G)$ which is defined analogously to $%
L_{\omega _{1}^{CK}}[G]$. If $\omega _{1}^{G}$, the least ordinal not
recursive in $G$, is $\omega _{1}^{CK}$ (as will be the case for generic $G$%
) then the sets in $\mathcal{M}(\omega _{1}^{CK},G)$ are precisely those
hyperarithmetic in $G$. To define and describe the model and forcing
relation we have a ramified forcing language as introduced in Feferman
[1965] and described in Sacks [1990, III.4] with a term $\mathcal{G}$ for
the function $G$ (in place of $\mathcal{T}$ and the set $T$) to describe
this structure and define the forcing relation. (We are thinking of
everything such as $\mathcal{L}$, $\mathcal{K}$, $\mathcal{L}_{i}$ and $%
\Theta _{i}$ as coded in second order arithmetic.) In addition to the usual
paraphernalia of second order arithmetic, the language has ranked set
variables $X^{\zeta }$ for ordinals $\zeta <\omega _{1}^{CK}$ which range
over the sets constructed by level $\zeta $ in $\mathcal{M}(\omega
_{1}^{CK},G)$. A formula is ranked if all its set variables are ranked. We
refer to Sacks [1990, III.4] for the simultaneous recursive definitions of
the sets $\mathcal{M}(\zeta ,G))$, interpretations of terms $\hat{x}\mathcal{%
H}(x)$ for the set of numbers satisfying the ranked formula $\mathcal{H}$
and for the truth of ranked and unranked formulas in the structure $\mathcal{%
M}(\omega _{1}^{CK},G)$. The definitions proceed through inductions on a
reasonably defined notion of \textquotedblleft full ordinal
rank\textquotedblright\ of a formula. Sacks also provides an analysis of the
complexity of the satisfaction relation for ranked (and unranked) formulas.
Of course, unranked formulas are interpreted as having their unranked
variables ranging over all of $\mathcal{M}(\omega _{1}^{CK},G)$. As the
change from his setting to ours is purely notational, going from a set
(element of $2^{\omega }$) to a function in $D$, the development there
carries over with only notational changes.

The crucial starting point of the definition of the forcing is that $\langle
T,\mathcal{\hat{K}}\rangle \Vdash \mathcal{F}$ for a ranked formula $%
\mathcal{F}$ if and only if $\mathcal{M}(\omega _{1}^{CK},G)\models \mathcal{%
F}$ for every $G\in \lbrack T]$ (the paths through $T$). The relation is
then defined on the unranked formulas by induction on the usual complexity
of formulas. The clauses for conjunction and negation are standard: $P\Vdash 
\mathcal{F}\wedge \mathcal{H}\Leftrightarrow P\Vdash \mathcal{F}$ and $%
P\Vdash \mathcal{H}$; $P\Vdash \lnot \mathcal{F}\Leftrightarrow (\forall
Q\leq P)(Q\nVdash \mathcal{F})$. There are three cases for the existential
quantifiers: $P\Vdash \exists x\mathcal{F}(x)\Leftrightarrow P\Vdash 
\mathcal{F}(\underline{n})$ for some numeral $\underline{n}$; $P\Vdash
\exists X^{\zeta }\mathcal{F}(X^{\zeta })$ for $\zeta <\omega _{1}^{CK}$ $%
\Leftrightarrow P\Vdash \mathcal{F}(\hat{x}\mathcal{H}(x))$ for some $%
\mathcal{H}$ of rank at most $\zeta $; $P\Vdash \exists X\mathcal{F}%
(X)\Leftrightarrow P\Vdash \exists X^{\zeta }\mathcal{F}(X^{\zeta })$ for
some $\zeta <\omega _{1}^{CK}$.

The key complexity theoretic fact from Sacks [1990, IV.4] is that the
forcing relation for $\Sigma _{1}^{1}$ sentences (those with initial
unranked existential set quantifiers followed by a ranked formula) is a $\Pi
_{1}^{1}$ relation. This fact is again established quite abstractly and is
still true by the same simple argument for our new definition of forcing
conditions. The crucial property of perfect forcing in Sacks [1990] that now
allows one to prove that for generic $G$ forcing equals truth and $\omega
_{1}^{CK}$ is preserved (i.e.\ $\omega _{1}^{G}=\omega _{1}^{CK}$) is the
fusion lemma to which we now turn.

\section{The fusion lemma\label{fusionsec}}

We can view the problem of preserving $\omega _{1}^{CK}$ as preserving
admissibility in $L_{\omega _{1}^{CK}}$ when we add on the new function $G$.
From this point of view, we want to preserve $\Sigma _{1}$ replacement,
i.e.\ $L_{\omega _{1}^{CK}}[G]\models \forall n\in \omega \exists X\varphi
(n,X)\rightarrow \exists X\forall n\in \omega (\varphi (n,X^{[n]})$.
Translated into our setting of second order arithmetic and $\mathcal{M}%
(\omega _{1}^{CK},G)$ this is equivalent to preserving $\Delta _{1}^{1}$%
-comprehension. The key in either case is what is called the fusion lemma.
It it is a natural step from the replacement version when one replaces
satisfaction by forcing and wants to prove that one can densely decide
sentences of the desired form (as is needed to show the usual equivalence
between forcing and truth).

\begin{lemma}[Fusion]
\label{fusion}Let $P=\langle T,\mathcal{\hat{K}}\rangle $ be a forcing
condition and $\exists \bar{X}_{n}\mathcal{F}_{n}(\bar{X}_{n})$ a
hyperarithmetic sequence of $\Sigma _{1}^{1}$ sentences with their unranked
quantifiers displayed such that $\forall n\forall Q\leq P\exists R\leq
Q[R\Vdash \exists \bar{X}_{n}\mathcal{F}_{n}(\bar{X}_{n})]$. Then there is a 
$Q\leq P$, and even one with $k(Q)=k(P)$ and $K(Q)=K(P)$, such that $\forall
n(Q\Vdash \exists \bar{X}_{n}\mathcal{F}_{n}(\bar{X}_{n}))$. Moreover, there
is a $\zeta <\omega _{1}^{CK}$ such that $\forall n(Q\Vdash \exists \bar{X}%
_{n}^{\zeta }\mathcal{F}_{n}(\bar{X}_{n}^{\zeta }))$.
\end{lemma}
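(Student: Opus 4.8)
The plan is to build, by a recursion carried out inside $\mathcal{M}$, a sequence of conditions $P = P_0, P_1, P_2, \dots$ with $P_{n+1} \leq_{\mathcal{P}} P_n$ whose tree parts converge to a single tree $Q$, keeping the second component equal to $K(P)$ throughout and controlling the branching so that $k$ never grows at the top. The fundamental enabling fact is the forgetfulness property (Proposition \ref{refine}): whether a condition forces a $\Sigma _{1}^{1}$ sentence depends only on its tree part, not on the finite slsl $\mathcal{\hat{K}}$. Hence I work entirely with trees refining $T=Tr(P)$ and preserving the congruences of $K(P)$, fixing $\mathcal{\hat{K}}=K(P)$ in every condition; this is exactly what makes the individual refinements demanded at different stages mutually compatible. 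Define the auxiliary relation $T_{n+1}\leq _{n}T_{n}$ to mean $T_{n+1}\subseteq _{K(P)}T_{n}$ together with $T_{n+1}(\sigma )=T_{n}(\sigma )$ for all $\sigma $ with $|\sigma |\leq n$. A sequence with $T_{n+1}\leq _{n}T_{n}$ has a well-defined limit $Q$ (set $Q(\sigma )=T_{|\sigma |}(\sigma )$), and the Order, Nonorder, and Uniformity clauses of Definition \ref{tree} each hold for $Q$ because every relevant instance is decided at a finite stage; since the top branching of $Q$ agrees with that of $T$ we get $k(Q)=k(P)$, and by transitivity (Proposition \ref{trans}) $Q\subseteq _{K(P)}T$, so $\langle Q,K(P)\rangle \leq _{\mathcal{P}}P$.

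At stage $n$ I diagonalize for the single sentence $\exists \bar{X}_{n}\mathcal{F}_{n}(\bar{X}_{n})$. Pick a representative node $\sigma _{0}$ of length $n$ on $T_{n}$. By Lemma \ref{fulllem} and Proposition \ref{trans} the localization $(T_{n})_{\sigma _{0}}$ is $\leq _{\mathcal{P}}P$, so the density hypothesis yields $R\leq _{\mathcal{P}}(T_{n})_{\sigma _{0}}$ with $R\Vdash \exists \bar{X}_{n}\mathcal{F}_{n}$; by the clause defining forcing of an unranked existential set quantifier this means $R\Vdash \mathcal{F}_{n}(\hat{x}\mathcal{H})$ for a term $\hat{x}\mathcal{H}$ of some rank $\zeta _{n}<\omega _{1}^{CK}$, i.e.\ $R\Vdash \exists \bar{X}_{n}^{\zeta _{n}}\mathcal{F}_{n}$. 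I then form $T_{n+1}$ by keeping $T_{n}$ through level $n$ and grafting the transferred copies $R^{T_{n}(\sigma )}$ below each node $\sigma $ of length $n$; the uniformity identity $T_{\sigma }^{T(\tau )}=T_{\tau }$ of Lemma \ref{fulllem} guarantees these copies fit together into a genuine tree with $T_{n+1}\leq _{n}T_{n}$, and the grafting is arranged below the fixed top portion so as not to raise $k$. Because $\Sigma _{1}^{1}$-forcing is invariant under the transfer operation and independent of $K(P)$ (again Proposition \ref{refine}), each localization $(T_{n+1})_{\sigma }$ forces $\exists \bar{X}_{n}^{\zeta _{n}}\mathcal{F}_{n}$; since $[T_{n+1}]$ is the union of the $[(T_{n+1})_{\sigma }]$ over $|\sigma |=n$ and the witnessing term is uniform along the paths, $T_{n+1}\Vdash \exists \bar{X}_{n}^{\zeta _{n}}\mathcal{F}_{n}$. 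As $\Sigma _{1}^{1}$-forcing is preserved downward and $Q\leq _{\mathcal{P}}T_{n+1}$, we get $Q\Vdash \exists \bar{X}_{n}^{\zeta _{n}}\mathcal{F}_{n}$ for every $n$, which already yields the first conclusion.

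The main obstacle, and the one place where the hyperarithmetic setting is essential, is the \textquotedblleft moreover\textquotedblright\ clause: producing a single $\zeta <\omega _{1}^{CK}$ bounding all the $\zeta _{n}$. Here I use that the entire recursion can be executed as a $\Sigma _{1}$ recursion over $\mathcal{M}=L_{\omega _{1}^{CK}}$. The forcing relation for $\Sigma _{1}^{1}$ sentences is $\Pi _{1}^{1}$ (Sacks [1990, IV.4]) and so $\Sigma _{1}$-definable over $\mathcal{M}$; using the $\Delta _{1}$ well-ordering of $\mathcal{M}$ I always select the least available $R$ and the least rank $\zeta _{n}$ witnessing the forcing, so that $\langle T_{n}\rangle $ is hyperarithmetic and $n\mapsto \zeta _{n}$ is a total function on $\omega $ lying in $\mathcal{M}$. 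By $\Sigma _{1}$-admissibility of $L_{\omega _{1}^{CK}}$ (equivalently, $\Sigma _{1}^{1}$-boundedness) its range is bounded, so $\zeta :=\sup _{n}\zeta _{n}<\omega _{1}^{CK}$. Since forcing $\exists \bar{X}_{n}^{\zeta _{n}}\mathcal{F}_{n}$ entails forcing $\exists \bar{X}_{n}^{\zeta }\mathcal{F}_{n}$ (a witnessing term of rank $\leq \zeta _{n}$ has rank $\leq \zeta $), this single ordinal $\zeta $ works uniformly, completing the proof. In short, the combinatorial heart is the transfer-and-graft step that keeps the tree uniform with $k$ and $K$ fixed, while the genuinely delicate point---precisely the preservation of $\Sigma _{1}$-replacement flagged at the opening of this section---is the boundedness of the ranks $\zeta _{n}$ below $\omega _{1}^{CK}$.
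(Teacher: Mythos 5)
Your overall architecture---fusing a sequence of trees with $k$ and the slsl component held fixed at $k(P)$ and $K(P)$, normalizing each refinement via Proposition \ref{refine}, selecting witnesses definably so the construction is hyperarithmetic, and extracting the single bound $\zeta$ by boundedness over $L_{\omega_{1}^{CK}}$ (the paper uses Kreisel uniformization of the $\Pi_{1}^{1}$ forcing relation for the same bookkeeping)---is the right one. But there is a genuine gap at the combinatorial heart of your stage construction: the claim that $\Sigma_{1}^{1}$-forcing is invariant under the transfer operation, which you attribute to Proposition \ref{refine}. That proposition says something different: the slsl component is irrelevant, and $k$ can be restored by passing to a subtree $S$ with $[S]\subseteq \lbrack T_{1}]$. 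Forcing of a ranked sentence survives \emph{that} move because forcing ranked sentences is, by definition, truth on all paths, and there the set of paths only shrinks. The transfer $R\mapsto R^{T_{n}(\sigma )}$ is not path-shrinking: it replaces the common initial segment of every path through $R$ by a different string, while keeping the later increments. A ranked formula may mention $\mathcal{G}(\underline{m})$ for $m$ below the transfer length---for instance, tie the early values of $\mathcal{G}$ to its later behavior---so truth on the paths of $R$ gives no information about truth on the paths of $R^{T_{n}(\sigma )}$. Hence $R\Vdash \mathcal{F}_{n}(\hat{x}\mathcal{H})$ does not yield $R^{T_{n}(\sigma )}\Vdash \mathcal{F}_{n}(\hat{x}\mathcal{H})$; your hypothesis guarantees only that the transferred condition, like any condition below $P$, has some \emph{further} refinement forcing the sentence, and your grafted $T_{n+1}$ need not force anything below the nodes other than $\sigma _{0}$.

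Note that you also cannot simply take independent refinements below the different nodes of length $n$, since uniformity (Definition \ref{tree}(3)) requires the increments $\rho _{l,\alpha }$ to be literally identical below all nodes of a given level; this tension is exactly what the paper's proof is built to resolve. It cycles through all nodes $\sigma _{0},\ldots ,\sigma _{m}$ of the current level maintaining a single condition $U_{n+1,i}$: at each step it transfers $U_{n+1,i}$ over $S(\sigma _{i+1})$ (by Lemma \ref{fulllem} this is again a condition below $P$ sitting above that node) and then applies the uniformized density function \emph{afresh} to get $U_{n+1,i+1}\Vdash \exists \bar{X}_{n+1}\mathcal{F}_{n+1}$. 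So every node receives its own application of the hypothesis, and transfer is used only to relocate trees, never to carry forcing across. At the end, the transfer of the final $U_{n+1,m}$ back over $S(\sigma _{i})$ is a subtree of $U_{n+1,i}$ (Lemma \ref{fulllem} plus Proposition \ref{trans}), so its paths are among those of $U_{n+1,i}$ and the forcing secured at node $\sigma _{i}$ is preserved---path-shrinking, which is legitimate. If you replace your transfer-and-graft step by this round-robin refinement, the rest of your argument (the limit tree, the verification of Definition \ref{tree} for it, and the boundedness of the $\zeta _{n}$) goes through; one small further point is that the witnessing terms then differ from node to node, so to force $\exists \bar{X}_{n}^{\zeta }\mathcal{F}_{n}$ with a single term one amalgamates the finitely many terms at a level by a definition by cases on the relevant initial segment of $\mathcal{G}$, a step your "uniform witness" remark would otherwise conceal.
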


Now what goes wrong with an attempted proof of this theorem if we use some
standard type of sequential representations $\Theta _{i}$ for $\mathcal{L}%
_{i}$ and the associated trees? When we try to prove the fusion lemma, we
thin out our given tree (above each node successively) and get conditions
that are trees for $\Theta _{i}$ for larger and larger $i$ as we treat each $%
\mathcal{F}_{n}$ in turn. As the trees are of different shapes for each $i$
and the representations not nicely nested, there is no way to combine (fuse)
the subtrees into the single condition $Q$ required. The first crucial
property of our representation is that it allows us to use trees that are
essentially (i.e.\ eventually) of the same shape with splittings for a
cofinal segment of the $\Theta _{i}$. We now turn to the basic lemma that
conveys the other technical facts about our notion of forcing that enables
us to overcome this problem. It was for this application that our lattice
representation theorem (Theorem \ref{repthm}) was designed.

\begin{proposition}
\label{refine}If $\langle T_{1},\mathcal{K}_{1}\rangle \leq _{\mathcal{P}}$ $%
\langle T_{0},\mathcal{K}_{0}\rangle $ then $\langle T_{1},\mathcal{K}%
_{0}\rangle \leq _{\mathcal{P}}$ $\langle T_{0},\mathcal{K}_{0}\rangle $.
Moreover, we can get an $S$ with $[S]\subseteq \lbrack T_{1}]$ such that $%
k(S)=k(T_{0})$ and $\langle S,\mathcal{K}_{0}\rangle \leq _{\mathcal{P}}$ $%
\langle T_{0},\mathcal{K}_{0}\rangle $. Thus, if $\mathcal{F}$ is a ranked
formula, $P$ is a condition, $Q\leq P$ and $Q\Vdash \mathcal{F}$, then there
is an $R\leq P$ such that $R\Vdash \mathcal{F}$, $k(R)=k(P)$ and $K(R)=K(P)$.
\end{proposition}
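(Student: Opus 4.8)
The plan is to split the statement into its three assertions and dispatch them by unwinding the definitions; the only substantive work is the middle (``moreover'') claim.

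For the first assertion I would simply unpack Definition \ref{defforcing}: the hypothesis $\langle T_{1},\mathcal{K}_{1}\rangle \leq _{\mathcal{P}}\langle T_{0},\mathcal{K}_{0}\rangle $ means exactly $T_{1}\subseteq _{\mathcal{K}_{0}}T_{0}$ together with $\mathcal{K}_{1}\supseteq \mathcal{K}_{0}$. The tree relation $T_{1}\subseteq _{\mathcal{K}_{0}}T_{0}$ is precisely what is demanded for $\langle T_{1},\mathcal{K}_{0}\rangle \leq _{\mathcal{P}}\langle T_{0},\mathcal{K}_{0}\rangle $, and $\mathcal{K}_{0}\supseteq \mathcal{K}_{0}$ is trivial; the only point to check is that $\langle T_{1},\mathcal{K}_{0}\rangle $ is a legitimate condition, i.e.\ that $\mathcal{K}_{0}$ is a finite slsl of $\mathcal{K}\cap \mathcal{L}_{k(T_{1})}$. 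This holds because $\langle T_{0},\mathcal{K}_{0}\rangle $ is a condition (so $\mathcal{K}_{0}\subseteq \mathcal{L}_{k(T_{0})}$) and $k(T_{1})\geq k(T_{0})$ forces $\mathcal{L}_{k(T_{0})}\subseteq \mathcal{L}_{k(T_{1})}$ by the nesting of the $\mathcal{L}_{i}$ in Theorem \ref{repthm}.

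The heart of the matter is producing $S$ with $[S]\subseteq \lbrack T_{1}]$, $k(S)=k(T_{0})$ and $\langle S,\mathcal{K}_{0}\rangle \leq _{\mathcal{P}}\langle T_{0},\mathcal{K}_{0}\rangle $. Writing $k_{0}=k(T_{0})\leq k_{1}=k(T_{1})$, the apparent obstacle is that a tree of index $k_{0}$ branches at level $n$ over $\Theta _{k_{0}+n}$ whereas $T_{1}$ branches over the larger alphabet $\Theta _{k_{1}+n}$, so $S$ cannot be a subtree of $T_{1}$ at all (subtrees only raise the index). The observation that unlocks the construction is that the $\Theta _{i}$ are \emph{nested} (Theorem \ref{repthm}), whence $\Theta _{k_{0}+n}\subseteq \Theta _{k_{1}+n}$ for every $n$ and the level-$n$ branching alphabet of a $k_{0}$-indexed tree is a subset of that of $T_{1}$. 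I would therefore take $S$ to be the restriction of $T_{1}$ to the coarser domain $\mathrm{dom}\,S=\{\emptyset \}\cup \bigcup _{n}\prod _{j<n}\Theta _{k_{0}+j}$; since each factor $\Theta _{k_{0}+j}\subseteq \Theta _{k_{1}+j}$ we have $\mathrm{dom}\,S\subseteq \mathrm{dom}\,T_{1}$, so $S(\sigma ):=T_{1}(\sigma )$ is well defined and inherits the Order, Nonorder and Uniformity clauses of Definition \ref{tree} verbatim (the uniform extension strings are those of $T_{1}$, merely restricted to $\alpha \in \Theta _{k_{0}+l}$). By construction $k(S)=k_{0}=k(T_{0})$, and each infinite branch of $S$ is an infinite branch of $T_{1}$, giving $[S]\subseteq \lbrack T_{1}]$ at once.

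It then remains to verify $S\subseteq _{\mathcal{K}_{0}}T_{0}$, which I would pull back along the given relation $T_{1}\subseteq _{\mathcal{K}_{0}}T_{0}$. For $\sigma \in \mathrm{dom}\,S\subseteq \mathrm{dom}\,T_{1}$ let $\tau \in \mathrm{dom}\,T_{0}$ witness $T_{1}\subseteq T_{0}$ at $\sigma $, so $S(\sigma )=T_{1}(\sigma )=T_{0}(\tau )$; for any $\alpha \in \Theta _{k_{0}+|\sigma |}\subseteq \Theta _{k_{1}+|\sigma |}$ the subtree clause for $T_{1}\subseteq T_{0}$ yields $S(\sigma \symbol{94}\alpha )=T_{1}(\sigma \symbol{94}\alpha )\supseteq T_{0}(\tau \symbol{94}\alpha )$, and, since the relevant $\alpha ,\beta $ again lie in $\Theta _{k_{0}+|\sigma |}\subseteq \Theta _{k_{1}+|\sigma |}$, the $\mathcal{K}_{0}$-preservation clause of $T_{1}\subseteq _{\mathcal{K}_{0}}T_{0}$ transfers verbatim to $S$. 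As $\mathcal{K}_{0}\subseteq \mathcal{L}_{k_{0}}=\mathcal{L}_{k(S)}$, the pair $\langle S,\mathcal{K}_{0}\rangle $ is a condition and $\langle S,\mathcal{K}_{0}\rangle \leq _{\mathcal{P}}\langle T_{0},\mathcal{K}_{0}\rangle $. The concluding sentence about ranked $\mathcal{F}$ is then a one-line corollary: given $Q=\langle T_{1},\mathcal{K}_{1}\rangle \leq P=\langle T_{0},\mathcal{K}_{0}\rangle $ with $Q\Vdash \mathcal{F}$, apply the above to obtain $S$ and set $R=\langle S,\mathcal{K}_{0}\rangle $. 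Because the forcing of a ranked formula depends only on the paths of the tree component ($P\Vdash \mathcal{F}\Leftrightarrow \mathcal{M}(\omega _{1}^{CK},G)\models \mathcal{F}$ for all $G\in \lbrack Tr(P)]$), the inclusion $[S]\subseteq \lbrack T_{1}]$ upgrades $Q\Vdash \mathcal{F}$ to $R\Vdash \mathcal{F}$, while $k(R)=k(T_{0})=k(P)$ and $K(R)=\mathcal{K}_{0}=K(P)$ hold by design. The main obstacle is thus entirely the middle step: recognizing that nestedness of the $\Theta _{i}$ makes the index-lowering restriction of $T_{1}$ a legal tree, and then checking that it inherits $\subseteq _{\mathcal{K}_{0}}$; once this forgetfulness fact is in hand the final conclusion is automatic.
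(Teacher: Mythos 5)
Your proof is correct and follows essentially the same route as the paper: you define $S$ exactly as the paper does, as the restriction of $T_{1}$ to the coarser domain $\bigcup_{m}\prod_{n=0}^{m}\Theta _{k_{0}+n}$, justified by the nesting $\Theta _{k_{0}+n}\subseteq \Theta _{k_{1}+n}$ from Theorem \ref{repthm}, and then derive the ranked-formula conclusion from $[S]\subseteq \lbrack T_{1}]$ and the path-based definition of forcing for ranked sentences. Your explicit verification that $S\subseteq _{\mathcal{K}_{0}}T_{0}$ fills in a step the paper dismisses as clear, but the argument is the same.
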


\begin{proof}
That $\langle T_{1},\mathcal{K}_{0}\rangle \leq _{\mathcal{P}}$ $\langle
T_{0},\mathcal{K}_{0}\rangle $ is clear from the definition of extension. To
get the desired $S$, begin with $T_{1}$. Now $k(T_{1})=k_{1}\geq
k_{0}=k(T_{0})$ and so the branchings at the $nth$ level of $T_{1}$ which
correspond to the elements of $\Theta _{k_{1}+n}$ contain the branchings in $%
T_{0}$ which correspond to the elements of $\Theta _{k_{0}+n}$ as by our
representation theorem (Theorem \ref{repthm}), $\Theta _{k_{0}+n}\subseteq
\Theta _{k_{1}+n}$. To be more precise we define $S(\emptyset
)=T_{1}(\emptyset )$ and $S(\sigma )$ for $\sigma \in $ $\prod%
\limits_{n=0}^{n=m}\Theta _{k_{0}+n}$ as $T_{1}(\sigma )$. So for every $%
\sigma $ in its domain $S(\sigma )=T_{1}(\sigma )$. It is then clear that $%
[S]\subseteq \lbrack T_{1}]$, $k(S)=k(T_{0})$ and $\langle S,\mathcal{K}%
_{0}\rangle \leq _{\mathcal{P}}$ $\langle T_{0},\mathcal{K}_{0}\rangle $ as
required. Now if $Q\leq P$ and $Q\Vdash \mathcal{F}$ with $\mathcal{F}$
ranked then this argument gives us an $R\leq P$ with $[R]\subseteq \lbrack
Q] $, $k(R)=k(P)$ and $K(R)=K(P)$. As $\mathcal{F}$ is ranked and $Q\Vdash 
\mathcal{F}$, $[R]\subseteq \lbrack Q]$ implies that $R\Vdash \mathcal{F}$
as well by the definition of forcing for ranked sentences.
\end{proof}

For those familiar with the proof of the fusion lemma in other settings, we
note how our special representation allows us to avoid changing the shape of
the trees or increasing the slsl $\mathcal{\hat{K}}$ with respect to which
we are preserving congruences. The point is that when we have some condition 
$P=\langle T_{0},\mathcal{K}_{0}\rangle $ and ask if there is any $Q=\langle
T_{1},\mathcal{K}_{1}\rangle $ extending $P$ that forces some ranked formula 
$\mathcal{F}$ we can restrict ourselves to those with $k(T_{1})=k(T_{0})$
and $\mathcal{K}_{1}=\mathcal{K}_{0}$ by Proposition \ref{refine}. This idea
is really the only new one needed to carry out the proof of the fusion lemma.

\begin{proof}[Proof of Fusion Lemma]
As the forcing relation for $\Sigma _{1}^{1}$ sentences is $\Pi _{1}^{1}$
and extension is arithmetic by definition, the relation $R\leq
Q~\&~k(R)=k(P)~\&~K(R)=K(P)~\&~R\Vdash $ $\exists \bar{X}_{n}\mathcal{F}_{n}(%
\bar{X}_{n})$ is $\Pi _{1}^{1}$ and it can be uniformized to a partial $\Pi
_{1}^{1}$ function $R(n,Q)$ by Kreisel uniformization (Sacks [1990 II.2.6]).
Our hypothesis and Proposition \ref{refine} together imply that $R$ is
defined for every $\langle n,Q\rangle $ with $Q\leq P$. We now define a tree 
$S$ by recursion so that $\langle S,\mathcal{\hat{K}}\rangle $ is the
forcing condition required in the Lemma. In fact, along with the definition
of $S(\sigma )$ by induction on the length $n$ of $\sigma $ we define an
auxiliary sequence of conditions $\langle U_{n,i},\mathcal{\hat{K}}\rangle
\leq P$ with $k(U_{n,i})=k(P)$ and $|U_{n,i}(\emptyset )|=|S(\sigma )|$
(which depends only on $n$ by uniformity) for $i\leq m(n)$ where $m(n)$ is
one less than the number of branches on $T$ at level $n$. Indeed we will
have that $\langle U_{n,i}^{T(\sigma )},\mathcal{\hat{K}}\rangle \leq P$ for
each $\sigma $ of length $n$. (Recall Definition \ref{fulldef} and Lemma \ref%
{fulllem}.) We begin with $S(\emptyset )=T(\emptyset )$. Suppose we have
defined $S(\sigma )$ for all $\sigma $ with length $n$ and have the
corresponding $U_{n,m(n)}$. For any $\sigma $ of length $n$ and any
appropriate $\alpha $ we let $S(\sigma \symbol{94}a)=U_{n,m(n)}^{S(\sigma
)}(0^{n}\symbol{94}\alpha )$. Thus $S$ (up to this level) is clearly uniform
and a subtree of $T$ that preserves the congruences in $\mathcal{\hat{K}}$.
We now define the $U_{n+1,i}$ for $i<m(n+1)$ by induction on $i$. We begin
with $\sigma _{0}=\rho \symbol{94}\alpha $ (for some $\rho $ of length $n$
and some $\alpha $ in the appropriate $\Theta _{j}$). We let $U_{n+1,0}$ be
the first component of $R(n+1,(U_{n,m(n)})_{0^{n}\symbol{94}\alpha
}^{S(\sigma _{0})})$. Given $U_{n+1,i}$ and $\sigma _{i+1}$ we let $%
U_{n+1,i+1}$ be the first component of $R(n+1,U_{n+1,i}^{S(\sigma _{i})})$.
(So, by induction, $\langle U_{n+1,i},\mathcal{\hat{K}}\rangle \leq P$ and $%
\langle U_{n+1,i},\mathcal{\hat{K}}\rangle \Vdash \mathcal{F}_{n+1}$.) This
completes the inductive definition of the $U_{n+1,i}$ and so of $S$. The
crucial fact about $S$ is that for each $n$ and $\sigma $ of length $n$
every path on $S$ that contains $S(\sigma )$ is a path on $U_{n,i}$ for some 
$i$ and so makes $\mathcal{F}_{n}$ true by our choice of $U_{n,i}$ as the
first component of value of $R(n,U)$ for some $U$ with $\langle U,\mathcal{%
\hat{K}}\rangle \leq P$. Thus $\mathcal{F}_{n}$ is true of every path on $S$
for every $n$, i.e.\ $\forall n(\langle S,\mathcal{\hat{K}}\rangle \Vdash 
\mathcal{F}_{n})$.
\end{proof}

Given the fusion lemma for our forcing, the remaining development of the
basic facts about our forcing is routine and follows Sacks [1990, IV.4]. In
particular, essentially the same proofs show first that for every $P$ and $%
\mathcal{F}$ there is a $Q\leq P$ that decides $\mathcal{F}$, i.e.\ $Q\Vdash 
\mathcal{F}$ or $Q\Vdash \lnot \mathcal{F}$ and then that for any generic
filter $\mathcal{G}$ (i.e.\ one such that for every $\mathcal{F}$ there is a 
$P\in \mathcal{G}$ that decides $\mathcal{F}$) forcing equals truth in the
sense that for, every $\mathcal{F}$, $\mathcal{M}(\omega _{1}^{CK},G)\models 
\mathcal{F}\Leftrightarrow (\exists P\in \mathcal{G})\mathbf{(}P\Vdash 
\mathcal{F})$.

\begin{remark}
One can simplify Sacks's inductive proof that forcing equals truth by
constructing a generic sequence $P_{i}$ of conditions such that each
sentence $\mathcal{F}$ of our language is decided by some $P_{i}$ with the
sentences arranged so that both $\mathcal{F}$ and $\mathcal{H}$ are decided
before deciding $\mathcal{F~}\&~\mathcal{H}$. This would allow one to avoid
having to prove an analog of Sacks's Proposition IV.4.6 as the case of a
conjunction in Sacks's inductive proof that forcing equals truth is now
immediate.
\end{remark}

In addition to meeting the dense sets of conditions deciding each $\mathcal{F%
}$, we explicitly meet certain other sets of conditions to assure that we
can define our embedding on all of $\mathcal{K}$ and show that it is a
one-one map onto the hyperdegrees below the generic $G=G_{1}^{Id}$.

\section{Initial segment verifications\label{versec}}

To simplify our notation for the sets hyperarithmetic in $G$, i.e.\ those in 
$\mathcal{M}(\omega _{1}^{CK},G)$, and to emphasize the analogy to the
Turing degree constructions of initial segments, we number the terms $\hat{x}%
\mathcal{H}$ of our language by ordinals $\delta <\omega _{1}^{CK}$ and
denote the characteristic function of the set they stand for by $\{\delta
\}^{G}$. (Note that, in contrast to the situation with Turing reductions,
these are all total functions but, of course, the list is only recursive in $%
\mathcal{O}$.) We at times use $\delta _{x}^{P}$ to name the reduction such
that $\{\delta _{x}^{P}\}^{G}=\{\delta \}^{G_{x}^{P}}$.

To assure that our embedding preserves nonorder we want to show for any $%
x\nleq y$ in $\mathcal{K}$, condition $P$ with $x,y\in K(P)$ and $\delta
<\omega _{1}^{CK}$, that $\{Q|Q\Vdash \{\delta _{y}^{P}\}^{G}\neq
G_{x}^{P}\} $ is dense below $P$. In addition we want to show that $%
\{Q|Q\Vdash \exists x\in \mathcal{K}(Q\Vdash \{\delta \}^{G}\equiv
_{h}G_{x}^{P})\}$ is also dense below any $P$ for each $\delta $. These two
results would then finish the proof or our theorem. We begin with an
auxiliary collection of dense sets that make our task much simpler. They
correspond to the total subtrees of Lerman [1983] and allow us to convert
the arguments for $\mathcal{D}_{T}$ to our setting of $\mathcal{D}_{h}$.

\begin{lemma}[Totality]
\label{tot}For each condition $P$ and $\delta <\omega _{1}^{CK}$ there is a $%
Q\leq P$ with $k(Q)=k(P)$ and $K(Q)=K(P)$ such that for every $x$ and $%
\sigma $ of length $x$ there is an $n\in \{0,1\}$ for which $Q_{\sigma
}\Vdash \{\delta \}^{G}(x)=\underline{n}$. Moreover, the map $q$ taking $%
\langle x,\sigma \rangle $ to this $n$ is hyperarithmetic.
\end{lemma}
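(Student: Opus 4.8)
The plan is to mimic the construction of total subtrees in Lerman [1983], adapting the bookkeeping of the proof of the Fusion Lemma (Lemma \ref{fusion}) to our forcing. The first step is to isolate the right density fact. Fix the ranked sentence $\{\delta\}^{G}(x)=\underline{0}$. By the density of deciding (established following Sacks [1990, IV.4] just before this section), any $Q\leq P$ has a refinement deciding it; and since $\{\delta\}^{G}$ is the characteristic function of a set in $\mathcal{M}(\omega_{1}^{CK},G)$, hence total and two-valued, forcing $\lnot(\{\delta\}^{G}(x)=\underline{0})$ is, for ranked sentences (where forcing is truth on every path), the same as forcing $\{\delta\}^{G}(x)=\underline{1}$. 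Thus below every $Q\leq P$ there is a refinement forcing $\{\delta\}^{G}(x)=\underline{n}$ for some $n\in\{0,1\}$, and by Proposition \ref{refine} (applied to this ranked sentence) we may take that refinement $R$ with $k(R)=k(P)$ and $K(R)=K(P)$. The predicate ``$R\leq Q~\&~k(R)=k(P)~\&~K(R)=K(P)~\&~\exists n\in\{0,1\}\,(R\Vdash\{\delta\}^{G}(x)=\underline{n})$'' is $\Pi_{1}^{1}$, so by Kreisel uniformization (Sacks [1990, II.2.6]) it yields a partial $\Pi_{1}^{1}$ function $R(x,Q)$ defined on all $Q\leq P$, from whose value the forced $n$ is read off by the ($\Pi_{1}^{1}$) forcing relation.

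Next I would build $S$ by recursion on levels exactly as in the proof of Lemma \ref{fusion}, replacing the role of $\mathcal{F}_{n}$ at level $n$ by the task of deciding $\{\delta\}^{G}(x)$ at the argument $x=|\sigma|$ below each node $\sigma$ of length $x$. Starting from $S(\emptyset)=Tr(P)(\emptyset)$, suppose $S$ is defined through level $x$ with the corresponding auxiliary condition; enumerate the finitely many nodes $\sigma_{0},\dots,\sigma_{m(x)}$ of length $x$ and process them in turn using transfer trees (Definition \ref{fulldef}) and Lemma \ref{fulllem}, setting each successive auxiliary condition to be the first component of $R$ applied to the previous one transferred over $Tr(P)(\sigma_{i})$. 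Each application keeps $k=k(P)$ and $K=K(P)$, and because successive auxiliary conditions are refinements of one another, the common branching shape obtained at the end of level $x$, when transferred over any $Tr(P)(\sigma_{i})$, is a subtree that still decides $\{\delta\}^{G}(x)$ below $\sigma_{i}$: the decision made at $\sigma_{i}$ persists under the further thinning coming from the later nodes, since a decided value on all paths of a tree holds on every subtree. Reassembling these transferred shapes uniformly (just as $S(\sigma\symbol{94}\alpha)$ is defined from $U_{n,m(n)}^{S(\sigma)}$ in the Fusion proof) produces the next level of a uniform subtree $S\subseteq_{K(P)}Tr(P)$, and setting $Q=\langle S,K(P)\rangle$ gives $Q\leq P$ with $k(Q)=k(P)$, $K(Q)=K(P)$ and $Q_{\sigma}\Vdash\{\delta\}^{G}(|\sigma|)=\underline{n_{\sigma}}$ for every $\sigma$.

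Finally, to see that $q:\langle x,\sigma\rangle\mapsto n_{\sigma}$ is hyperarithmetic, I would observe that the entire construction is a hyperarithmetic recursion: the function $R$ is a total-on-$P$ restriction of a $\Pi_{1}^{1}$ function, the decided values are extracted by the $\Pi_{1}^{1}$ forcing relation for $\Sigma_{1}^{1}$ (and ranked) sentences, and, exactly as in the ``moreover'' clause of Lemma \ref{fusion}, all the ranks involved stay below a single $\zeta<\omega_{1}^{CK}$, so the recursion and the readout of $n_{\sigma}$ are bounded strictly below $\omega_{1}^{CK}$ and hence hyperarithmetic. The main obstacle is the second paragraph: coordinating the per-node refinements across all $\sigma$ of a common length so as to preserve both tree uniformity and $k(P),K(P)$ while allowing the forced values $n_{\sigma}$ to differ from node to node. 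This is precisely where the transfer-tree mechanism of the Fusion Lemma, together with Proposition \ref{refine}, does the work; once one checks that a value decided on a subtree is inherited by all its further subtrees, the differing values cause no difficulty and the argument goes through as in the Turing-degree case.
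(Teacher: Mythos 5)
Your proposal is correct and follows essentially the same route as the paper: establish that deciding the value of $\{\delta\}^{G}(x)$ is dense (the paper argues by noting that refinements forcing $\{\delta\}^{G}(x)\neq\underline{0}$ and $\neq\underline{1}$ would contradict totality, which is equivalent to your ranked-sentence argument), then run the fusion-lemma construction with the Kreisel-uniformized $\Pi_{1}^{1}$ operator applied node by node at level $x$, using Proposition \ref{refine} to keep $k$ and $K$ fixed. The only slip is notational---the transfers should be over the stems $S(\sigma_{i})$ of the subtree being built rather than over $Tr(P)(\sigma_{i})$, exactly as in the fusion proof you cite---and your observation that per-node decisions persist under later thinning (since forcing a ranked sentence is truth on all paths) is precisely what makes the varying values $n_{\sigma}$ harmless, as in the paper.
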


\begin{proof}
For any $Q\leq P$ and any $m$ there is clearly an $R\leq Q$ and an $n\in
\{0,1\}$ for which $R\Vdash \{\delta \}^{G}=\underline{n}$. (Otherwise by
the density of deciding formulas we could find $R_{1}\leq R_{0}\leq Q$ such
that $R_{n}\Vdash \{\delta \}^{\mathcal{G}}(\underline{m})\neq \underline{n}$
(for $n=0,1$) contradicting one of the basic facts about forcing.) Now
applying the construction of the fusion lemma, with the $\Pi _{1}^{1}$
operator taking $m,Q$ to such an $R$ (and $n$) for the nodes at level $m$,
gives the desired condition which, as usual, has by construction the same $k$
and $K$ values as $P$.
\end{proof}

\begin{proposition}[Diagonalization]
\label{diag}For any $x\nleq y$ in $\mathcal{K}$, $\delta <\omega _{1}^{CK}$
and $P$ in our generic filter with $x,y\in K(P)$ there is a $Q\leq P$ such
that $Q\Vdash \{\delta _{y}^{P}\}^{G}\neq G_{x}^{P}$.
\end{proposition}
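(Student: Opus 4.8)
The plan is to exploit the differentiation property of usl representations together with the congruence-preservation built into our subtree relation. Since $x\nleq y$ in $\mathcal{K}$, the differentiation property (Definition \ref{uslrepdef}(2)) guarantees the existence of $\gamma,\delta_*\in\Theta$ with $\gamma\equiv_y\delta_*$ but $\gamma\not\equiv_x\delta_*$. The idea is that congruence modulo $y$ means that, along a $y$-splitting, the values $G_y^P$ agree while the values $G_x^P$ differ; this is exactly what lets us diagonalize against a $y$-computation trying to produce $G_x^P$. First I would invoke the Totality Lemma (Lemma \ref{tot}) applied to the reduction $\delta_y^P$ to pass to a condition $Q_0\le P$ with $k(Q_0)=k(P)$ and $K(Q_0)=K(P)$ on which $\{\delta_y^P\}^G$ is forced total: for every $\sigma$ of length $x$ there is a decided value $q(\langle x,\sigma\rangle)$, and this map is hyperarithmetic.

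Given totality, the key step is to locate a node on $Q_0$ at which I can split into two extensions that agree on their $y$-values but disagree on the $x$-value the computation is forced to output. Concretely, since $y\in K(Q_0)$, the subtree relation $\subseteq_{K(Q_0)}$ preserves congruences modulo $y$, so if two branches $\sigma,\tau$ from a common node have $\alpha\equiv_y\beta$ at the branching step, then $G_y$ takes the same value along both, and hence the forced value of $\{\delta_y^P\}^G$ at the relevant argument is the same number $n$ on both branches. Meanwhile, by the differentiation property I can choose the branching so that the $x$-values $\alpha(x)\ne\beta(x)$ differ. Thus at least one of the two branches, say passing through $Q_{0,\sigma}$, has $G_x^P$ taking a value at that coordinate different from the common forced output $n$ of $\{\delta_y^P\}^G$. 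I would then set $Q=Q_{0,\sigma}$ (using Lemma \ref{fulllem} to see $Q\le Q_0\le P$), which forces $\{\delta_y^P\}^G\ne G_x^P$ at that coordinate, and hence forces the desired inequality.

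The main obstacle will be arranging that the differentiating pair $\gamma,\delta_*$ actually occurs as a genuine $y$-congruent splitting on the relevant tree $Q_0$ in a way compatible with the forced total computation. The differentiation property supplies such a pair somewhere in $\Theta$, but I need a pair at the appropriate level $\Theta_{k(Q_0)+\ell}$ that survives as an honest splitting on $Q_0$ while the computation $\{\delta_y^P\}^G$ has already committed to its output there. This is where the congruence-preservation of $\subseteq_{K(P)}$ does the work: because $y\in K(P)$ is preserved, the $y$-values—and therefore the forced output of the $y$-reduction—are constant across the $y$-congruent branches, whereas the $x$-values are free to differ since $x$ need not be preserved below $P$ (indeed this is the whole point of decoupling $\mathcal{K}$ from $\mathcal{L}$). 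I would carry out the totality reduction first precisely so that the output $n$ is already pinned down before I choose which of the two $x$-distinct branches to descend into, guaranteeing I can always pick the branch on which $G_x^P$ disagrees with $n$.
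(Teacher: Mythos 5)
Your overall strategy matches the paper's: apply the Totality Lemma (Lemma \ref{tot}) to $\delta _{y}^{P}$, use differentiation to pick $\alpha _{0}\equiv _{y}\alpha _{1}$ with $\alpha _{0}\not\equiv _{x}\alpha _{1}$, and use congruence preservation modulo $y\in K(P)$ to argue the $y$-computation's output is branch-independent, then pick the branch whose $x$-value disagrees. (Your worry about finding the differentiating pair at the right level is not actually an obstacle: $x,y\in K(P)\subseteq \mathcal{L}_{k(P)}$ and $\Theta _{k(P)}\upharpoonright \mathcal{L}_{k(P)}$ is an usl representation by Theorem \ref{repthm}(1), and the $\Theta _{i}$ are nested, so the pair exists already in $\Theta _{k(P)}$ and persists at every level.) However, there is a genuine gap at the central step. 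You assert that at the branching node the forced value of $\{\delta _{y}^{P}\}^{G}$ \textquotedblleft at the relevant argument is the same number $n$ on both branches.\textquotedblright\ It is not forced there at all: if $Tr(Q_{0})(\emptyset )=Tr(P)(\sigma )$, then branching by $\alpha _{i}$ determines $G_{x}^{P}(|\sigma |)=\alpha _{i}(x)$, but Lemma \ref{tot} decides $\{\delta _{y}^{P}\}^{G}(|\sigma |)$ only at nodes of \emph{length} $|\sigma |$ in the domain of $Q_{0}$, and distinct nodes of that length may decide different values (if they never did, there would be nothing for the splitting-tree machinery to do). Consequently your final move, taking $Q=Q_{0,\sigma }$ one step past the branching, does not force the inequality: different paths through it can realize different computation values at the diagonalization coordinate, some possibly equal to $\alpha _{i}(x)$. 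Your proposed remedy --- doing the totality reduction first so that \textquotedblleft the output $n$ is already pinned down before I choose\textquotedblright\ --- does not repair this, since totality pins the value down only at depth equal to the argument, not at the branching node.

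The missing device is the matched descent the paper performs: replace the one-element branches by $\beta _{i}=(\alpha _{i})^{|\sigma |}$, the string of $|\sigma |$ many copies of $\alpha _{i}$ (any common choice of continuation, coordinatewise congruent modulo $y$ across the two branches, would do), so as to reach domain depth $|\sigma |$. Each $Q_{\beta _{i}}$ still forces $G_{x}^{P}(|\sigma |)=\alpha _{i}(x)$, and now each genuinely decides $\{\delta _{y}^{P}\}^{G}(|\sigma |)$. Since $\beta _{0}\equiv _{y}\beta _{1}$ coordinatewise, $y\in K(P)$ and $Tr(Q_{0})\subseteq _{K(P)}Tr(P)$, the uniformity of the trees transfers any path $G$ through $Q_{\beta _{0}}$ to a path $G^{\prime }$ through $Q_{\beta _{1}}$ whose signature on $Tr(P)$ is congruent to that of $G$ modulo $y$, i.e.\ with $G_{y}^{P}=(G^{\prime })_{y}^{P}$ identically; hence the two decided values coincide, say both equal $n$, and since $\alpha _{0}(x)\neq \alpha _{1}(x)$ you may choose $i$ with $\alpha _{i}(x)\neq n$ and set $Q=Q_{\beta _{i}}$. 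With this insertion your argument becomes exactly the paper's proof.
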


\begin{proof}
Begin by taking $Q\leq P$ as in Lemma \ref{tot} for $\delta _{y}^{P}$. We
then choose any $\alpha _{0},\alpha _{1}\in \Theta _{k(P)}$ such that $%
\alpha _{0}\equiv _{y}\alpha _{1}$ but $\alpha _{0}\not\equiv _{x}\alpha
_{1} $. Such $\alpha _{0}$ and $\alpha _{1}$ exist by the differentiation
property of usl representations. Suppose $Tr(Q)(\emptyset )=Tr(P)(\sigma )$
and so $Tr(Q)(\alpha _{i})\supseteq Tr(P)(\sigma \symbol{94}\alpha _{i})$
for $i\in \{0,1\}$. Now clearly any condition $Q_{i}\leq Q_{\alpha _{i}}$
forces $G_{x}^{P}(|\sigma |)=\alpha _{i}(x)$. Let $\beta _{i}=(\alpha
_{i})^{|\sigma |},$ i.e.\ the concatenation of $|\sigma |$ many copies of $%
\alpha _{i}$. Consider then the conditions $Q_{\beta _{i}}$. They still
force $G_{x}^{P}(|\sigma |)=\alpha _{i}(x)$. On the other hand by Lemma \ref%
{tot} they each force a value for $\{\delta _{y}^{P}\}^{G}$ at $|\sigma |$.

As the $\beta _{i}$, for $i=0,1$, are congruent modulo $y$ and $y\in K(P)$,
the initial segments of $G_{y}^{P}$ that $Q_{\beta _{i}}(\emptyset )$
determine are congruent modulo $y$ as well. If $G$ is any path through $%
Q_{\beta _{0}}$ then (by uniformity of $Tr(Q)$) changing its initial segment 
$Q_{\beta _{0}}(\emptyset )$ to $Q_{\beta _{1}}(\emptyset )$ produces a path 
$G^{\prime }$ through $Q_{\beta _{1}}$ and so a corresponding one through $%
Tr(P)$ that is congruent to $G^{P}$ modulo $y$. As the value of $\{\delta
\}^{G_{y}^{P}}=\{\delta _{y}^{P}\}^{G}$ is determined by $G_{y}^{P}$ the two
conditions must force the same value for $\{\delta \}^{G_{y}^{P}}(|\sigma |)$%
. This value must be different from one of $\alpha _{i}(x)$ $=0,1$. Thus one
of the $Q_{\beta _{i}}$ forces $\{\delta \}^{G_{y}^{P}}=\{\delta
_{y}^{P}\}^{G}\neq G_{x}^{P}$ as required.
\end{proof}

We turn now to the requirement that the image of $\mathcal{K}$ under our
embedding form an initial segment of $\mathcal{D}_{h}$. This argument is
somewhat more complicated than those above and uses both the meet and
homogeneity interpolants. Still, given Lemma \ref{tot}, we present an
argument that would work for $\mathcal{D}_{T}$ as well. In that setting it
would be simpler than the existing proofs in the literature because of our
changes to the definition of the homogeneity interpolants.

We begin with the notion of a $\delta $-splitting and a lemma about such
splittings.

\begin{definition}
\label{defsplit}Given a $P=\langle T,\mathcal{\hat{K}}\rangle $ in our
generic filter with $w\in \mathcal{\hat{K}}$, a reduction $\delta $, and a
condition $Q\leq P$ and function $q$ both as in Proposition \ref{tot}, we
say that $\sigma $ and $\tau $ (of the same length) are a $\delta $%
-splitting (or $\delta $-split) on $Q$ (modulo $w$) if ($\sigma \equiv
_{w}\tau $ and) there is an $n\leq |\sigma |$ such that $q(n,\sigma
\upharpoonright n)\neq q(n,\tau \upharpoonright n)$. (So for any paths $%
G_{0} $ and $G_{1}$ on $Q$ extending $Tr(Q)(\sigma )$ and $Tr(Q)(\tau )$,
respectively, $\{\delta \}^{G_{0}}(n)\neq \{\delta \}^{G_{1}}(n)$.) If $%
R\leq Q,R(\mu )=Q(\sigma ),R(\nu )=Q(\tau )$ and $\sigma $ and $\tau $ $%
\delta $-split (modulo $w)$ on $Q$ then we also say that $\mu $ and $\nu $ $%
\delta $-split on $R$ (modulo $w$).
\end{definition}

\begin{lemma}
\label{meetsplit}Given $P$ and $Q$ as in Lemma \ref{tot}, there is a $\rho $
such that the set $Sp(\rho )=\{w\in K(P)|$ there are no $\sigma ,\tau $ that 
$\delta $-split on $Tr(Q)_{\rho }$ modulo $w\}$ is maximal. Moreover, this
maximal set is closed under meet and so has a least element $z$.
\end{lemma}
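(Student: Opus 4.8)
The plan is to view $Sp$ as a monotone map from tree nodes $\rho$ to subsets of the finite set $K(P)$, and to read the meet-closure off the meet interpolants of Theorem~\ref{repthm}(2). The first step is monotonicity: if $\rho\subseteq\rho'$ then $Sp(\rho)\subseteq Sp(\rho')$. Writing $\rho'=\rho{}^\frown\rho''$, the transfer clause of Definition~\ref{defsplit} identifies a $\delta$-split modulo $w$ between $\mu,\nu$ on $Tr(Q)_{\rho'}$ with one between $\rho''{}^\frown\mu$ and $\rho''{}^\frown\nu$ on $Tr(Q)_\rho$; hence any split on the deeper full subtree pushes down to one on the shallower, and contrapositively $w\in Sp(\rho)$ gives $w\in Sp(\rho')$. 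Informally, passing to a deeper $Tr(Q)_\rho$ only thins the tree and so can only enlarge the set of congruences it fails to split.

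Since $K(P)$ is finite, the cardinalities $|Sp(\rho)|$ are bounded by $|K(P)|$, so I would fix a $\rho$ at which $|Sp(\rho)|$ is largest. Monotonicity then yields both halves of the first assertion at once: no $Sp(\rho')$ can strictly contain $Sp(\rho)$, since it would have strictly larger cardinality, so $Sp(\rho)$ is inclusion-maximal; and $Sp(\rho')=Sp(\rho)$ for every $\rho'\supseteq\rho$, so the set in play is stable under going deeper. This stability is what will let the interpolation argument be carried out at whatever depth it needs.

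For closure under meet, take $w_1,w_2\in Sp(\rho)$ and set $z=w_1\wedge w_2$, which lies in $K(P)$ because $K(P)$ is an slsl. Suppose for contradiction $z\notin Sp(\rho)$, so some same-length $\sigma,\tau$ on $Tr(Q)_\rho$ with $\sigma\equiv_z\tau$ carry, via the totality function $q$ of Lemma~\ref{tot}, different forced values of $\{\delta\}^G$ at some coordinate; truncating at that coordinate I may assume the disagreement is at the top coordinate. Reading $\sigma\equiv_z\tau$ coordinatewise and applying the meet interpolants of Theorem~\ref{repthm}(2) at each coordinate produces strings $\Gamma_0,\Gamma_1$ linking $\sigma$ to $\tau$ by a chain $\sigma\equiv_{w_1}\Gamma_0\equiv_{w_2}\Gamma_1\equiv_{w_1}\tau$. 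Since the two ends force different values of $\{\delta\}^G$ at the top coordinate, the forced value cannot be constant along the chain, so some consecutive pair is congruent modulo $w_1$ or modulo $w_2$ while forcing different values, i.e.\ a $\delta$-split modulo $w_1$ or $w_2$; as $w_1,w_2\in Sp(\rho)$ block splits modulo $w_i$ at every depth by monotonicity, this is the desired contradiction, whence $z\in Sp(\rho)$. Finally $Sp(\rho)$ is nonempty --- it contains $1$, since $\sigma\equiv_1\tau$ forces $\sigma=\tau$ by the order property and so admits no split --- and being a finite, meet-closed subset of $K(P)$ it has $z=\bigwedge Sp(\rho)$ as its least element, which lies in $Sp(\rho)$ by the closure just established.

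The step I expect to be the main obstacle is the realization of the interpolant chain inside the tree. The interpolants of Theorem~\ref{repthm}(2) live one level up, in $\Theta_{i+1}$ rather than $\Theta_i$, so $\Gamma_0,\Gamma_1$ are not literally branchings of $Tr(Q)_\rho$ at the original level but appear one coordinate deeper; matching them to genuine nodes, and keeping $\sigma,\tau$ as nodes alongside them, uses exactly the nesting $\Theta_i\subseteq\Theta_{i+1}$ and the uniformity built into Definition~\ref{tree}, together with the totality of $q$, so that the forced value is defined and compared at the correct coordinate for every string in the chain. The delicate point is that this level shift changes which node of $Tr(Q)$ a given string corresponds to, and hence potentially the value being split on; converting the interpolant chain into a bona fide contradiction therefore requires deploying the stability $Sp(\rho')=Sp(\rho)$ together with the fact that a split modulo $w_i$ is forbidden at all depths, so that the derived disagreement lands on a tree where $w_1$ and $w_2$ are still guaranteed not to split.
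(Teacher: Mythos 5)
Your outline coincides with the paper's own proof in all its main steps: maximize $|Sp(\rho )|$ over the finite set $K(P)$, observe the stability $Sp(\rho ^{\prime })=Sp(\rho )$ for $\rho ^{\prime }\supseteq \rho $ (the paper gets the monotonicity from $Tr(Q)_{\rho ^{\prime }}\subseteq _{K(P)}Tr(Q)_{\rho }$), note that $z=w_{1}\wedge w_{2}\in K(P)$ because $K(P)$ is an slsl, and derive meet closure by interpolating a congruence chain across a hypothetical mod-$z$ split and pigeonholing on consecutive pairs; nonemptiness (via $1\in Sp(\rho )$, since $\sigma \equiv _{1}\tau $ forces $\sigma =\tau $) plus finiteness then yield the least element. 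One minor slip first: Theorem \ref{repthm}(2) supplies \emph{three} interpolants, giving the pattern $\sigma \equiv _{w_{1}}\Gamma _{0}\equiv _{w_{2}}\Gamma _{1}\equiv _{w_{1}}\Gamma _{2}\equiv _{w_{2}}\tau $, not the two-interpolant chain $\sigma \equiv _{w_{1}}\Gamma _{0}\equiv _{w_{2}}\Gamma _{1}\equiv _{w_{1}}\tau $ you wrote; the pigeonhole argument is indifferent to the length of the chain, but the shorter chain is not what the representation theorem provides.

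The substantive issue is that the obstacle you flag in your last paragraph is precisely the content of the paper's proof, and the repair you gesture at does not close it. You choose the split on $Tr(Q)_{\rho }$ first and then try to push the chain a level deeper to make room for the interpolants (which live in $\Theta _{i+1}$, one level above the branchings hosting $\sigma $ and $\tau $); as you yourself observe, this shift changes which nodes the endpoints name. Stability and the fact that $w_{i}$-splits are forbidden at every depth protect only the \emph{contradiction} side of the pigeonhole; the \emph{hypothesis} side --- that the two ends of the chain force different values --- was witnessed at the original nodes, and there is no reason the relocated endpoints still $\delta $-split, since forced values are attached to nodes, not to strings. The paper avoids this by reversing the order of quantifiers: using stability, it suffices to show there is no mod-$z$ split on the deeper full subtree $Q_{\rho \symbol{94}0}$ (as $Sp(\rho \symbol{94}0)=Sp(\rho )$), so the hypothetical split $\bar{\mu},\bar{\nu}$ is taken on $Q_{\rho \symbol{94}0}$ from the outset, with coordinates in $\Theta _{\hat{k}+n}$ for $\hat{k}=k(Q)+|\rho |+1$; the interpolant chain, padded with a fixed $0$ coordinate so that the added congruences are trivial, is then realized where the branchings range over the larger sets of the nested sequence $\Theta _{\hat{k}+n}\subseteq \Theta _{\hat{k}+n+1}$ of Theorem \ref{repthm}, and the consecutive pair that splits yields a $\delta $-split below $\rho $ congruent modulo $w_{1}$ or $w_{2}$ (the paper's $x$ or $y$), contradicting $w_{1},w_{2}\in Sp(\rho )$. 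So the missing idea is exactly: descend to the full subtree \emph{before} selecting the split, so that the endpoints of the chain are the split's own nodes and only the interpolants exploit the extra branching width; in your order of operations the endpoints' forced values are lost in the shift, and the contradiction never materializes.
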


\begin{proof}
Let $T=Tr(Q)$ and $k=k(Q)=k(P)$ and $\mathcal{\hat{K}}=K(P)=K(Q)$. As $%
\mathcal{\hat{K}}$ is finite there is clearly a $\rho $ such that $Sp(\rho )$
is maximal. Note that then $Sp(\mu )=Sp(\rho )$ for any $\mu \supseteq \rho $
as $Tr(Q)_{\mu }\subseteq _{\mathcal{\hat{K}}}Tr(Q)_{\rho }$. Consider any $%
x,y\in Sp(\rho )$ with $x\wedge y=w$. As $\mathcal{\hat{K}}\subseteq _{lsl}%
\mathcal{L}$, $w\in \mathcal{\hat{K}}$. To show that $Sp(\rho )$ is closed
under meet it suffices (by the maximality of $Sp(\rho )$) to show that there
is no $\delta $-splitting on $Q_{\rho \symbol{94}0}$ modulo $z$. Let $\hat{k}
$ denote $k(Q_{\rho \symbol{94}0})=k(Q)+|\rho |+1$. Suppose there were such
a split $\bar{\mu}$ and $\bar{\nu}$. By our definition of $Q_{\rho \symbol{94%
}0}$, $\bar{\mu},\bar{\nu}\in \prod\limits_{n=0}^{n=m}\Theta _{\hat{k}+n}$
for some $m$. The splitting in $Q_{\rho }$ at the corresponding levels,
however, have branchings for all elements of $\Theta _{\hat{k}+n+1}$. Thus,
by the existence of meet interpolants for $\Theta _{\hat{k}+n}$ in $\Theta _{%
\hat{k}+n+1}$, there are $\bar{\gamma}_{0},\bar{\gamma}_{1},\bar{\gamma}%
_{2}\in \prod\limits_{n=0}^{n=m}\Theta _{\hat{k}+n+1}$ such that for each $%
j\leq m$, the $\bar{\gamma}_{i}(j)$ for $i\in \{0,1,2\}$ are meet
interpolants for $\bar{\mu}(j)$ and $\bar{\nu}(j)$, i.e.\ $\bar{\mu}\equiv
_{x}\bar{\gamma}_{0}\equiv _{y}\bar{\gamma}_{1}\equiv _{x}\bar{\gamma}%
_{2}\equiv _{x}\bar{\nu}$. As $\bar{\mu}$ and $\bar{\nu}$ form a $\delta $%
-splitting on $Q_{\rho \symbol{94}0}$ so do one of the successive pairs such
as $\bar{\gamma}_{0},\bar{\gamma}_{1}$. But then $0\symbol{94}\bar{\gamma}%
_{0}$ and $0\symbol{94}\bar{\gamma}_{1}$ would be a $\delta $-split on $%
Q_{\rho }$ congruent modulo $y$ for a contradiction. (The situations for the
other pairs are the same but perhaps with $x$ in place of $y$.)
\end{proof}

We now build the analog of what is often called a $\delta $\emph{-splitting
tree} in the Turing degree setting. It is in the construction of these trees
that our new definition of homogeneity interpolants simplifies the
construction as compared, for example, to that of Lerman [1983, VII.3].

\begin{proposition}[Splitting trees]
\label{splittree}Given $\delta ,z,P,Q$ and $\rho $ as in Lemma \ref%
{meetsplit}, there is a condition $\langle S,\mathcal{\hat{K}}\rangle \leq
Q_{\rho }$ (with $k(S)=k(Q)=k(P)=k$) such that for any $\sigma \in \text{dom }%
S(=\text{dom }Q=\text{dom }P)$ and any $\alpha ,\beta $ in the appropriate $%
\Theta _{j}$ (actually $j=k+|\sigma |$), if $\alpha \not\equiv _{z}\beta $
then $\sigma \symbol{94}\alpha $ and $\sigma \symbol{94}\beta $ $\delta $%
-split on $S$.
\end{proposition}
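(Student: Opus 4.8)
The plan is to build $S$ by a recursion on levels, thinning $Tr(Q)_{\rho}$ so that at every node the immediate successors lying in distinct $z$-classes are forced to $\delta$-split, while never disturbing the preservation of the congruences of $\mathcal{\hat{K}}$ (which is exactly what $S\subseteq_{\mathcal{\hat{K}}}Tr(Q)_{\rho}$ requires). First I would record the characterization of $z$ coming out of Lemma~\ref{meetsplit}. Since $\equiv$ is monotone in the order (the order property), $Sp(\rho)$ is upward closed in $\mathcal{\hat{K}}$; as it is also closed under meet with least element $z$, we get $Sp(\rho)=\{w\in\mathcal{\hat{K}}\mid w\geq z\}$. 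Hence no two $z$-congruent strings ever $\delta$-split on $Tr(Q)_{\rho}$, while for every $w\in\mathcal{\hat{K}}$ with $w\not\geq z$ a $\delta$-split modulo $w$ genuinely exists. In particular the conclusion is vacuous when $z=0$, and when $\alpha\not\equiv_{z}\beta$ every $w$ in $W=\{w\in\mathcal{\hat{K}}\mid\alpha\equiv_{w}\beta\}$ satisfies $w\not\geq z$ (otherwise the order property would give $\alpha\equiv_{z}\beta$), and by the join property so does $\bigvee_{\mathcal{L}}W$.

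The core is the single-pair step. Fix a node $\sigma$ and a pair $\alpha\not\equiv_{z}\beta$ of immediate successors; I want to choose the branches of $S$ above $\sigma$ so that the two extensions $\delta$-split and so that the induced branch assignment is an $\mathcal{\hat{K}}$-homomorphism into $Tr(Q)_{\rho}$. Using the characterization of $z$ together with the meet-closure supplied by Lemma~\ref{meetsplit}, I would first produce a $\delta$-splitting pair $\beta_{0},\beta_{1}$ on $Tr(Q)_{\rho}$ that is congruent modulo every $w\in W$ (equivalently, modulo each of the finitely many maximal elements of $W$, all of which are $\not\geq z$). With such a pair in hand the homogeneity hypothesis $\forall w\in\mathcal{\hat{K}}(\alpha\equiv_{w}\beta\rightarrow\beta_{0}\equiv_{w}\beta_{1})$ holds, so Theorem~\ref{repthm}(3) (applied with $\mathcal{\hat{L}}=\mathcal{\hat{K}}$, level by level along the block separating the witnesses) supplies interpolants $\gamma_{0},\gamma_{1}$ and $\mathcal{\hat{K}}$-homomorphisms $f,g,h$ with $f:\alpha,\beta\mapsto\beta_{0},\gamma_{1}$, $g:\alpha,\beta\mapsto\gamma_{0},\gamma_{1}$ and $h:\alpha,\beta\mapsto\gamma_{0},\beta_{1}$. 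The three image pairs $(\beta_{0},\gamma_{1})$, $(\gamma_{0},\gamma_{1})$, $(\gamma_{0},\beta_{1})$ chain the split endpoints $\beta_{0},\beta_{1}$ through the interpolants, each link being a congruence-preserving homomorphic image of $\{\alpha,\beta\}$; I use $f$, $g$ and $h$ to define a single branch assignment that routes $\alpha$ to the $\beta_{0}$-side and $\beta$ to the $\beta_{1}$-side while sending every other successor to a $\gamma$-value. Because $\beta_{0},\beta_{1}$ already $\delta$-split (Definition~\ref{defsplit}, via the totality function $q$ of Lemma~\ref{tot}), the two routed extensions $\delta$-split as well, and because $f,g,h$ are $\mathcal{\hat{K}}$-homomorphisms the assignment preserves all of $\mathcal{\hat{K}}$'s congruences.

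The order-respecting form of $g$ adopted here ($g:\alpha,\beta\mapsto\gamma_{0},\gamma_{1}$ rather than $\gamma_{1},\gamma_{0}$) is precisely what lets the $f$-, $g$- and $h$-pieces be glued along their common values $\gamma_{0},\gamma_{1}$ without ever reversing the roles of $\alpha$ and $\beta$; this is the promised simplification over Lerman~[1983,~VII.3]. I would then assemble the construction: at each node process the finitely many relevant pairs in turn and iterate up the tree by recursion on levels, noting that once two extensions $\delta$-split they continue to $\delta$-split on every further subtree, so treating later pairs never destroys earlier splits. The recursion is hyperarithmetic in the data of Lemma~\ref{meetsplit}. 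Since the tree so produced is a subtree of $Tr(Q)_{\rho}$, I finish by invoking Proposition~\ref{refine} to replace it with an $S$ whose paths lie in it but with $k(S)=k(Q)=k$; because $\Theta_{k+|\sigma|}\subseteq\Theta_{k(Q)+|\rho|+|\sigma|}$, the splitting property is inherited by the retained branches, giving $\langle S,\mathcal{\hat{K}}\rangle\leq Q_{\rho}$ as required.

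The main obstacle is exactly this single-pair step: reconciling the demand that the new branch assignment force a $\delta$-split of $\alpha$ against $\beta$ with the demand that it preserve every $\mathcal{\hat{K}}$-congruence, so that $S$ remains a condition. The delicate point is obtaining a $\delta$-splitting pair at the correct congruence level---congruent modulo all of $W$ at once---which is where the meet-closure of $Sp(\rho)$ from Lemma~\ref{meetsplit} is essential, since splittings are not in general closed under joining their moduli; converting that pair into a global congruence-preserving assignment through the two/three homogeneity interpolants is the remaining technical heart, and everything else is bookkeeping.
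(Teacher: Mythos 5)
Your global architecture matches the paper's (level-by-level recursion using the totality function of Lemma \ref{tot}, a splitting pair congruent at a modulus dominating all of $W$, the order-respecting interpolants of Theorem \ref{repthm}(3), persistence of splits under refinement, and the appeal to nestedness of the $\Theta_i$), but the single-pair step as you describe it would fail in two concrete ways. First, no map supplied by Theorem \ref{repthm}(3) \emph{routes $\alpha$ to the $\beta_0$-side and $\beta$ to the $\beta_1$-side}: $f$ sends $\alpha,\beta\mapsto\beta_0,\gamma_1$, $g$ sends them to $\gamma_0,\gamma_1$, and $h$ to $\gamma_0,\beta_1$; none sends them to $\beta_0,\beta_1$, and this is unavoidable, since the paper explicitly notes that a one-function version of the interpolants is false in general. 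If you mix the maps (say, $f$ on $\alpha$'s branch and $h$ on $\beta$'s, to actually land on $\beta_0$ and $\beta_1$), you lose congruence preservation between branches handled by different maps: a third successor $\alpha'$ with $\alpha'\equiv_w\beta$ gets $f(\alpha')\equiv_w f(\beta)=\gamma_1$, while $\beta$'s branch received $\beta_1$, and $\gamma_1\equiv_w\beta_1$ is not guaranteed. So a single one of $f,g,h$ must be applied to \emph{all} successors in a given round (uniformity, Definition \ref{tree}(3), independently forces this: the appended string may depend only on the branch label, not the node, so your plan to process pairs ``at each node'' also needs to be coordinated across a whole level). But then your inference ``because $\beta_0,\beta_1$ already $\delta$-split, the two routed extensions $\delta$-split as well'' is a non sequitur: with $g$, say, the routed pair is $(\gamma_0,\gamma_1)$, whose splitting does not follow from that of $(\beta_0,\beta_1)$. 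The correct argument is a case analysis: since forced values are total, at least one of the three chained pairs $(\beta_0,\gamma_1)$, $(\gamma_0,\gamma_1)$, $(\gamma_0,\beta_1)$ must $\delta$-split (otherwise the endpoints would agree at every forced coordinate), and one chooses $f$, $g$ or $h$ accordingly.

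Second, you never arrange for the endpoints of that chain to split \emph{at the right nodes}. A $\delta$-split (Definition \ref{defsplit}) is a property of full nodes of $Q$ via $q$, not of appended strings in isolation: by the subtree definition, the branches for $\alpha$ and $\beta$ already sit above the distinct nodes $\nu_p$ and $\nu_q$ of $Q$ extending $\tau_\sigma\symbol{94}\alpha$ and $\tau_\sigma\symbol{94}\beta$, while Lemma \ref{meetsplit} yields splitting pairs only above a \emph{single} node. The paper bridges this with a preliminary maneuver your proposal omits: find $\sigma',\tau'$ splitting above $\nu_p$, congruent modulo the \emph{largest} $w\in\mathcal{L}_{k+n}$ with $\alpha\equiv_w\beta$ (one modulus suffices because the join property of the representation makes the congruencing moduli join-closed in the finite lattice $\mathcal{L}_{k+n}$ --- it is this, not the meet-closure of $Sp(\rho)$, that collapses your simultaneous requirement over the maximal elements of $W$); then compare $\nu_q\symbol{94}\tau'$ with $\nu_p\symbol{94}\sigma'$ and $\nu_p\symbol{94}\tau'$. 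If it splits with $\nu_p\symbol{94}\tau'$, extend \emph{every} successor by $\tau'$ --- identical extensions, congruences trivially preserved, no interpolants needed; otherwise $\nu_p\symbol{94}\sigma'$ and $\nu_q\symbol{94}\tau'$ split, and the interpolant case analysis above applies with correctly placed endpoints. Without this two-case step, your splitting pair has no licensed position above the two branches, and the chain argument has nothing to anchor its endpoints to.
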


\begin{proof}
We define $S(\sigma )$ (with $k(S)=k$) by induction on $|\sigma |$
beginning, of course, with $S(\emptyset )=Q_{\rho }(\emptyset )$. Suppose we
have defined $S(\sigma )=Q(\tau _{\sigma })$ for all $\sigma $ of length $n$%
. We must define $S(\sigma \symbol{94}\alpha )$ for all such $\sigma $ and
appropriate $\alpha $ as extensions $Q(\tau _{\sigma \symbol{94}\alpha })$
of $Q(\tau _{\sigma }\symbol{94}\alpha )$ obeying all the congruences in $%
\mathcal{\hat{K}}$, i.e.\ if $x\in \mathcal{\hat{K}}$ and $\alpha \equiv
_{x}\beta $ then $\tau _{\sigma \symbol{94}\alpha }\equiv _{x}\tau _{\sigma 
\symbol{94}\beta }$. We list the $\sigma $ of length $n+1$ as $\sigma _{i}%
\symbol{94}\alpha _{i}$ for $i<m=|\prod\limits_{j=0}^{j=n}\Theta _{k+j}|$
and define by induction on $r$ strings $\rho _{i,r}$ for $i<m,$ $%
r<l=m(m+1)/2 $ (the number of pairs $\{i,j\}$ with $i,j<m$). At the end of
our induction we will set $\tau _{\sigma _{i}\symbol{94}\alpha _{i}}=\tau
_{\sigma _{i}}\symbol{94}\rho _{i,0}\symbol{94}\ldots \symbol{94}\rho
_{i,l-1}$. For this to succeed it suffices to guarantee for every $i,j<m$
and $w\in \mathcal{\hat{K}}$ that $\alpha _{i}\equiv _{w}\alpha
_{j}\Rightarrow \rho _{i,r}\equiv _{w}\rho _{j,r}$ for every $r<l$ and that
if $\alpha _{i}\not\equiv _{z}\alpha _{j}$ then $\tau _{\sigma _{i}}\symbol{%
94}\rho _{i,0}\symbol{94}\ldots \symbol{94}\rho _{i,r}$ and $\tau _{\sigma
_{j}}\symbol{94}\rho _{j,0}\symbol{94}\ldots \symbol{94}\rho _{r}$ $\delta $%
-split on $Q$ where $r<l$ is (the code for) $\{i,j\}$.

By induction on $r<l$ we suppose we have $\tau _{\sigma _{i}}\symbol{94}\rho
_{i,0}\symbol{94}\ldots \symbol{94}\rho _{i,r-1}=\nu _{i}$ for all $i<m$ and
that $\{p,q\}$ is pair number $r$. If $\alpha _{p}\equiv _{z}\alpha _{q}$
there is no requirement to satisfy and we let $\rho _{i,r}=\emptyset $ for
every $i$. Otherwise, let $w$ be the largest $y\in \mathcal{L}_{k+n}$ such
that $\alpha _{p}\equiv _{y}\alpha _{q}$. (To see that there is a largest
such $y$, first note that $\mathcal{L}_{k+n}$ is a lattice as it is a finite
lsl. As $\Theta _{k+n}$ is an usl representation for $\mathcal{L}_{k+n}$, if 
$\alpha _{p}\equiv _{u,v}\alpha _{q}$ for $u,v\in \mathcal{L}_{k+n}$ then $%
\alpha _{p}\equiv _{t}\alpha _{q}$ where $t$ is the least element of $%
\mathcal{L}_{k+n}$ above both $u$ and $v$ (their join from the viewpoint of $%
\mathcal{L}_{k+n}$). Thus, there is a largest $y\ $as desired.) Of course, $%
z\nleq w$. By our choice of $z$ there are $\sigma ,\tau \in
\prod\limits_{n=0}^{n=s}\Theta _{k+n}$ such that $\nu _{p}$ extended by $%
\sigma $ and $\tau $ form a $\delta $-splitting congruent modulo $w$. (We
can find such a split on $Q_{\nu _{p}}$ by definition of $\rho $ and $z$ and
our assumption on $w$. It translates into such $\sigma $ and $\tau $.)
Consider $\nu _{q}\symbol{94}\tau $. It must form a $\delta $-splitting on $%
Q $ with one of $\nu _{p}\symbol{94}\sigma $ and $\nu _{p}\symbol{94}\tau $
by the basic properties of $Q$. If it splits with the latter string then we
can set $\rho _{i,r+1}=\tau $ and clearly fulfill the requirements for this
pair $\{p,q\}$ both for congruence modulo $w$ (as all new extensions are
identical) and $\delta $-splitting. Thus we assume that $\nu _{p}\symbol{94}%
\sigma $ and $\nu _{q}\symbol{94}\tau $ $\delta $-split on $Q$. We now use
our homogeneity interpolants.

We know that $w$ is the largest $y\in \mathcal{L}_{n+k}$ such that $\alpha
_{p}\equiv _{y}\alpha _{q}$ and that $\sigma \equiv _{w}\tau $. Thus for any 
$z\in \mathcal{\hat{K}}\subseteq \mathcal{L}_{k+n}$ if $\alpha _{p}\equiv
_{z}\alpha _{q}$ then $z\leq w$ and so $\sigma \equiv _{z}\tau $. By Theorem %
\ref{repthm}(3) we can now find homogeneity interpolants $\gamma
_{0}(s),\gamma _{1}(s)$ in $\Theta _{k+s+1}$ and associated $\mathcal{\hat{K}%
}$-homomorphisms $f_{s}:\alpha _{p},\alpha _{q}\mapsto \sigma (s),\gamma
_{1}(s)$, $g_{s}:\alpha _{p},\alpha _{q}\mapsto \gamma _{0}(s),\gamma
_{1}(s) $ and $h_{s}:\alpha _{p},\alpha _{q}\mapsto \gamma _{0}(s),\tau (s)$
for each $s<|\sigma |=|\tau |$. (We let $\alpha _{0}=\alpha _{p}$, $\alpha
_{1}=\alpha _{q}$, $\beta _{0}=\sigma (s)$, $\beta _{1}=\tau (s)$, $\mathcal{%
\hat{L}}=\mathcal{\hat{K}}$ and $i=k+n$ in the Theorem.) As $\nu _{p}\symbol{%
94}\sigma $ and $\nu _{q}\symbol{94}\tau $ $\delta $-split on $Q$ one of the
pairs $\nu _{p}\symbol{94}\sigma ,\nu _{q}\symbol{94}\bar{\gamma}_{1}$; $\nu
_{p}\symbol{94}\bar{\gamma}_{0},\nu _{q}\symbol{94}\bar{\gamma}_{1}$ and $%
\nu _{p}\symbol{94}\bar{\gamma}_{0},\nu _{q}\symbol{94}\tau $ must also $%
\delta $-split on $Q$. Suppose for the sake of definiteness it is the second
pair $\nu _{p}\symbol{94}\bar{\gamma}_{0},\nu _{q}\symbol{94}\bar{\gamma}%
_{1} $. In this case we let $\rho _{i,r+1}(s)=g_{s}(\alpha _{i})$ for every $%
i$ and $s$. We use $f_{s}$ or $h_{s}$ in place of $g_{s}$ if the $\delta $%
-splitting pairs are $\nu _{p}\symbol{94}\sigma ,\nu _{q}\symbol{94}\bar{%
\gamma}_{1}$ or $\nu _{p}\symbol{94}\bar{\gamma}_{0},\nu _{q}\symbol{94}\tau 
$, respectively. By the homomorphism properties of the interpolants these
extensions preserve all the congruences in $\mathcal{\hat{K}}$ between any $%
\alpha _{i}$ and $\alpha _{j}$ as required to complete the induction and our
construction of $\delta $-splitting trees .
\end{proof}

We now conclude the proof that $\{Q|Q\Vdash \exists x\in \mathcal{K}(Q\Vdash
\{\delta \}^{G}\equiv _{h}G_{x}^{Q})\}$ is dense below $P$ for each $\delta $
and $P\in \mathcal{G}$. This will show that our embedding maps onto an
initial segment of $\mathcal{D}_{h}$ by proving that the $\delta $-splitting
tree $S$ forces that $\{\delta \}^{G}\equiv _{h}G_{z}^{S}$ (for the $z$ of
the Lemma). The required fact is generally called the Computation Lemma. We
supply the standard proof.

\begin{lemma}[Computation Lemma]
\label{comp}Given $\delta ,z,P,Q,\rho $ and $S$ as in Lemma \ref{splittree}, 
$S\Vdash \{\delta \}^{G}\equiv _{h}G_{z}^{S}$.
\end{lemma}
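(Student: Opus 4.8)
The plan is to prove the two-directional hyperarithmetic equivalence $S \Vdash \{\delta\}^G \equiv_h G_z^S$ by separately forcing the two reductions $G_z^S \leq_h \{\delta\}^G$ and $\{\delta\}^G \leq_h G_z^S$. The easier direction is the first: $G_z^S$ is hyperarithmetic in $\{\delta\}^G$. This should follow from the splitting property of $S$ established in Proposition~\ref{splittree}. The key observation is that for any $\sigma$ and any $\alpha,\beta$ in the appropriate $\Theta_j$, if $\alpha \not\equiv_z \beta$ then $\sigma{}^\wedge\alpha$ and $\sigma{}^\wedge\beta$ $\delta$-split on $S$; equivalently, if the two successors fail to $\delta$-split, they must be $z$-congruent. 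So given the values $\{\delta\}^G(n)$ along the path $G$ follows on $S$, one can, working recursively in the trees (which are hyperarithmetic) together with the function $q$ from Lemma~\ref{tot}, identify at each level the $z$-equivalence class of the branch $G$ takes, and hence recover $G_z^S(n) = \alpha_n(z)$. I would make this precise by a hyperarithmetic recursion along the levels of $S$.

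\textbf{The harder direction.} The reverse reduction $\{\delta\}^G \leq_h G_z^S$ is the substantive content and is where the meet structure captured by $z$ (Lemma~\ref{meetsplit}) does its work. The plan is to show that $G_z^S$ suffices to compute, for each level $n$, the value $\{\delta\}^G(n)$. The idea is the standard computation-lemma recovery: having determined the node $S(\langle\alpha_0,\dots,\alpha_{n-1}\rangle)$ that $G$ passes through (or rather, its $z$-equivalence-class data), I would search, recursively in $S$ and $G_z^S$, for any extension $\tau$ at the next level with $\tau \equiv_z (\text{the true branch})$ and read off the forced value $q(n,\tau)$. The point is that by the maximality and meet-closure from Lemma~\ref{meetsplit} applied to $Tr(Q)_\rho$, there is \emph{no} $\delta$-split modulo $z$ anywhere on $S$: any two $z$-congruent nodes give the same value of $\{\delta\}^G$ at the relevant argument. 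Hence the value found by the search is independent of which $z$-congruent branch is used, and equals the true value along $G$. I would phrase this as: the $z$-equivalence class of $G$'s branch, which is exactly what $G_z^S$ encodes, determines $\{\delta\}^G$ uniquely.

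\textbf{Assembling the forcing statement.} Because $S$ is a condition and the two reductions just described are uniform and hyperarithmetic, I would argue that every path $G$ through $S$ satisfies both $G_z^S \leq_h \{\delta\}^G$ and $\{\delta\}^G \leq_h G_z^S$, which is precisely what it means for $S$ to force the ranked (or appropriately bounded) statement $\{\delta\}^G \equiv_h G_z^S$. Here I would invoke the forcing definition for such sentences, noting that the reductions use only the hyperarithmetic data $\langle\Theta_i\rangle$, $S$, and $q$, all of which are available below $\omega_1^{CK}$, so the witnessing ordinal rank can be bounded and the statement is indeed forced by $S$ rather than merely true along generic paths.

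\textbf{Anticipated obstacle.} The main obstacle is the reverse direction, specifically verifying that the search recovering $\{\delta\}^G(n)$ from $G_z^S$ is genuinely well-defined, i.e.\ that \emph{no} $\delta$-split modulo $z$ survives on $S$. This is exactly the role of $z$ being the \emph{least} element of the maximal meet-closed set $Sp(\rho)$ from Lemma~\ref{meetsplit}: I must check that the splitting-tree construction of Proposition~\ref{splittree} refines $Q_\rho$ in a way that does not \emph{create} new $z$-splits while forcing $z$-incongruent branches \emph{to} split. The delicate point is confirming that the homogeneity-interpolant surgery used to build $S$ preserves the ``no $\delta$-split modulo $z$'' property inherited from $Q_\rho$, so that $G_z^S$ really does determine $\{\delta\}^G$ unambiguously; establishing that consistency between the two lemmas is the crux of the argument.
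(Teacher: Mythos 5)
Your proposal is correct and follows essentially the same route as the paper's proof: one direction reads off $\{\delta\}^G(n)$ from the $z$-congruence class of $G$'s branch, using that no $\delta$-split modulo $z$ exists (Lemma \ref{meetsplit}), while the other recovers the $z$-class of the branch from $\{\delta\}^G\upharpoonright n$, using that $z$-incongruent nodes $\delta$-split by the construction of $S$ (Proposition \ref{splittree}), with both searches hyperarithmetic in $S$ and $q$. The one \textquotedblleft obstacle\textquotedblright\ you flag is in fact immediate from the definitions: forced values on $S$ are inherited from $Q$ via $q$, so a subtree can never create new $\delta$-splits, and since $S\subseteq _{\mathcal{\hat{K}}}Q_{\rho }$ with $z\in \mathcal{\hat{K}}$ (Definition \ref{prescong}), $z$-congruent nodes of $S$ correspond to $z$-congruent nodes of $Q_{\rho }$, which do not split.
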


\begin{proof}
Let $G\in \lbrack S]$. We first show that $\{\delta \}^{G}\leq _{h}G_{z}^{S}$%
. Consider any $n$. Using $G_{z}^{S}$ we can find all the $\sigma \in \text{%
dom}S$ of length $n$ such that $\sigma (l)=G_{z}^{S}(l)$ for every $l\leq n$%
. All of these $\sigma $ are congruent modulo $z$ and so all $S_{\sigma }$
force the same value for $\{\delta \}^{G}$ at $n$. As $S(\sigma )$ is an
initial segment of $G$ for one of these $\sigma $, this value must be $%
\{\delta \}^{G}(n)$. We next argue that $G_{z}^{S}\leq _{h}\{\delta \}^{G}$.
Consider all $\sigma ,\tau \in \text{dom }S$ of length $n$. If $\sigma
\not\equiv _{z}\tau $ then, by the construction of $S$, $S_{\sigma }$ and $%
S_{\tau }$ force different values for $\{\delta \}^{G}$ at some $l<n$. Thus
using $\{\delta \}^{G}\upharpoonright n$ we can find the unique congruence
class modulo $z$ consisting of those $\sigma $ such that $S(\sigma )$ is not
ruled out as a possible initial segment of $G$. For one $\sigma $ in this
class, $S(\sigma )$ is an initial segment of $G$ and as all the $\sigma $ in
this class are congruent modulo $z$, they all determine the same values of $%
G_{z}^{S}\upharpoonright n$ which must then be the correct value.
\end{proof}

We have now completed the proof that any generic filter $\mathcal{G}$
(deciding all sentences and meeting the dense sets provided by Lemma \ref%
{tot} and Propositions \ref{diag} and \ref{splittree}) provides an embedding
of $\mathcal{K}$ onto an initial segment of $\mathcal{D}_{h}$ that sends $%
x\in \mathcal{K}$ to $\deg _{h}(G_{x}^{P})$ (for any $P\in \mathcal{G}$).

Forcing ranked sentences is a $\Pi _{1}^{1}$ relation (and so recursive in $%
\mathcal{O}$) and the inductive definition thereafter proceeds by the
standard definition quantifying over previous levels of the forcing
relation. (Remember that existential quantification is reduced to
quantification over ordinals and terms below $\omega _{1}^{CK}$ and so to
quantification over $\mathcal{O}$, while negation requires quantification
over forcing conditions and so over $\mathcal{O}\oplus \mathcal{K}$.) Thus
the forcing relation for arbitrary sentences is arithmetic in $\mathcal{O}%
\oplus \mathcal{K}$ (at a level simply related to the complexity of the
sentence). Thus one can, as usual, construct a generic sequence and
corresponding filter that decides all sentences hyperarithmetically in $%
\mathcal{O}\oplus \mathcal{K}$. The steps for meeting the requirements of
totality, diagonalization and splitting are also obviously computable in the
forcing relation once one has proven the existence of the desired extensions
(Lemma \ref{tot}, Proposition \ref{diag} and \ref{splittree}). Thus we can
find our generic $G\equiv _{h}G_{1}^{Id}$ (remember, $1$ is the top element
of our lattice), such that the hyperdegrees below that of $G$ are isomorphic
to our given $\mathcal{K}$, hyperarithmetically in $\mathcal{O}\oplus 
\mathcal{K}$. This establishes Theorem \ref{main} given our lattice
representation theorem to whose proof we now turn.

\section{The lattice representation theorem\label{repsec}}

\begin{theorem}
\label{repthm}If $\mathcal{L}$ is a countable lattice then there is an usl
representation $\Theta $ of $\mathcal{L}$ along with a nested sequence of
finite slsls $\mathcal{L}_{i}$ starting with\emph{\ }$\mathcal{L}%
_{0}=\{0,1\} $ with union $\mathcal{L}$ and a nested sequence of finite
subsets $\Theta _{i}$ with union $\Theta $ with both sequences recursive in $%
\mathcal{L}$ with the following properties:

\begin{enumerate}
\item For each $i,$ $\Theta _{i}\upharpoonright \mathcal{L}_{i}$ is an usl
representation of $\mathcal{L}_{i}$.

\item There are \emph{meet interpolants} for $\Theta _{i}$ in $\Theta _{i+1}$%
, i.e.\ if $\alpha \equiv _{z}\beta $, $x\wedge y=z$ (in $\Theta _{i}$ and $%
\mathcal{L}_{i}$, respectively) then there are $\gamma _{0},\gamma
_{1},\gamma _{2}\in \Theta _{i+1}$ such that $\alpha \equiv _{x}\gamma
_{0}\equiv _{y}\gamma _{1}\equiv _{x}\gamma _{2}\equiv _{y}\beta $.

\item For every $\mathcal{\hat{L}}\subseteq _{lsl}\mathcal{L}_{i}$ there are 
\emph{homogeneity interpolants} for $\Theta _{i}$ with respect to $\mathcal{%
\hat{L}}$ in $\Theta _{i+1}$, i.e.\ for every $\alpha _{0},\alpha _{1},\beta
_{0},\beta _{1}\in \Theta _{i}$ such that $\forall w\in \mathcal{\hat{L}}%
(\alpha _{0}\equiv _{w}\alpha _{1}\rightarrow \beta _{0}\equiv _{w}\beta
_{1})$, there are $\gamma _{0},\gamma _{1}\in \Theta _{i+1}$ and $\mathcal{%
\hat{L}}$-homomorphisms $f,g,h:\Theta _{i}\rightarrow \Theta _{i+1}$ such
that $f:\alpha _{0},\alpha _{1}\mapsto \beta _{0},\gamma _{1}$, $g:\alpha
_{0},\alpha _{1}\mapsto \gamma _{0},\gamma _{1}$ and $h:\alpha _{0},\alpha
_{1}\mapsto \gamma _{0},\beta _{1}$.
\end{enumerate}
\end{theorem}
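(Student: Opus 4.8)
The plan is to build the triple $\langle\mathcal{L}_i,\Theta_i,\Theta\rangle$ by a recursion that is uniformly recursive in $\mathcal{L}$, reducing the whole theorem to a single \emph{extension lemma} applied at each stage. First I fix a recursive-in-$\mathcal{L}$ enumeration of the elements of $\mathcal{L}$, set $\mathcal{L}_0=\{0,1\}$, and let $\mathcal{L}_{i+1}$ be the meet-closure in $\mathcal{L}$ of $\mathcal{L}_i$ together with the next enumerated element; each $\mathcal{L}_i$ is then a finite slsl, hence a finite lattice under its internal join $\vee_{\mathcal{L}_i}$, and $\bigcup_i\mathcal{L}_i=\mathcal{L}$. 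At any fixed stage there are only finitely many meets $x\wedge y=z$ in $\mathcal{L}_i$, finitely many slsls $\mathcal{\hat L}\subseteq_{lsl}\mathcal{L}_i$, and (once $\Theta_i$ is finite) finitely many pairs and quadruples of elements of $\Theta_i$, so all the interpolation demands of properties (2) and (3) for a fixed stage can be discharged by a single finite extension. I would organize the bookkeeping so that exactly the demands generated at stage $i$ are met in passing to $\Theta_{i+1}$, beginning from the evident representation of $\{0,1\}$ by two functions separated at $1$.

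The core is an extension lemma doing three jobs at once, starting from the finite usl representation $\Theta_i\upharpoonright\mathcal{L}_i$ of $\mathcal{L}_i$. (i) \emph{Enlarging the domain}: extend every function of $\Theta_i$ from $\mathcal{L}_i$ to $\mathcal{L}_{i+1}$ and adjoin enough new functions that $\Theta_{i+1}\upharpoonright\mathcal{L}_{i+1}$ is an usl representation of $\mathcal{L}_{i+1}$ (property (1)); here I only verify $\alpha(0)=0$, order, join with respect to $\vee_{\mathcal{L}_{i+1}}$, and differentiation for the newly entering elements. A useful simplification is that the changing internal joins need no reconciliation: at stage $i$ I demand only the join property for $\vee_{\mathcal{L}_i}$, and once the true join $z=x\vee_{\mathcal{L}}y$ of two elements enters some $\mathcal{L}_j$ it coincides with $x\vee_{\mathcal{L}_j}y$, so the join property of $\mathcal{L}$ is recovered in the limit with no extra work. (ii) \emph{Meet interpolants}: for each $\alpha\equiv_z\beta$ in $\Theta_i$ with $x\wedge y=z$ I adjoin three new functions $\gamma_0,\gamma_1,\gamma_2$ whose values on principal filters and ideals are copied from $\alpha$, $\beta$, and one another so as to force the chain $\alpha\equiv_x\gamma_0\equiv_y\gamma_1\equiv_x\gamma_2\equiv_y\beta$; conceptually this realizes $\equiv_z$ as the congruence-join of $\equiv_x$ and $\equiv_y$ in four alternating steps, and the verification is that this assignment is consistent with order and join. (iii) \emph{Homogeneity interpolants}, the subject of Proposition~\ref{hominterp}, treated next.

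For the homogeneity interpolants I am handed a slsl $\mathcal{\hat L}\subseteq_{lsl}\mathcal{L}_i$ and a quadruple $\alpha_0,\alpha_1,\beta_0,\beta_1\in\Theta_i$ satisfying $\forall w\in\mathcal{\hat L}\,(\alpha_0\equiv_w\alpha_1\to\beta_0\equiv_w\beta_1)$, which says that the pair $(\beta_0,\beta_1)$ is $\mathcal{\hat L}$-coarser than $(\alpha_0,\alpha_1)$. I would adjoin two new functions $\gamma_0,\gamma_1\in\Theta_{i+1}$ and define $f,g,h\colon\Theta_i\to\Theta_{i+1}$ by a uniform recipe: on each $\xi\in\Theta_i$ the image is a newly adjoined function whose value at $v\in\mathcal{L}_i$ is read off from $\xi$ and from the data $\alpha_0,\alpha_1,\beta_0,\beta_1$ according to the $\mathcal{\hat L}$-congruence type of $\xi$, arranged so that $f\colon\alpha_0,\alpha_1\mapsto\beta_0,\gamma_1$, $g\colon\alpha_0,\alpha_1\mapsto\gamma_0,\gamma_1$, and $h\colon\alpha_0,\alpha_1\mapsto\gamma_0,\beta_1$, with each of $f,g,h$ preserving every $\equiv_w$ for $w\in\mathcal{\hat L}$ (Definition~\ref{homo}). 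The hypothesis is exactly what makes the clauses pinning $\beta_0$ and $\beta_1$ compatible with the homomorphism requirement. \emph{This step is the main obstacle}: one must choose $\gamma_0,\gamma_1$ and extend $f,g,h$ to all of $\Theta_i$ simultaneously, keep $\Theta_{i+1}$ a genuine usl representation (so the new functions satisfy order and join against every old and new function), and, crucially, avoid collapsing any $\equiv_x$ into $\equiv_y$ for $x\nleq y$, i.e.\ preserve differentiation. The modified convention $g\colon\alpha_0,\alpha_1\mapsto\gamma_0,\gamma_1$ (rather than $\gamma_1,\gamma_0$) is what keeps this bookkeeping manageable and, as noted in the excerpt, simplifies the later splitting-tree construction.

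Finally I would set $\Theta=\bigcup_i\Theta_i$ and check that it is an usl representation of $\mathcal{L}$ in the sense of Definition~\ref{uslrepdef}. The conditions $\alpha(0)=0$, order, and join are each witnessed by finitely many function values and so pass to the union (join as in job (i), since every true join eventually coincides with an internal join); differentiation for $x\nleq y$ is guaranteed at the first stage at which both $x$ and $y$ lie in some $\mathcal{L}_i$, because $\Theta_i\upharpoonright\mathcal{L}_i$ is already a representation of $\mathcal{L}_i$ and the separating values persist in all later $\Theta_j$. Properties (2) and (3) hold by the bookkeeping of the first paragraph, and the uniform recursiveness in $\mathcal{L}$ of $\langle\mathcal{L}_i,\Theta_i\rangle$ is immediate since each application of the extension lemma is a finite computation from $\mathcal{L}$.
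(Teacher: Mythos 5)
Your global architecture coincides exactly with the paper's: the same slsl decomposition $\mathcal{L}_i$ (meet-closure of $\mathcal{L}_i$ plus the next enumerated element), the same stage-by-stage cycle of three extension steps (meet interpolants, homogeneity interpolants, domain enlargement), and the same limit argument, including the correct observation that the join property need only be verified for the internal joins $\vee _{\mathcal{L}_{i}}$ since $x\vee _{\mathcal{L}}y$ coincides with $x\vee _{\mathcal{L}_{j}}y$ once it enters $\mathcal{L}_{j}$ (the paper manages this bookkeeping through the operation $x\mapsto \hat{x}$ and Lemma \ref{basichat}). Your meet-interpolant step is the standard J\'{o}nsson extension and matches Proposition \ref{meetinterp}; your differentiation-in-the-limit argument matches what Proposition \ref{extend} provides.

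The genuine gap is that you never prove the homogeneity-interpolant extension, which is Proposition \ref{hominterp} and is the entire novel content of Theorem \ref{repthm}; you correctly flag it as ``the main obstacle'' and then discharge it with an unspecified ``uniform recipe'' read off from ``the $\mathcal{\hat{L}}$-congruence type of $\xi $,'' which restates the problem rather than solving it. Worse, your framing --- adjoin $\gamma _{0},\gamma _{1}$ in advance and then extend $f,g,h$ to all of $\Theta _{i}$ \emph{simultaneously} --- is harder than what actually works and obscures the mechanism. The paper constructs the maps \emph{sequentially}: first $f$ on $\Theta $, with $f(\alpha )(x)=\beta _{0}(x)$ if $\alpha \equiv _{\hat{x}}\alpha _{0}$ and otherwise a \emph{fresh} value depending only on $\alpha (\hat{x})$; then $\gamma _{1}:=f(\alpha _{1})$ is an \emph{output} of $f$, not a preassigned target; then $h$ is defined on $\Theta _{1}=\Theta \cup f[\Theta ]$ the same way using $\alpha _{1},\beta _{1}$, yielding $\gamma _{0}:=h(\alpha _{0})$; finally $g$ is defined on $\Theta _{2}=\Theta _{1}\cup h[\Theta _{1}]$ by a three-case scheme ($\gamma _{0}(x)$ if $\alpha \equiv _{\hat{x}}\alpha _{0}$, else $\gamma _{1}(x)$ if $\alpha \equiv _{\hat{x}}\alpha _{1}$, else fresh). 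Two features you omit are essential. First, keying the fresh values to $\alpha (\hat{x})$ is what makes each map an $\mathcal{\hat{L}}$-homomorphism and makes your worry about ``avoiding collapsing $\equiv _{x}$ into $\equiv _{y}$'' evaporate --- new values are new, so differentiation is automatic and is not where the difficulty lies. Second, the real work is the order and join verification for the mixed cases, above all the join property for $x\vee y=z$ when $g(\alpha )\equiv _{x,y}\beta $ with $\alpha \equiv _{\hat{x}}\alpha _{0}$ but $\alpha \equiv _{\hat{y}}\alpha _{1}$, $\alpha \not\equiv _{\hat{y}}\alpha _{0}$: there one must split on whether $\beta \in \Theta _{1}$ or $\beta =h(\delta )$ and chain through the recorded implications relating $f$, $h$, $g$ (the paper's displayed facts (1)--(4)) to reach a contradiction with the case hypothesis. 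Your proposal contains no substitute for this case analysis, and ``the hypothesis is exactly what makes the clauses compatible'' is an assertion, not an argument; as written, the proof of the theorem's clause (3) is missing.
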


\begin{proof}
We first define the sequence $\mathcal{L}_{i}$ of slsls of $\mathcal{L}$
beginning with $\mathcal{L}_{0}$ which consists of the $0$ and $1$ of $%
\mathcal{L}$. We let the other elements of $\mathcal{L}$ be $x_{n}$ for $%
n\geq 1$ and $\mathcal{L}_{n}$ be the (necessarily finite) slsl of $\mathcal{%
L}$ generated by $\{0,1,x_{1},\ldots ,x_{n}\}$. As for $\Theta $, we choose
a countable set $\alpha _{i}$ and stipulate that $\Theta =\{\alpha _{i}|i\in
\omega \}$. We begin defining the (values of) the $\alpha _{i}$ by setting $%
\alpha _{0}(x)=0$ for all $x\in \mathcal{L}$ and $\alpha (0)=0$ for all $%
\alpha \in \Theta $. We will now define $\Theta _{n}$ and the values of $%
\alpha \in \Theta _{n}$ (other than $\alpha _{0}$) on the elements of $%
\mathcal{L}_{n}$ (other than $0$) by recursion. For $\Theta _{0}$ we choose
a new element $\beta $ of $\Theta $ and let $\Theta _{0}=\{\alpha _{0},\beta
\}$ and set $\beta (1)=1$. Given $\Theta _{n}$ and the values for its
elements on $\mathcal{L}_{n}$ we wish to enlarge $\Theta _{n}$ to $\Theta
_{n+1}$ and define the values of $\alpha (x)$ for $\alpha \in \Theta _{n+1}$
and $x\in \mathcal{L}_{n+1}$ so that the requirements of the Theorem are
satisfied. To do this we prove a number of general extension theorems for
usl representations in the Propositions below that show that we can make
simple extensions to satisfy any particular meet or homogeneity requirement
and also extend usl representations from smaller to larger slsls of $%
\mathcal{L}$. To be more specific, we first apply Proposition \ref%
{meetinterp} successively for each choice of $x\wedge y=z$ in $\mathcal{L}%
_{n}$ and $\alpha ,\beta \in \Theta _{n}$ with $\alpha \equiv _{z}\beta $
choosing new elements of $\Theta $ to form $\Theta _{n}^{\prime }$ extending 
$\Theta _{n}$ and defining them on $\mathcal{L}_{n}$ so that $\Theta
_{n}^{\prime }\upharpoonright \mathcal{L}_{n}$ is an usl representation for $%
\mathcal{L}_{n}$ containing $\Theta _{n}$ and the required meet interpolants
for every such $x,y,z,\alpha $ and $\beta $. We then apply Proposition \ref%
{hominterp} successively for each $\mathcal{\hat{L}}\subseteq _{lsl}\mathcal{%
L}_{n}$ and each $\alpha _{0},\alpha _{1},\beta _{0},\beta _{1}\in \Theta
_{n}$ such that $\forall w\in \mathcal{\hat{L}}(\alpha _{0}\equiv _{w}\alpha
_{1}\rightarrow \beta _{0}\equiv _{w}\beta _{1})$ to get larger subset $%
\Theta _{n}^{\prime \prime }$ of $\Theta $ which we also define on $\mathcal{%
L}_{n}$ so as to have an usl representation $\Theta _{n}^{\prime \prime
}\upharpoonright \mathcal{L}_{n}$ for $\mathcal{L}_{n}$ that has the
required homogeneity interpolants and $\mathcal{\hat{L}}$-homomorphisms from 
$\Theta _{n}$ into $\Theta _{n}^{\prime \prime }$ for every such $\alpha
_{0},\alpha _{1},\beta _{0},\beta _{1}\in \Theta _{n}$. Finally, we apply
Proposition \ref{extend} to define the elements of $\Theta _{n}^{\prime
\prime }$ on $\mathcal{L}_{n+1}$ and further enlarge it to our desired
finite $\Theta _{n+1}\subseteq \Theta $ with all its new elements also
defined on $\mathcal{L}_{n+1}$ so as to have an usl representation of $%
\mathcal{L}_{n+1}$ with all the properties required by the Theorem. It is
now easy immediate from the definitions that the union $\Theta $ of the $%
\Theta _{n}$ is an usl representation of $\mathcal{L}$.
\end{proof}

\begin{notation}
If a finite $\mathcal{\hat{L}}$ is a slsl of $\mathcal{L}$, $\mathcal{\hat{L}%
}\subseteq _{lsl}\mathcal{L}$, and $x\in \mathcal{L}$ then we let $\hat{x}$
denote the least element of $\mathcal{\hat{L}}$ above $x$. The desired
element of $\mathcal{\hat{L}}$ exists because $\mathcal{\hat{L}}$ is a slsl
of $\mathcal{L}$ and so the infimum (in $\mathcal{\hat{L}}$ or,
equivalently, in $\mathcal{L}$) of $\{u\in \mathcal{\hat{L}}|x\leq u\}$ is
in $\mathcal{\hat{L}}$ and is the desired $\hat{x}$. As $\mathcal{\hat{L}}$
is finite it is also a lattice but join in $\mathcal{\hat{L}}$ may not agree
with that in $\mathcal{L}$. We denote them by $\vee _{\mathcal{\hat{L}}}$
and $\vee _{\mathcal{L}}$ respectively when it is necessary to make this
distinction.
\end{notation}

\begin{lemma}
\label{basichat}With the notation as above, $\hat{x}=x$ for $x\in \mathcal{%
\hat{L}}$ and so it is an idempotent operation. If $x\leq y$ are in $%
\mathcal{L}$ then $\hat{x}\leq \hat{y}$. If $x\vee _{\mathcal{L}}y=z$ are in 
$\mathcal{L}$ then $\hat{z}=\hat{x}\vee _{\mathcal{\hat{L}}}\hat{y}$.
\end{lemma}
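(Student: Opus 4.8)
The plan is to derive all three claims directly from the explicit description of the operation given in the preceding Notation, namely that $\hat{x}$ is the meet (taken in $\mathcal{\hat{L}}$, equivalently in $\mathcal{L}$) of the set $U_x=\{u\in\mathcal{\hat{L}}\mid x\leq u\}$, which is nonempty (it contains $1$) and has its meet in $\mathcal{\hat{L}}$, as noted there. The only fact I will use repeatedly is the characterization of $\hat{x}$ by two properties: $\hat{x}\in\mathcal{\hat{L}}$ with $x\leq\hat{x}$, and $\hat{x}\leq u$ for every $u\in U_x$. Each claim then reduces to exhibiting a suitable witness in $\mathcal{\hat{L}}$ and invoking this minimality.

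For the first claim, if $x\in\mathcal{\hat{L}}$ then $x\in U_x$, so minimality gives $\hat{x}\leq x$, while $x\leq\hat{x}$ holds by definition; hence $\hat{x}=x$. Idempotence is then the instance $\hat{\hat{x}}=\hat{x}$, valid because $\hat{x}\in\mathcal{\hat{L}}$. For monotonicity, if $x\leq y$ then every element of $\mathcal{\hat{L}}$ above $y$ is also above $x$, so $U_y\subseteq U_x$; in particular $\hat{y}\in U_x$, and minimality of $\hat{x}$ over $U_x$ yields $\hat{x}\leq\hat{y}$.

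For the join claim I would set $w=\hat{x}\vee_{\mathcal{\hat{L}}}\hat{y}$ (the join computed in the finite lattice $\mathcal{\hat{L}}$) and prove $\hat{z}=w$ by two inequalities. For $\hat{z}\leq w$: since $x\leq\hat{x}\leq w$ and $y\leq\hat{y}\leq w$, the element $w$ is an upper bound of $x$ and $y$ in $\mathcal{L}$, so $z=x\vee_{\mathcal{L}}y\leq w$; as $w\in\mathcal{\hat{L}}$ this places $w\in U_z$ and minimality gives $\hat{z}\leq w$. For $w\leq\hat{z}$: from $x\leq z$ and $y\leq z$ the monotonicity just proved yields $\hat{x}\leq\hat{z}$ and $\hat{y}\leq\hat{z}$, so $\hat{z}$ is an upper bound of $\hat{x}$ and $\hat{y}$ lying in $\mathcal{\hat{L}}$; since $w$ is their least upper bound within $\mathcal{\hat{L}}$, we get $w\leq\hat{z}$. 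Combining the two inequalities gives $\hat{z}=\hat{x}\vee_{\mathcal{\hat{L}}}\hat{y}$.

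There is no genuine obstacle here; the lemma is a routine order-theoretic verification, and the first two parts feed directly into the third. The single point that demands care is the distinction between $\vee_{\mathcal{\hat{L}}}$ and $\vee_{\mathcal{L}}$: one must use $\vee_{\mathcal{L}}$ precisely when invoking the defining least-upper-bound property of $z$ in $\mathcal{L}$, and $\vee_{\mathcal{\hat{L}}}$ (the possibly larger join inside $\mathcal{\hat{L}}$) when appealing to the least-upper-bound property of $w$ among elements of $\mathcal{\hat{L}}$. Keeping these two joins cleanly separated is exactly what makes the argument for the third claim close, and it is the reason the notation distinguishes them in the first place.
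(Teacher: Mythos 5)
Your proposal is correct and follows essentially the same route as the paper: the paper likewise dismisses the first two claims as immediate from the definition of $\hat{x}$ as the infimum in $\mathcal{\hat{L}}$ of the elements above $x$, and proves the join claim by the same two inequalities ($\hat{x}\vee_{\mathcal{\hat{L}}}\hat{y}\leq\hat{z}$ via monotonicity, and $\hat{z}\leq\hat{x}\vee_{\mathcal{\hat{L}}}\hat{y}$ by observing that $z\leq\hat{x}\vee_{\mathcal{\hat{L}}}\hat{y}\in\mathcal{\hat{L}}$ and invoking minimality). Your careful separation of $\vee_{\mathcal{L}}$ from $\vee_{\mathcal{\hat{L}}}$ is exactly the point the paper's argument turns on as well.
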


\begin{proof}
The first two assertions follow immediately from the definition of $\hat{x}$%
. The third is only slightly less immediate: $x,y\leq x\vee _{\mathcal{L}%
}y=z $ and so by the second assertion, $\hat{x},\hat{y}\leq \hat{z}$ and so $%
\hat{x}\vee _{\mathcal{\hat{L}}}\hat{y}\leq \hat{z}$. For the other
direction, note that as $x\leq \hat{x}$, $y\leq \hat{y}$, we have that $%
z=x\vee _{\mathcal{L}}y\leq \hat{x}\vee _{\mathcal{L}}\hat{y}\leq \hat{x}%
\vee _{\mathcal{\hat{L}}}\hat{y}\in \mathcal{\hat{L}}$ and so $\hat{z}\leq 
\hat{x}\vee _{\mathcal{\hat{L}}}\hat{y}$.
\end{proof}

\begin{proposition}
\label{extend}If $\Theta $ is a finite usl representation for $\mathcal{\hat{%
L}}\subseteq _{lsl}\mathcal{L}$ (finite) then there are extensions for each $%
\alpha \in \Theta $ to maps with domain $\mathcal{L}$ and finitely many
further functions $\beta $ with domain $\mathcal{L}$ such that adding them
on to our extensions of the $\alpha \in \Theta $ provides an usl
representation $\Theta ^{\prime }$ of $\mathcal{L}$ with $\Theta \subseteq
\Theta ^{\prime }\upharpoonright \mathcal{\hat{L}}$. Moreover, these
extensions can be found uniformly recursively in the given data ($\Theta $, $%
\mathcal{\hat{L}}$ and $\mathcal{L}$).
\end{proposition}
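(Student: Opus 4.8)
The plan is to organize everything around a single reformulation: for a pair of maps $\alpha,\beta\colon\mathcal{L}\to\omega$, the Order and Join clauses of Definition~\ref{uslrepdef} hold for $\alpha,\beta$ if and only if their \emph{agreement set} $A(\alpha,\beta)=\{v\in\mathcal{L}\mid\alpha(v)=\beta(v)\}$ is an ideal of $\mathcal{L}$, i.e.\ is downward closed (Order) and closed under $\vee_{\mathcal{L}}$ (Join). Thus to build $\Theta'$ it suffices to define maps on $\mathcal{L}$ so that every agreement set between two of them is an ideal, so that each sends $0$ to $0$, and so that Differentiation holds, while arranging $\Theta\subseteq\Theta'\upharpoonright\mathcal{\hat{L}}$.

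First I would extend each $\alpha\in\Theta$ to $\alpha'\colon\mathcal{L}\to\omega$ by $\alpha'(v)=\alpha(\hat{v})$, where $\hat{v}$ is the least element of $\mathcal{\hat{L}}$ above $v$. For $v\in\mathcal{\hat{L}}$ we have $\hat{v}=v$, so $\alpha'\upharpoonright\mathcal{\hat{L}}=\alpha$ and hence $\Theta\subseteq\Theta'\upharpoonright\mathcal{\hat{L}}$; also $\alpha'(0)=\alpha(0)=0$. Order and Join among the $\alpha'$ follow from those of $\Theta$ on $\mathcal{\hat{L}}$ together with Lemma~\ref{basichat}: $x\leq y$ gives $\hat{x}\leq\hat{y}$, and $x\vee_{\mathcal{L}}y=z$ gives $\hat{z}=\hat{x}\vee_{\mathcal{\hat{L}}}\hat{y}$. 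Concretely, $A(\alpha',\beta')=\{v\mid\hat{v}\in A(\alpha,\beta)\}$ is the preimage of an ideal of $\mathcal{\hat{L}}$ under the monotone, join-respecting map $v\mapsto\hat{v}$, hence an ideal of $\mathcal{L}$; this is exactly where the distinction between $\vee_{\mathcal{L}}$ and $\vee_{\mathcal{\hat{L}}}$ is absorbed.

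The only clause that can fail for these extensions is Differentiation, and the obstruction is precisely that $x\not\leq y$ in $\mathcal{L}$ may hold while $\hat{x}\leq\hat{y}$. I would repair this by adding, for each of the finitely many pairs $x\not\leq y$ in $\mathcal{L}$, one new map $\delta_{x,y}$ that splits along the principal ideal $I_y=\{v\mid v\leq y\}$: set $\delta_{x,y}(v)=0$ for $v\in I_y$, and for $v\notin I_y$ let $\delta_{x,y}(v)$ be a value chosen \emph{fresh} at that point $v$, namely distinct from $0$, from every $\alpha'(v)$, and from every other new map's value at $v$. Since $0\leq y$ always, $\delta_{x,y}(0)=0$. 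Pairing $\delta_{x,y}$ with the extended zero map $\alpha_0$ then witnesses Differentiation for $(x,y)$: they agree on $I_y\ni y$, while $\delta_{x,y}(x)\neq 0=\alpha_0(x)$ because $x\notin I_y$. Taking $\Theta'$ to consist of all the $\alpha'$ together with all the $\delta_{x,y}$ (so, in particular, $\alpha_0$) gives Differentiation for every $x\not\leq y$.

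It remains to verify that adding the $\delta_{x,y}$ does not spoil Order or Join, and this bookkeeping is the step I expect to be the real obstacle. By the freshness convention, off its ideal a new map never matches any other map, so $A(\delta_{x,y},\alpha')=I_y\cap A(\alpha',\alpha_0)$ (an intersection of ideals, hence an ideal) and $A(\delta_{x,y},\delta_{x',y'})=I_y\cap I_{y'}=I_{y\wedge y'}$ (again an ideal). Thus every agreement set in $\Theta'$ is an ideal, so Order and Join hold throughout, each map sends $0$ to $0$, and all maps are total on $\mathcal{L}$. Finally, $\hat{\cdot}$, the ideals $I_y$, and the fresh-value assignment are all computable from the finite data $\mathcal{L},\mathcal{\hat{L}},\Theta$, which yields the asserted uniform recursiveness. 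The crux is exactly the twin requirement on the new maps: that they be identically $0$ on $I_y$ (so no agreement set shrinks below an ideal and breaks Order) and that they avoid all other values off $I_y$ (so no agreement set grows beyond an ideal and breaks Join).
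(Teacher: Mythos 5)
Your proposal is correct and takes essentially the same approach as the paper's proof: you extend each $\alpha\in\Theta$ by $\alpha'(v)=\alpha(\hat{v})$ and verify Order and Join via Lemma~\ref{basichat}, then restore Differentiation by adjoining new functions that are trivial on a principal ideal and take fresh values elsewhere. The only cosmetic differences are your tidy reformulation of the pairwise Order and Join clauses as ``agreement sets are ideals'' and your use of a single new map per pair $x\nleq y$ (played off against the zero function) where the paper adds a pair $\alpha_{x,y},\beta_{x,y}$ agreeing exactly on a principal ideal.
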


\begin{proof}
For $\alpha \in \Theta $ and $x\in \mathcal{L}$ set $\alpha (x)=\alpha (\hat{%
x})$. We first check that we have maintained the order and join properties
required of an usl representation. If $x\leq y$ are in $\mathcal{L}$, $%
\alpha ,\beta \in \Theta $ and $\alpha \equiv _{y}\beta $ then by definition 
$\alpha \equiv _{\hat{y}}\beta $ and so $\alpha \equiv _{\hat{x}}\beta $ as $%
\hat{x}\leq \hat{y}$ by Lemma \ref{basichat} and $\Theta $'s being an usl
representation of $\mathcal{\hat{L}}$. Thus, by definition, $\alpha \equiv
_{x}\beta $ as required.

Next, if $x\vee _{\mathcal{L}}y=z$ are in $\mathcal{L}$ and $\alpha \equiv
_{x,y}\beta $ we wish to show that $\alpha \equiv _{z}\beta $. Again by
definition $\alpha \equiv _{\hat{x},\hat{y}}\beta $. By Lemma \ref{basichat}%
, $\hat{x}\vee _{\mathcal{L}}\hat{y}=z$, so by $\Theta $ being an usl
representation for $\mathcal{\hat{L}}$, $\alpha \equiv _{\hat{z}}\beta $ and
so by definition, $\alpha \equiv _{z}\beta $.

All that remains is to show that we can add on new maps with domain $%
\mathcal{L}$ that provide witnesses for the differentiation property for
elements of $\mathcal{L}-\mathcal{\hat{L}}$ while preserving the order and
join properties. This is a standard construction. For each pair $x\nleq y$
(in $\mathcal{L}$ but not both in $\mathcal{\hat{L}}$) in turn we add on new
elements $\alpha _{x,y}$ and $\beta _{x,y}$ with all new and distinct values
at each $z\in \mathcal{L}$ except that they agree on all $z\leq x$ (and at $%
0 $, of course, have value $0$). These new elements obviously provide the
witnesses required for the differentiation property for an usl
representation. It is easy to see that they also cause no damage to the
order or join properties. There are no new nontrivial instances of
congruences between them and the old ones in $\Theta $ (extended to $%
\mathcal{L}$). Among the new elements the only instances to consider are
ones between $\alpha _{x,y}$ and $\beta _{x,y}$ for the same pair $x,y$ and
for lattice elements $z$ less than or equal to $x$. As $\alpha _{x,y}\equiv
_{z}\beta _{x,y}$ for all $z\leq x$, the order and join properties are
immediate.
\end{proof}

\begin{proposition}
\label{meetinterp}If $\alpha ,\beta \in \Theta $, an usl representation for
a finite lattice $\mathcal{L}$, $\alpha \equiv _{z}\beta $ and $x\wedge y=z$
in $\mathcal{L}$ then there are $\gamma _{0},\gamma _{1},\gamma _{2}$ such
that $\alpha \equiv _{x}\gamma _{0}\equiv _{y}\gamma _{1}\equiv _{x}\gamma
_{2}\equiv _{y}\beta $ and $\Theta \cup \{\gamma _{0},\gamma _{1},\gamma
_{2}\}$ is still an usl representation for $\mathcal{L}$. Moreover, these
extensions can be found uniformly recursively in the given data.
\end{proposition}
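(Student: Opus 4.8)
The plan is to realize the required length-four chain $\alpha\equiv_x\gamma_0\equiv_y\gamma_1\equiv_x\gamma_2\equiv_y\beta$ by defining the three new functions $\gamma_0,\gamma_1,\gamma_2$ directly, value by value on each $w\in\mathcal{L}$, copying $\alpha$ or $\beta$ exactly where the chain forces an equality and using fresh values everywhere else. First I would record what the four required congruences force. By the order property, $\gamma_0\equiv_x\alpha$ pins down $\gamma_0(w)=\alpha(w)$ for every $w\le x$; $\gamma_0\equiv_y\gamma_1$ forces $\gamma_1(w)=\gamma_0(w)$ for $w\le y$; $\gamma_1\equiv_x\gamma_2$ forces $\gamma_2(w)=\gamma_1(w)$ for $w\le x$; and $\gamma_2\equiv_y\beta$ forces $\gamma_2(w)=\beta(w)$ for $w\le y$. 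Chasing these down to $w\le z=x\wedge y$ forces $\alpha(w)=\gamma_0(w)=\gamma_1(w)=\gamma_2(w)=\beta(w)$, and this is consistent precisely because the hypothesis $\alpha\equiv_z\beta$ gives $\alpha(w)=\beta(w)$ there (using order). This is the only place the meet hypothesis enters.

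Concretely I would split $\mathcal{L}$ into the four regions cut out by the principal ideals of $x$ and $y$ and set: on $\{w\mid w\le z\}$, all three $\gamma_i(w)=\alpha(w)$; on $\{w\mid w\le x,\ w\not\le y\}$, $\gamma_0(w)=\alpha(w)$ and $\gamma_1(w)=\gamma_2(w)=c_w$; on $\{w\mid w\le y,\ w\not\le x\}$, $\gamma_2(w)=\beta(w)$ and $\gamma_0(w)=\gamma_1(w)=d_w$; and on $\{w\mid w\not\le x,\ w\not\le y\}$, three distinct values $\gamma_0(w)=e_w^0$, $\gamma_1(w)=e_w^1$, $\gamma_2(w)=e_w^2$. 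Here every $c_w,d_w,e_w^i$ is a \emph{fresh} number, chosen distinct from one another and from all values occurring in $\Theta$ and in $\alpha,\beta$. Matching fresh values are reused exactly where an $\equiv_x$ or $\equiv_y$ link forces two of the new functions to agree (e.g.\ the single value $c_w$ is assigned to both $\gamma_1(w)$ and $\gamma_2(w)$), and nowhere else.

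It remains to check that $\Theta'=\Theta\cup\{\gamma_0,\gamma_1,\gamma_2\}$ is again an usl representation of $\mathcal{L}$. The differentiation property and the requirement that the zero function lie in $\Theta'$ are inherited from $\Theta$, and $\gamma_i(0)=\alpha(0)=0$ since $0\le z$; so the content is the order and join clauses, which are each statements about the agreement set $\{w\mid f\equiv_w g\}$ of a pair of functions $(f,g)$. For pairs already in $\Theta$ there is nothing to do. For the three new--new pairs the freshness choices make the agreement sets come out to be exactly principal ideals: $\gamma_0,\gamma_1$ agree exactly on $\{w\mid w\le y\}$, $\gamma_1,\gamma_2$ exactly on $\{w\mid w\le x\}$, and $\gamma_0,\gamma_2$ exactly on $\{w\mid w\le z\}$; each such ideal is downward closed and closed under join, so order and join hold. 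For a new--old pair $(\gamma_i,\alpha')$ the fresh values can never match an old value, so agreement can happen only on the region where $\gamma_i$ copies $\alpha$ or $\beta$; there the agreement set is the intersection of a principal ideal with the old agreement set $\{w\mid \alpha\equiv_w\alpha'\}$ (or $\{w\mid\beta\equiv_w\alpha'\}$), and since both factors are downward closed and join closed (the latter because $\Theta$ is a representation) so is the intersection.

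The one step needing care --- and the main obstacle --- is exactly this verification that no \emph{spurious} congruence is introduced: the whole argument rests on the fresh values guaranteeing that each pairwise agreement set is precisely the ideal dictated by the chain, so that order and join reduce to the trivial closure properties of principal ideals and of their intersections with existing agreement sets. Since the interpolants $\gamma_0,\gamma_1,\gamma_2$ are produced by a fixed rule from $\alpha,\beta,x,y,z$ and $\Theta$, the construction is uniformly recursive in the given data, as claimed.
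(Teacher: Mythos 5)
Your proposal is correct and is essentially the paper's own proof: the paper (citing J\'onsson [1953] and Lerman [1983, Appendix B.2.5]) defines $\gamma_0,\gamma_1,\gamma_2$ by exactly the same rule---copy $\alpha$ on $\{w\mid w\leq x\}$, copy $\beta$ on $\{w\mid w\leq y\}$, share fresh values where the chain forces the new functions to agree, and use distinct fresh values elsewhere. Your explicit verification via pairwise agreement sets (principal ideals for new--new pairs, intersections of a principal ideal with an old agreement set for new--old pairs) is precisely the routine check the paper delegates to the references, so the two arguments coincide.
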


\begin{proof}
This is a standard fact going back to Jonsson [1953] and can be found in
Lerman [1983, Appendix B.2.5]. If $x\leq y$, there is nothing to be proved.
Otherwise, the interpolants can be defined by letting $\gamma _{0}(w)$ be $%
\alpha (w)$ for $w\leq x$ and new values for $w\nleq x$; $\gamma
_{1}(w)=\gamma _{0}(w)$ for $w\leq y$ and new values otherwise; and $\gamma
_{2}(w)=\beta (w)$ for $w\leq y$, $\gamma _{2}(w)=\gamma _{1}(w)$ if $w\leq
x $ but $w\nleq y$ and new otherwise.
\end{proof}

\begin{proposition}
\label{hominterp}If $\mathcal{\hat{L}}\subseteq _{lsl}\mathcal{L}$, a finite
lattice, and $\Theta $ is an usl representation for $\mathcal{L}$ with $%
\alpha _{0},\alpha _{1},\beta _{0},\beta _{1}\in \Theta $ such that $\forall
w\in \mathcal{\hat{L}}(\alpha _{0}\equiv _{w}\alpha _{1}\rightarrow \beta
_{0}\equiv _{w}\beta _{1})$, then there is an usl representation $\tilde{%
\Theta}\supseteq \Theta $ for $\mathcal{L}$ with $\gamma _{0},\gamma _{1}\in 
\tilde{\Theta}$ and $\mathcal{\hat{L}}$ homomorphisms $f,g,h:\Theta
\rightarrow \tilde{\Theta}$ such that $f:\alpha _{0},\alpha _{1}\mapsto
\beta _{0},\gamma _{1}$, $g:\alpha _{0},\alpha _{1}\mapsto \gamma
_{0},\gamma _{1}$ and $h:\alpha _{0},\alpha _{1}\mapsto \gamma _{0},\beta
_{1}$. Moreover, these extensions can be found uniformly recursively in the
given data.
\end{proposition}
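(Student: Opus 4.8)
The plan is to construct the homogeneity interpolants by combining two simpler extension operations: taking disjoint products (fibered sums) of copies of the representation $\Theta$ to create the auxiliary functions $\gamma_0,\gamma_1$, and then verifying directly that the three maps $f,g,h$ preserve $\mathcal{\hat{L}}$-congruences. First I would handle the degenerate cases. If $\alpha_0 \equiv_w \alpha_1$ for all $w \in \mathcal{\hat{L}}$ (i.e.\ $\alpha_0$ and $\alpha_1$ are $\mathcal{\hat{L}}$-indistinguishable), then by hypothesis $\beta_0 \equiv_w \beta_1$ as well, and one can take $\gamma_0 = \beta_0$, $\gamma_1 = \beta_1$ with $f = h = \mathrm{id}$ on the relevant values; the content is in the case where $\alpha_0$ and $\alpha_1$ differ on some element of $\mathcal{\hat{L}}$.

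The core construction I would carry out is to form a new representation $\tilde{\Theta}$ consisting of pairs (or more precisely, functions built from two ``copies'' of the data) and define $\gamma_0,\gamma_1$ so that $\gamma_0$ agrees with $\beta_0$ in one coordinate and with a fresh copy elsewhere, while $\gamma_1$ agrees with $\beta_1$ appropriately. Concretely, I expect to define values $\gamma_i(w)$ by cases according to whether $w$ lies below the relevant meet/join structure determined by $\mathcal{\hat{L}}$ and the congruence pattern of $\alpha_0,\alpha_1$, using brand-new natural numbers for values that must be ``generic.'' The maps $f,g,h$ are then specified on the generating pairs $\alpha_0 \mapsto \beta_0$ or $\gamma_0$, $\alpha_1 \mapsto \gamma_1$ or $\beta_1$, and extended to all of $\Theta$ by the requirement of being $\mathcal{\hat{L}}$-homomorphisms; one checks this extension is well defined precisely because the hypothesis $\forall w \in \mathcal{\hat{L}}(\alpha_0 \equiv_w \alpha_1 \to \beta_0 \equiv_w \beta_1)$ guarantees that whenever two elements of $\Theta$ are forced to the same image along $\mathcal{\hat{L}}$-congruences, their targets agree.

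The verification splits into two parts. First, that $\tilde{\Theta}$ is genuinely an usl representation of $\mathcal{L}$: the order, join, and differentiation properties must be re-checked for the enlarged set, which should follow from the same disjointness-of-new-values bookkeeping used in Proposition \ref{extend} and Proposition \ref{meetinterp}, since new functions are either copies of old ones or take fresh values outside any congruence that could break order or join. Second, that $f,g,h$ really are $\mathcal{\hat{L}}$-homomorphisms and land $\alpha_0,\alpha_1$ on the stipulated targets. I would verify the homomorphism property by showing that for $w \in \mathcal{\hat{L}}$, the value $f(\delta)(w)$ depends only on $\delta(w)$ (and similarly for $g,h$), so that $\delta \equiv_w \delta'$ forces $f(\delta) \equiv_w f(\delta')$.

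The main obstacle I anticipate is the simultaneous consistency of the three maps with the single choice of interpolants $\gamma_0,\gamma_1$: the definitions of $f,g,h$ overlap on $\alpha_0$ and $\alpha_1$ (e.g.\ both $g$ and $h$ send $\alpha_0 \mapsto \gamma_0$, while $f$ and $g$ agree on $\alpha_1 \mapsto \gamma_1$), and each map must independently respect every $\mathcal{\hat{L}}$-congruence holding in $\Theta$ while pinning down the prescribed images. Getting the case-split in the definition of $\gamma_0,\gamma_1$ fine enough that all three homomorphism conditions hold at once — without accidentally forcing an unwanted congruence between $\gamma_0$ and $\gamma_1$ or between these and the $\beta_i$ that would violate differentiation — is the delicate part. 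This is exactly where the author's modified definition (requiring $g : \alpha_0,\alpha_1 \mapsto \gamma_0,\gamma_1$ rather than $\gamma_1,\gamma_0$) should pay off, by making the congruence demands on the three maps compatible in a uniform way; I would lean on that ordering convention to keep the construction of $\gamma_0,\gamma_1$ as a single coherent fibered extension rather than patching separately for each map.
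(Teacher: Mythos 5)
Your outline has the right overall shape --- one-step extensions with fresh values, maps whose congruence behavior is controlled pointwise, and the recognition that the simultaneous consistency of $f,g,h$ with a single pair $\gamma_0,\gamma_1$ is the crux --- but it stops precisely where the work happens, and two of its stated mechanisms do not function as claimed. First, ``extend to all of $\Theta$ by the requirement of being an $\mathcal{\hat{L}}$-homomorphism'' is not a construction: that requirement only constrains $f(\delta)$ modulo those $w\in \mathcal{\hat{L}}$ at which $\delta$ happens to be congruent to $\alpha _{0}$ (resp.\ $\alpha _{1}$); at all other $w$, and at every $x\in \mathcal{L}\setminus \mathcal{\hat{L}}$, the value is undetermined, and these free values must be chosen so that the \emph{enlarged} set still satisfies order and join for all of $\mathcal{L}$, not just $\mathcal{\hat{L}}$. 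The device the paper uses for this is the operator $x\mapsto \hat{x}$ (the least element of $\mathcal{\hat{L}}$ above $x$): one sets $f(\alpha )(x)=\beta _{0}(x)$ iff $\alpha \equiv _{\hat{x}}\alpha _{0}$ and otherwise a fresh value depending only on $\alpha (\hat{x})$, so that by Lemma \ref{basichat} ($x\leq y\Rightarrow \hat{x}\leq \hat{y}$ and $\widehat{x\vee y}=\hat{x}\vee _{\mathcal{\hat{L}}}\hat{y}$) every order or join instance in $\mathcal{L}$ involving new functions reduces to an instance in $\mathcal{\hat{L}}$ among old ones. Your weaker invariant --- $f(\delta )(w)$ depends only on $\delta (w)$ for $w\in \mathcal{\hat{L}}$ --- is the right instinct but does not reach the $x\notin \mathcal{\hat{L}}$ cases at all.

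Second, the consistency problem you explicitly defer (``the delicate part'') is not resolved by the ordering convention alone; the paper resolves it by making the interpolants \emph{images} rather than independently stipulated objects, built in three sequential stages: define $f$ on $\Theta $ and set $\gamma _{1}=f(\alpha _{1})$; then define $h$ on the already-enlarged $\Theta _{1}=\Theta \cup f[\Theta ]$ (so that $h$ is defined on $\gamma _{1}$, which is needed in the final verification) and set $\gamma _{0}=h(\alpha _{0})$; only then define $g$ on $\Theta _{2}$ by the two-case clause ($\gamma _{0}(x)$ if $\alpha \equiv _{\hat{x}}\alpha _{0}$, else $\gamma _{1}(x)$ if $\alpha \equiv _{\hat{x}}\alpha _{1}$, else fresh). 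Notably, $f$ and $h$ require no hypothesis at all; the assumption $\forall w\in \mathcal{\hat{L}}(\alpha _{0}\equiv _{w}\alpha _{1}\rightarrow \beta _{0}\equiv _{w}\beta _{1})$ is used in exactly one place, to prove $\alpha _{0}\equiv _{\hat{x}}\alpha _{1}\Rightarrow \gamma _{0}\equiv _{x}\gamma _{1}$, which is what makes $g$'s two clauses cohere and what carries the hardest case of the join verification for $\Theta _{3}$ (namely $g(\alpha )\equiv _{x,y}\beta $ with $\beta =h(\delta )$ for $\delta \in \Theta _{1}$). So your claim that the hypothesis is what makes the homomorphic extension ``well defined'' misplaces its role. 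Finally, your degenerate-case shortcut fails as stated: even when $\alpha _{0}$ and $\alpha _{1}$ are $\mathcal{\hat{L}}$-indistinguishable, $f$ must send $\alpha _{0}$ to $\beta _{0}$, so $f=\mathrm{id}$ is not an option; the uniform construction needs no case split, and omitting it would have avoided the error.
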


\begin{proof}
For each $\alpha \in \Theta $ and $x\in \mathcal{L}$ we set $f(\alpha
)(x)=\beta _{0}(x)$ if $\alpha \equiv _{\hat{x}}\alpha _{0}$ and otherwise
we let it be a new number that depends only on $\alpha (\hat{x})$, e.g. $%
\alpha (\hat{x})^{\ast }$. Note that which case of the definition applies
for $f(\alpha )(x)$ depends only on $\alpha (\hat{x})$ and it can be an
\textquotedblleft old\textquotedblright\ value (i.e.\ one of some $\beta \in
\Theta $) only in the first case. Thus, for $\alpha ,\beta \in \Theta $,%
\begin{equation}
\text{(a) }\alpha \equiv _{\hat{x}}\beta \Leftrightarrow f(\alpha )\equiv
_{x}f(\beta )\text{ and (b) }f(\alpha )\equiv _{x}\beta \Rightarrow \alpha
\equiv _{\hat{x}}\alpha _{0}\Rightarrow f(\alpha )\equiv _{x}\beta _{0}\text{%
.}  \label{1}
\end{equation}%
Let $\Theta _{1}=\Theta \cup f[\Theta ]$. We claim that $\Theta _{1}$ is an
usl representation for $\mathcal{L}$ and $f$ is an $\mathcal{\hat{L}}$%
-homomorphism from $\Theta $ into $\Theta _{1}$. That $f$ is an $\mathcal{%
\hat{L}}$-homomorphism is immediate from the first clause in (\ref{1}) and
the fact (Lemma \ref{basichat}) that $\hat{x}=x$ for $x\in \mathcal{\hat{L}}$%
. We next check that $\Theta _{1}$ satisfies the properties required of an
usl representation. Of course, $f(\alpha )(0)=0$ by definition for every $%
\alpha $ and differentiation is automatic as it extends $\Theta $.

First, to check the order property for $\Theta _{1}$ we consider any $x\leq
y $ in $\mathcal{L}$. As $\Theta $ is already an usl representation for $%
\mathcal{L}$, it suffices to consider two cases for the pair of elements of $%
\Theta _{1}$ which are given as congruent modulo $y$ and show that in these
two cases they are also congruent modulo $x$. The two cases are that (a)
both are in $f[\Theta ]$ and that (b) one is in $f[\Theta ]$ and the other
in $\Theta $. Thus it suffices to consider any $\alpha ,\beta \in \Theta $,
assume that (a) $f(\alpha )\equiv _{y}f(\beta )$ or (b) $f(\alpha )\equiv
_{y}\beta $ and prove that (a) $f(\alpha )\equiv _{x}f(\beta )$ and (b) $%
f(\alpha )\equiv _{x}\beta $, respectively. For (a), we have by (\ref{1})
that $\alpha \equiv _{\hat{y}}\beta $ and so by the order property for $%
\Theta $, $\alpha \equiv _{\hat{x}}\beta $. Thus $f(\alpha )\equiv
_{x}f(\beta )$ by definition as required. As for (b), (\ref{1}) tells us
here that $\alpha \equiv _{\hat{y}}\alpha _{0}$ and $\beta \equiv
_{y}f(\alpha )\equiv _{y}\beta _{0}$ (and therefore $\beta \equiv _{x}\beta
_{0}$). Now by Lemma \ref{basichat} $\alpha \equiv _{\hat{x}}\alpha _{0}$ so 
$f(\alpha )\equiv _{x}\beta _{0}$ and so $f(\alpha )\equiv _{x}\beta $ as
required.

Next we verify the join property for $x\vee y=z$ in $\mathcal{L}$ and two
elements of $\Theta _{1}$ (not both in $\Theta $) in the same two cases. For
(a) we have that $f(\alpha )\equiv _{x,y}f(\beta )$ and so as above $\alpha
\equiv _{\hat{x},\hat{y}}\beta $. Now by the join property in $\Theta $ and
Lemma \ref{basichat}, $\alpha \equiv _{\hat{z}}\beta $ and so $f(\alpha
)\equiv _{z}f(\beta )$ as required. For (b) using (\ref{1}b) and Lemma \ref%
{basichat} again we have that $f(\alpha )\equiv _{x,y}\beta \Rightarrow $ $%
\alpha \equiv _{\hat{x},\hat{y}}\alpha _{0}\Rightarrow \alpha \equiv _{\hat{z%
}}\alpha _{0}\Rightarrow f(\alpha )\equiv _{z}\beta _{0}$ while it also
tells us that $\beta \equiv _{x,y}f(\alpha )\equiv _{x,y}\beta _{0}$ as
required. Note that clearly $f(\alpha _{0})=\beta _{0}$. We let $\gamma
_{1}=f(\alpha _{1})$ and so have the first function and (partial) extension
of $\Theta $ required in the Proposition.

We now define $h$ on $\Theta _{1}$ as we did $f$ on $\Theta $ using $\alpha
_{1}$ and $\beta _{1}$ in place of $\alpha _{0}$ and $\beta _{0}$,
respectively: $h(\alpha )(x)=\beta _{1}(x)$ if $\alpha \equiv _{\hat{x}%
}\alpha _{1}$ and otherwise we let it be a new number that depends only on $%
\alpha (\hat{x})$, e.g. $\alpha (\hat{x})^{\ast \ast }$. Let $\Theta
_{2}=\Theta _{1}\cup h[\Theta _{1}]$. As above, $\Theta _{2}$ is an usl
representation for $\mathcal{L}$ and $h$ is an $\mathcal{\hat{L}}$%
-homomorphism from $\Theta _{1}$ (and so $\Theta $) into $\Theta _{2}$
taking $\alpha _{1}$ to $\beta _{1}$. We let $\gamma _{0}=h(\alpha _{0})$
and so have the third function and (partial) extension of $\Theta $ required
in the Proposition. As above in (\ref{1}), we have for any $\alpha ,\beta
\in \Theta _{1}$ and $x\in \mathcal{L}$,%
\begin{equation}
\text{(a) }\alpha \equiv _{\hat{x}}\beta \Leftrightarrow h(\alpha )\equiv
_{x}h(\beta )\text{ and (b) }h(\alpha )\equiv _{x}\beta \Rightarrow \alpha
\equiv _{\hat{x}}\alpha _{1}\Rightarrow h(\alpha )\equiv _{x}\beta _{1}\text{%
.}  \label{2}
\end{equation}%
Applying the second clause to $\gamma _{0}=h(\alpha _{0})$ and first to any $%
\beta \in \Theta _{1}$ and then, in particular to $\gamma _{1}$ we have%
\begin{equation}
\text{(a) }\gamma _{0}\equiv _{x}\beta \Rightarrow \alpha _{0}\equiv _{\hat{x%
}}\alpha _{1}\Rightarrow f(\alpha _{1})=\gamma _{1}\equiv _{x}\beta _{0}%
\text{ and (b) }\gamma _{0}\equiv _{x}\gamma _{1}\Leftrightarrow \alpha
_{0}\equiv _{\hat{x}}\alpha _{1}\text{.}  \label{3}
\end{equation}%
\noindent To see the right to left direction of the second clause, note that 
$\alpha _{0}\equiv _{\hat{x}}\alpha _{1}$ implies that $\gamma _{0}\equiv
_{x}\beta _{1}$ and $\gamma _{1}\equiv _{x}\beta _{0}$ by the definitions of 
$h$ and $f$, respectively, while it also implies that $\beta _{0}\equiv _{%
\hat{x}}\beta _{1}$ by the basic assumption of the Proposition. Thus, as $%
\Theta $ is an usl representation of $\mathcal{L}$ and $x\leq \hat{x}$, $%
\beta _{0}\equiv _{x}\beta _{1}$ and $\gamma _{0}\equiv _{x}\gamma _{1}$.

Finally, we define $g$ on $\alpha \in \Theta _{2}$ by setting $g(\alpha
)(x)=\gamma _{0}(x)$ if $\alpha \equiv _{\hat{x}}\alpha _{0}$. If $\alpha
\not\equiv _{\hat{x}}\alpha _{0}$ but $\alpha \equiv _{\hat{x}}\alpha _{1}$
then $g(\alpha )(x)=\gamma _{1}(x)$. Otherwise, we let $g(\alpha )(x)$ be a
new number that depends only on $\alpha (\hat{x})$, e.g. $\alpha (\hat{x}%
)^{\ast \ast \ast }$. Note that if $\alpha \equiv _{\hat{x}}\alpha _{1}$
then we always have $g(\alpha )\equiv _{x}\gamma _{1}$ as if $\alpha \equiv
_{\hat{x}}\alpha _{0}$ as well then, by (\ref{3}b), $\gamma _{0}\equiv
_{x}\gamma _{1}$. Thus $g(\alpha _{0})=\gamma _{0}$ and $g(\alpha
_{1})=\gamma _{1}$ as required. It is also obvious that $g$ is an $\mathcal{%
\hat{L}}$-homomorphism of $\Theta _{2}$ (and so $\Theta $) into $\Theta
_{3}=\Theta _{2}\cup g[\Theta _{2}]$ as by definition and Lemma \ref%
{basichat}, $\alpha \equiv _{\hat{x}}\beta \Rightarrow g(\alpha )\equiv _{%
\hat{x}}g(\beta )$ for any $x\in \mathcal{L}$. Indeed, for any $\alpha
,\beta \in \Theta _{2}$ and $x\in \mathcal{L}$%
\begin{equation}
\alpha \equiv _{\hat{x}}\beta \Leftrightarrow g(\alpha )\equiv _{x}g(\beta )%
\text{.}  \label{4}
\end{equation}%
\noindent To see the right to left direction here, note that if either of $%
g(\alpha )$ or $g(\beta )$ is new for $g$ at $x$ (i.e.\ of the form $\delta (%
\hat{y})^{\ast \ast \ast }$) then clearly both are. In this case, $\alpha
\equiv _{\hat{x}}\beta $ by definition. Otherwise, either they are both
congruent to $\alpha _{0}$ or both to $\alpha _{1}$ and so congruent to each
other mod $\hat{x}$. The point here is that if one is congruent to $\alpha
_{0}$ and the other to $\alpha _{1}$ but not $\alpha _{0}$ at $\hat{x}$ then
by definition $\gamma _{0}\equiv _{x}\gamma _{1}$ and so by (\ref{3}b), $%
\alpha _{0}\equiv _{\hat{x}}\alpha _{1}$ for a contradiction.

Thus we only need to verify that $\Theta _{3}$ is an usl representation of $%
\mathcal{L}$. We consider any $\alpha ,\beta \in \Theta _{2}$ and divide the
verifications into cases (a) and (b) as before with the former considering $%
g(\alpha )$ and $g(\beta )$ and the latter $g(\alpha )$ and $\beta $. These
cases may then be further subdivided.

We begin with the order property and so $x\leq y$ in $\mathcal{L}$.

(a) If $g(\alpha )\equiv _{y}g(\beta )$ then, by (\ref{4}), $\alpha \equiv _{%
\hat{y}}\beta $ and so $\alpha \equiv _{\hat{x}}\beta $ as $\hat{x}\leq \hat{%
y}$ (Lemma \ref{basichat}) and $\Theta _{2}$ is an usl representation of $%
\mathcal{L}$. Thus, again by (\ref{4}) $g(\alpha )\equiv _{x}g(\beta )$ as
required.

(b) If $g(\alpha )\equiv _{y}\beta $ then by definition they are congruent
modulo $y$ to $\gamma _{i}$ (for some $i\in \{0,1\}$) and $\alpha $ is
congruent to $\alpha _{i}$ at $\hat{y}$. Thus $\alpha \equiv _{\hat{x}%
}\alpha _{i}$ as $\hat{x}\leq \hat{y}$ and $\Theta _{2}$ is an usl
representation so $g(\alpha )\equiv _{x}\gamma _{i}$ by definition.
Similarly, as $x\leq y$, $\beta \equiv _{x}\gamma _{i}$ as well.

Now for the join property for $x\vee y=z$ in $\mathcal{L}$.

(a) If $g(\alpha )\equiv _{x,y}g(\beta )$ then, as above, $\alpha \equiv _{%
\hat{x},\hat{y}}\beta $. As $\hat{x}\vee \hat{y}=\hat{z}$ by Lemma \ref%
{basichat} and $\Theta _{2}$ is an usl representation, $\alpha \equiv _{\hat{%
z}}\beta $ and so by (\ref{4}) $g(\alpha )\equiv _{z}g(\beta )$ as required.

(b) If $g(\alpha )\equiv _{x,y}\beta $ then again $\alpha \equiv _{\hat{x}%
}\alpha _{i}$ and $\alpha \equiv _{\hat{y}}\alpha _{j}$ for some $i,j\in
\{0,1\}$ and $g(\alpha )\equiv _{x}\beta \equiv _{x}\gamma _{i}$ while $%
g(\alpha )\equiv _{y}\beta \equiv _{y}\gamma _{j}$. If $i=j$ then $\alpha
\equiv _{\hat{x},\hat{y}}\alpha _{i}$ and so $\alpha \equiv _{\hat{z}}\alpha
_{i}$ and $g(\alpha )\equiv _{z}\gamma _{i}\equiv _{z}\beta $ as required.

On the other hand, suppose (without loss of generality) that $\alpha \equiv
_{\hat{x}}\alpha _{0}$ and so $\beta \equiv _{x}g(\alpha )\equiv _{\hat{x}%
,x}\gamma _{0}=h(\alpha _{0})$ while $\alpha _{0}\not\equiv _{\hat{y}}\alpha
\equiv _{\hat{y}}\alpha _{1}$ and so $\beta \equiv _{y}g(\alpha )\equiv _{%
\hat{y},y}\gamma _{1}=f(\alpha _{1})$. If $\beta \in \Theta _{1}$ then by (%
\ref{4}a) $\alpha _{0}\equiv _{\hat{x}}\alpha _{1}$ and so $\alpha \equiv _{%
\hat{x}}\alpha _{1}$. As our assumption is that $\alpha \equiv _{\hat{y}%
}\alpha _{1}$ we have (by the join property in $\Theta _{2}$) that $\alpha
\equiv _{\hat{z}}\alpha _{1}$ and so $g(\alpha )\equiv _{z}\gamma _{1}$. As $%
\alpha _{0}\equiv _{\hat{x}}\alpha _{1}$ (\ref{3}b) tells us that $\gamma
_{0}\equiv _{x}\gamma _{1}$. Our assumptions then say that $\beta \equiv
_{x,y}\gamma _{1}$ and so $\beta \equiv _{z}\gamma _{1}$ as required. Thus
we may assume that $\beta =h(\delta )$ for some $\delta \in \Theta _{1}$.

We now have $h(\delta )=\beta \equiv _{x}g(\alpha )\equiv _{x}\gamma
_{0}=h(\alpha _{0})\in \Theta _{1}$ and so by (\ref{2}a) applied to $%
h(\delta )\equiv _{x}h(\alpha _{0})$ with $\delta $ for $\alpha $ and $%
\alpha _{0}$ for $\beta $ we see that $\delta \equiv _{\hat{x}}\alpha _{0}$.
We also have $h(\delta )=\beta \equiv _{y}g(\alpha )\equiv _{\hat{y}%
,y}\gamma _{1}=f(\alpha _{1})$. Applying (\ref{2}b) to $h(\delta )\equiv
_{y}\gamma _{1}$ with $\delta $ for $\alpha $ and $\gamma _{1}\in \Theta
_{1} $ for $\beta $, we see that $\delta \equiv _{\hat{y}}\alpha _{1}$and $%
h(\delta )\equiv _{y}\beta _{1}$ and so $\beta _{1}\equiv _{y}\gamma
_{1}=f(\alpha _{1})$. Now applying (\ref{1}b) with $\alpha _{1}$ for $\alpha 
$ and $\beta _{1}\in \Theta $ for $\beta $, we have that $\alpha _{1}\equiv
_{\hat{y}}\alpha _{0}$. As this contradicts our (without loss of generality)
assumption, we are done.
\end{proof}

We complete our lattice theoretic material by supplying a proof of Lemma \ref%
{amal}.

\noindent \textbf{Lemma \ref{amal}: }\emph{The class of finite lattices
(with }$0$ \emph{and} $1$\emph{) has the amalgamation property, i.e.\ if }$%
\mathcal{A}$, $\mathcal{B}_{0}$ \emph{and} $\mathcal{B}_{1}$ \emph{are
finite lattices and} $f_{0},f_{1}$ \emph{are embeddings of} $\mathcal{A}$ 
\emph{into} $\mathcal{B}_{0}$ \emph{and} $\mathcal{B}_{1}$\emph{,
respectively, then there is a finite lattice }$\mathcal{C}$ \emph{and
embeddings} $g_{0}$ \emph{and} $g_{1}$ \emph{of} $\mathcal{B}_{0}$ \emph{and}
$\mathcal{B}_{1}$\emph{, respectively, into} $\mathcal{C}$ \emph{such that} $%
g_{0}f_{0}\upharpoonright \mathcal{A}=g_{1}f_{1}\upharpoonright \mathcal{A}$.

\begin{proof}
To simplify the notation we assume without loss of generality that the
embeddings $f_{0}$ and $f_{1}$ are the inclusion maps and that the elements
of $\mathcal{A}$ are the only ones that $\mathcal{B}_{0}$ and $\mathcal{B}%
_{1}$ have in common. We begin with a partial lattice $\mathcal{P}$ whose
universe is the union of those of $\mathcal{B}_{0}$ and $\mathcal{B}_{1}$.
We define an order on $\mathcal{P}$ that coincides with the one on $\mathcal{%
B}_{i}$ for $x,y$ both in one $\mathcal{B}_{i}$ and otherwise (say $x\in 
\mathcal{B}_{i}-\mathcal{A}$ and $y\in \mathcal{B}_{1-i}-\mathcal{A}$) we
set $x<y\Leftrightarrow \exists a\in \mathcal{A}(x<_{\mathcal{B}%
_{i}}a~\&~a<_{\mathcal{B}_{1-i}}y)$. This relation is clearly transitive and
we claim it preserves both join and meet from each $\mathcal{B}_{i}$. First,
if $x,y\in \mathcal{B}_{i}$ and $x\wedge y=z$ in $\mathcal{B}_{i}$ then $z$
is also the greatest lower bound of $x$ and $y$ in $\mathcal{P}$. Clearly $%
z\leq x,y$. So suppose $w\leq x,y$ is in $\mathcal{P}$. If $w\in \mathcal{B}%
_{i}$ then, of course, $w\leq z$. If $w\in \mathcal{P}-\mathcal{B}_{i}$ then
there are $a_{x},a_{y}\in \mathcal{A}$ such that $w<a_{x}\leq x$ and $%
w<a_{y}\leq y$ so $w\leq a_{x}\wedge _{\mathcal{A}}a_{y}\leq a_{x}\wedge _{%
\mathcal{B}_{i}}a_{y}\leq x\wedge _{\mathcal{B}_{i}}y=z$. The argument for
preserving join is similar.

Now let $\mathcal{C}$ be the set of ideals of $\mathcal{P}$, i.e.\ the
subsets of $\mathcal{P}$ closed downward and under join when defined in $%
\mathcal{P}$. We first note that $\mathcal{C}$ is clearly a lattice with
order given by containment and the operations on $X,Y\in \mathcal{C}$ given
by $X\wedge Y=X\cap Y$ and $X\vee Y$ equals the ideal in $\mathcal{P}$
generated by $X\cup Y$ (i.e.\ we close downward and under join when defined).

Finally, we define the required maps $g_{i}:\mathcal{B}_{i}\rightarrow 
\mathcal{C}$ as the restrictions (to $\mathcal{B}_{i}$) of a single one-one $%
g:\mathcal{P}\rightarrow \mathcal{C}$ defined by sending $p\in \mathcal{P}$
to $\{q\in \mathcal{P}|q\leq _{\mathcal{P}}p\}$ (the ideal generated by $p$%
). We show that $g$ preserves join and meet in $\mathcal{P}$ when they exist
and so its restrictions to $\mathcal{B}_{i}$ are lattice embeddings. If $%
p\wedge q=r$ in $\mathcal{P}$ then it is clear that $g(p)\cap g(q)=g(r)$ by
the definition of meet in $\mathcal{P}$ as required. As for join, if $p\vee
q=r$ in $\mathcal{P}$ then $g(r)$ is an ideal of $\mathcal{P}$ that contains
both $g(p)$ and $g(q)$. On the other hand, any ideal of $\mathcal{P}$
containing both $p$ and $q$ must contain $r$ by the definition of ideals in $%
\mathcal{P}$. Thus $g(r)=g(p)\vee _{\mathcal{C}}g(q)$ as required.
\end{proof}

\section{Usls and other questions\label{quest}}

There are now two obvious questions about the possible countable initial
segments of the hyperdegrees. The first asks about lattice initial segments.

\begin{question}
What are the lattice initial segments of $\mathcal{D}_{h}$? In particular,
are there any which are not sublattices of some hyperarithmetic lattice?
\end{question}

We do not even have any candidates for additional lattices isomorphic to
initial segments of $\mathcal{D}_{h}$.

The second natural line of inquiry asks about usl initial segments. In $%
\mathcal{D}_{T}$, there is no difference in the results: Every countable usl
is isomorphic to an initial segment of $\mathcal{D}_{T}$. Of course, we have
seen (Theorem \ref{ctrex}) that not every countable lattice is isomorphic to
an initial segment of $\mathcal{D}_{h}$. Given our Theorem \ref{main},
however, the conjecture might be that every subuppersemilattice (\emph{susl}%
) of a hyperarithmetic lattice is isomorphic to an initial segment of $%
\mathcal{D}_{h}$. Now our proof actually makes significant use of the
existence of infima in $\mathcal{L}$ at various points. As it turns out, the
assumption is essential, at least at this level of generality even if we
require $\mathcal{L}$ to be locally finite as well.

\begin{proposition}
\label{uslcode}There is a susl $\mathcal{L}^{\prime }$ of a locally finite
recursive lattice $\mathcal{L}$ which is not isomorphic to any initial
segment of $\mathcal{D}_{h}$.
\end{proposition}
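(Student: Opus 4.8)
The plan is to push the coding behind Theorem~\ref{ctrex} down into the join-semilattice part of a locally finite lattice. Fix a nonhyperarithmetic set $X$ (for concreteness $X=\overline{\mathcal{O}}$, the complement of Kleene's $\mathcal{O}$). Using the effective successor-model coding of Shore [1981], [2007], [2008], I would build a countable usl $\mathcal{K}_X$, recursive in $X$, carrying an $\omega$-sequence $a_0,a_1,\dots$ together with finitely many parameters, for which there is a single recursive sequence of positive $\Sigma_1$ sentences $\phi_n$ (and a dual sequence $\psi_n$) in the language $\{\leq,\vee\}$ (with constants $0,1$) such that $n\in X\Leftrightarrow \mathcal{K}_X\models\phi_n$ and $n\notin X\Leftrightarrow \mathcal{K}_X\models\psi_n$. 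The point is that the successor relation on the $a_n$ and the membership condition ``$n\in X$'' are expressed purely positively in $\leq$ and $\vee$, so they never mention meet. I would then realize $\mathcal{K}_X$ as a susl of a locally finite lattice $\mathcal{L}_X$ (recursive in $X$) and, by embedding $\mathcal{L}_X$ into the recursive universal locally finite lattice $\mathcal{U}$ constructed above (possible by its universality, with $0,1$ preserved), take $\mathcal{L}=\mathcal{U}$ and $\mathcal{L}'=g[\mathcal{K}_X]$ for the embedding $g$. Since a lattice embedding preserves $\leq$ and $\vee$ and is injective, it restricts to a usl isomorphism of $\mathcal{K}_X$ onto $\mathcal{L}'$, so the positive $\Sigma_1$ coding transports verbatim: $n\in X\Leftrightarrow \mathcal{L}'\models\phi_n$, and $\mathcal{L}$ is a recursive locally finite lattice with $\mathcal{L}'\subseteq_{usl}\mathcal{L}$.

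For the impossibility, suppose toward a contradiction that $\mathcal{L}'$ is isomorphic to an initial segment of $\mathcal{D}_h$, necessarily with bottom $0$ and some top $\deg_h(G)$. As the isomorphism preserves $\leq$ and $\vee$, evaluating each $\phi_n$ in $\mathcal{L}'$ agrees with evaluating it in the initial segment of hyperdegrees below $\deg_h(G)$. On the standard number-indexing of that initial segment, $\leq_h$ is $\Pi_1^1(G)$ and $\vee$ is effective; since $\Pi_1^1$ is closed under number quantification, each positive $\Sigma_1$ sentence $\phi_n$ (an existential over lattice elements of a positive combination of $\leq$- and $\vee$-atoms) defines a $\Pi_1^1(G)$ condition on $n$, and likewise for $\psi_n$. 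Thus both $X$ and $\overline{X}$ are $\Pi_1^1(G)$, i.e. $X\in\Delta_1^1(G)$. Invoking the same additional trick as in Shore [2007], [2008] --- which uses only the positive $\leq,\vee$ form of the decoding and hence applies here unchanged --- the decoding can be carried out $\Delta_1^1$ in the bottom rather than the top of any realization, so $X\leq_h f(0)$. For an initial segment $f(0)=0$, whence $X$ would be hyperarithmetic, contradicting the choice of $X$. Therefore $\mathcal{L}'$ is not isomorphic to any initial segment of $\mathcal{D}_h$.

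The hard part will be the middle step: arranging the coding so that the ambient lattice is genuinely \emph{locally finite}. In the lattice coding of Theorem~\ref{ctrex} the successor operation is built from $\vee$ \emph{and} $\wedge$, so finitely many generators produce the whole $\omega$-sequence and the lattice is far from locally finite; that is precisely why that construction cannot live in a distributive (hence locally finite) lattice. The whole point of passing to a susl is that I may place the $a_n$ and the parameters into $\mathcal{K}_X$ as honest elements and define successor and membership positively in $\vee$ alone, and only \emph{afterward} adjoin the meets needed to obtain the lattice $\mathcal{L}_X$. The delicate task is to choose those meets --- for example by taking the $a_n$ to form a chain (or a sufficiently controlled antichain) whose finite meets with the parameters take only finitely many values --- so that every finite subset of $\mathcal{L}_X$ generates a finite sublattice. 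A secondary point to verify is that $\phi_n$ is always evaluated in the susl $\mathcal{L}'$ and never in the larger lattice; since we only ever assert truth in $\mathcal{L}'$ and Shore's coding is already correct as a statement about the usl, any spurious existential witnesses introduced among the adjoined meets are harmless to the decoding.
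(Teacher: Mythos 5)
Your overall architecture is the same as the paper's: fix a nonhyperarithmetic $X$, code it by positive $\Sigma _{1}$ sentences in $\leq $ and $\vee $ into a susl of a recursive locally finite lattice, and observe that in any initial segment realization the existential quantifiers range over the sets hyperarithmetic in the top $G$, so the decoding is $\Pi _{1}^{1}(G)$. But the step you explicitly defer --- making the successor coding live inside a genuinely locally finite ambient lattice --- is the actual mathematical content of the proposition, and your sketched remedy (taking the $a_{n}$ to be a chain, or an antichain whose meets with the parameters take finitely many values) is not shown to work and comes with no soundness verification, i.e.\ no proof that witnesses for $\phi _{n}$ exist in the susl \emph{only} when $n\in X$. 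The paper's device is quite specific: replace the single $\omega $-sequence by a doubly indexed family $d_{i,0},\ldots ,d_{i,i}$ of truncated successor sequences of every finite length (so no finite set generates infinitely much), add endpoint detectors $\hat{p},\hat{q}$ with $\hat{p}\vee d_{i,j}\geq \hat{q}$ iff $j=i$, make $c,\bar{c}$ an exact pair over the $d_{i,0}$, and then prove soundness of the decoding chain $x_{0}\leq c,\bar{c}$, $x_{2m+1}\leq x_{2m}\vee e_{0},f_{1}$, \ldots , $x_{n}\vee \hat{p}\geq \hat{q}$ using minimality of the $d_{i,j}$, the $p,q$ nontriviality conditions, and a careful hand-built arrangement of joins and meets in $\mathcal{L}$. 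Nothing playing this role appears in your proposal, so the middle of your argument is a genuine gap, not a routine verification.

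Your endgame is also flawed. First, the coding the paper obtains (and any coding of this shape, where $n\in X$ is reflected by \emph{which} elements $d_{i,j}$ were placed into the susl) is one-directional: you get $\phi _{n}$ but there is no evident dual sequence $\psi _{n}$ positively witnessing $n\notin X$ inside the same susl, since absence of elements is not positively certified; so $X\in \Delta _{1}^{1}(G)$ is not established. Second, even granting both directions, $X\leq _{h}G$ is no contradiction when $G$ is the nonzero top of the initial segment, so the entire weight of your proof falls on the claim that the ``bottom trick'' of Shore [2007], [2008] applies ``unchanged.'' That trick is proved there for the finitely generated successor-model lattices, where the whole configuration is generated from the parameters (using meets) and correctness of a candidate parameter tuple is a finitary condition; the present paper explicitly notes that this coding ``seems to rely on the fact that it is finitely generated as it uses the generators as parameters in the decoding,'' and in a locally finite susl the infinite family cannot be recovered from finitely many parameters in that way. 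Tellingly, the paper avoids the trick altogether: it takes $X=\overline{\mathcal{O}}$, so the one-directional $\Pi _{1}^{1}(G)$ decoding together with $\mathcal{O}$ being $\Pi _{1}^{1}$ outright yields $\mathcal{O}\leq _{h}G$, and the contradiction is then structural --- by Feferman every countable partial order embeds below $\deg _{h}(\mathcal{O})\leq \deg _{h}(G)$, while the susl $\mathcal{K}$ (for instance, being well founded) cannot be isomorphic to the degrees below $G$. You should replace your black-box appeal with an endgame of this kind.
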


\begin{proof}
The construction is an elaboration of that referred to in Theorem \ref{ctrex}
that exploits the possibility of exact pairs for ideals in usls to make the
initial lattice locally finite. The basic construction of the finitely
generated successor model of Shore [1981] as modified in Shore [2007],
[2008] begins with special elements designated by $%
d_{0},e_{0},e_{1,}f_{0},f_{1},p$ and $q$. They contain a sequence $d_{n}$ of
elements of order type $\omega $ generated by the special elements as
follows:

\begin{enumerate}
\item[$(\ast )$] $(d_{2n}\vee e_{0})\wedge f_{1}=d_{2n+1}$ and

\item[$(\ast \ast )$] $(d_{2n+1}\vee e_{1})\wedge f_{0}=d_{2n+2}$.
\end{enumerate}

\noindent In addition we require that $p\ngeq q$ and $p\vee d_{n}\geq q$ for
each $n$. We then code a set $X$ by adding two additional special elements $%
c_{X}$ and $\bar{c}_{X}$ such that $d_{n}\leq c_{X}$ for $n\in X$, $%
d_{n}\wedge c_{X}=0$ for $n\notin X$, $d_{n}\leq \bar{c}_{X}$ for $n\notin X$
and $d_{n}\wedge \bar{c}_{X}=0$ for $n\in X$. (So, in particular, $\exists
x(0<x\leq d_{n},c_{X})\rightarrow d_{n}\leq c_{X}$ and $\exists x(0<x\leq
d_{n},\bar{c}_{X})\rightarrow d_{n}\leq \bar{c}_{X}$.)

We adjust this procedure to make the lattice locally finite. In place of $%
d_{0}$ we have a set of elements $d_{i,0}$ for $i\in \mathbb{\omega }$. For
each $i$ the sequence generated by the schemes $(\ast )$ and $(\ast \ast )$
now terminates after $i$ steps producing sequences $d_{i,0},\ldots ,d_{i,i}$
of length $i+1$ by having $(d_{i,i}\vee e_{0})\wedge f_{1}=0$ if $i$ is even
and $(d_{i,i}\vee e_{1})\wedge f_{0}=0$ if $i$ is odd. We now require that $%
p\vee d_{i,j}\geq q$ for every $j\leq i\in \mathbb{\omega }$ and add on new
elements $\hat{p}\ngeq \hat{q}$ such that $\hat{p}\vee d_{i,j}\geq \hat{q}$
if and only if $j=i$. In place of $c_{X}$ and $\bar{c}_{X}$ we have one
fixed pair $c$ and $\bar{c}$ that are above all the $d_{i,0}$ and no other
of the previously mentioned elements. We complete this description to a
lattice $\mathcal{L}$ in a way that respects the given ordering and
specified join and meet relations and makes $c\wedge \bar{c}$ a minimal
upper bound of the ideal generated by the $d_{i,0}$ with no other nonzero
elements below it. We give more details after we see what properties are
needed to make our coding of sets in a susl of $\mathcal{L}$ be sufficiently
flexible to show that some such are not isomorphic to initial segments of $%
\mathcal{D}_{h}$.

Given a set $X$ we want to code $X$ into a susl $\mathcal{K}$ of $\mathcal{L}
$ by taking the susl of $\mathcal{L}$ generated by the special elements $c,%
\bar{c},e_{0},e_{1,}f_{0},f_{1},p,q,\hat{p}$ and $\hat{q}$ and the $d_{i,j}$
for $i\in X$. In particular, $c$ and $\bar{c}$ will now be an exact pair for
the ideal generated by the $d_{i,0}$ for $i\in X$. We want to guarantee that 
$n\in X\Leftrightarrow $ there is a sequence $x_{0},\ldots ,x_{n}$ with $%
x_{0}\leq c,\bar{c}$; $x_{2m+1}\leq x_{2m}\vee e_{0},f_{1}$ and $%
x_{2m+2}\leq x_{2m+1}\vee e_{1},f_{0}$ for $2m+1,2m+2\leq n$; $x_{m}\vee
p\geq q$ for $m\leq n$ and $x_{n}\vee \hat{p}\geq \hat{q}$.

We claim that in this case $X$ is $\Pi _{1}^{1}$ in the top $G$ of any
embedding of $\mathcal{K}$ as an initial segment. As existential
quantification over sets hyperarithmetic in $G$ and the relation $A\leq
_{h}B $ for sets given as hyperarithmetic in $G$ are both $\Pi _{1}^{1}$ in $%
G$ and the join operator is recursive (on indices), it is clear that the
specified relation on $n$ is $\Pi _{1}^{1}(G)$ and that it holds of every $%
n\in X$. What remains to verify is that it holds only of $n\in X$. So
suppose there are $x_{0},\ldots ,x_{n}$ as described. For $x_{0}\leq c,\bar{c%
}$, we want $x_{0}$ to be the join of finitely many $d_{i,0}$ by making
these the only elements of $\mathcal{K}$ below both $c$ and $\bar{c}$. Say
for definiteness that $x_{0}=d_{i_{1},0}\vee \cdots \vee d_{i_{k},0}$. We
also arrange our lattice so that $(x_{0}\vee e_{0})\wedge
f_{1}=d_{i_{1},1}\vee \cdots \vee d_{i_{k},1}$ and so $x_{1}\leq
d_{i_{1},1}\vee \cdots \vee d_{i_{k},1}$. In general, we arrange our lattice
so that $(x_{2n}\vee e_{0})\wedge f_{1}=d_{i_{1},2n+1}\vee \cdots \vee
d_{i_{k},2n+1}$ and $(x_{2n+1}\vee e_{1})\wedge f_{0}=d_{i_{1},2n+2}\vee
\cdots \vee d_{i_{k},2n+2}$ where we understand that for $m>i$, $d_{i,m}=0$.
Thus $x_{m}\leq d_{i_{1},m}\vee \cdots \vee d_{i_{k},m}$. The requirements
that $x_{m}\vee p\geq q$ guarantee that $x_{m}>0$ and, by making the $%
d_{i,j} $ minimal elements of the lattice, they must be above some nonzero $%
d_{i_{k},m}$. Finally, we guarantee that the only way such an $x_{m}$ can
join $\hat{p}$ above $\hat{q}$ is for it to be above some $d_{i_{k},i_{k}}$
but these elements are in $\mathcal{K}$ if and only if $i_{k}\in X$ as
required.

Now to be more specific about the structure of $\mathcal{L}$ we specify its
elements and the order on them that will give a lattice with all the desired
properties. We begin, of course, with $0$ and $1$. The elements $%
e_{0},e_{1},p,q,\hat{p},\hat{q}$ and $d_{i,j}$ (for $j\leq i\in \mathbb{%
\omega }$) are minimal nonzero elements of $\mathcal{L}$. We extend the $%
d_{i,j}$ freely to an usl $\mathcal{L}^{\prime }$ by taking all formal
finite joins. This imposes a lattice structure on this set as well since
each of the new elements has only finitely many elements below it. We next
let $\hat{p}$ act on the usl $\mathcal{L}^{\prime }$ as an order isomorphism
under join (so for $x,y\in \mathcal{L}^{\prime }$, $x\vee \hat{p}\leq y\vee 
\hat{p}\Leftrightarrow x\leq y$). Let $\mathcal{L}^{\prime \prime }$ be the
susl of $\mathcal{L}^{\prime }$ generated by the $d_{i,j}$ with $j<i$. For $%
x\in \mathcal{L}^{\prime \prime }$, no elements other than $\hat{p}$ and
those $y\leq x$ are below $x\vee \hat{p}$. For $x\in \mathcal{L}^{\prime }-%
\mathcal{L}^{\prime \prime }$ we also put $x\vee \hat{p}\geq \hat{q}$.
Joining $e_{0}$ with members of $\mathcal{L}^{\prime }$ also acts as an
order isomorphism except that, for any $m\in \mathbb{\omega }$ and $\langle
i_{k}\rangle ,\langle j_{k}\rangle \in \mathbb{\omega }^{m}$ such that $%
~\forall k<m(j_{k}\leq i_{k})$, we make the following identification: 
\begin{equation*}
e_{0}\vee \bigvee \{d_{i_{k},j_{k}}|k<m\}=e_{0}\vee \bigvee
\{d_{i_{k},j_{k}}|k<m\}\vee \bigvee \{d_{i_{k},j_{k}+1}|k\mathbb{<}%
m,j_{k}<i_{k}~\text{and }j_{k}~\text{is even}\}.
\end{equation*}

We do the same for joining with $e_{1}$ except that we change
\textquotedblleft even\textquotedblright\ to \textquotedblleft
odd\textquotedblright . We let $f_{0}$ ($f_{1}$) be above the usl generated
by the $d_{i,j}$ for odd (even) $j\leq i$ and put in a new element $c\wedge 
\bar{c}$ (below both $c$ and $\bar{c}$) which is above the usl generated by
the $d_{i,0}$. Any order relation $x\leq y$ not dictated by these
definitions is taken to be false. So for example, $e_{0}\vee
e_{1}=1=f_{0}\vee f_{1}$, $e_{0}\wedge e_{1}=0=f_{0}\wedge f_{1}$, $x\vee
p=1 $ for any $x\neq 0$, $d_{i,i}\vee \hat{p}=1$ for every $i$, etc. It is
tedious but straightforward to verify that the partial order so defined
imposes on the elements described a lattice structure $\mathcal{L}$ (i.e.\
for every $x,y\in \mathcal{L}$ there is a least upper bound and a greatest
lower bound for the pair in the defined ordering) that has all the desired
properties.

If we now take $X$ to be, for example, the complement of $\mathcal{O}$, then
the top of any embedding of $\mathcal{K}$ as an initial segment would be
above $\mathcal{O}$ and so the degrees below it could not be isomorphic to $%
\mathcal{K}$ (as, for example, every countable partial order can be embedded
in $\mathcal{D}_{h}$ below $\deg _{h}\mathcal{O}$ by Feferman [1965]).
\end{proof}

On the other hand, there are initial segments of $\mathcal{D}_{h}$ which are
not lattices. Indeed, the usual proof that $\mathcal{D}_{T}$ is not a
lattice can be carried out for $\mathcal{D}_{h}$ by using Cohen forcing in
the hyperarithmetic setting to show that the degrees below $\mathcal{O}$ are
not a lattice (as is pointed out in Odifreddi [1983a, Proposition 8.3(b)].
Thus we have our next question.

\begin{question}
Which countable usls are isomorphic to initial segments of $\mathcal{D}_{h}$?
\end{question}

This question seems wide open and, by the above Proposition, must need some
new construction technique. One attractive possibility would be a positive
answer to the following.

\begin{question}
\label{hypusl}Is every hyperarithmetic usl isomorphic to an initial segment
of $\mathcal{D}_{h}$?
\end{question}

Here even the simplest example seems to need some new idea.

\begin{question}
Is the usl consisting precisely of an initial segment $x_{n} $ of type $%
\omega $ with an exact pair $x,y$ and their join (i.e.\ $0=x_{0}$, $\forall
n(x_{n}<x_{n+1}\leq x,y)$ and $x\vee y=1$) isomorphic to an initial segment
of $\mathcal{D}_{h}$?
\end{question}

On the other hand, we have no particularly plausible candidate for a
counterexample to Question \ref{hypusl}.


\begin{thebibliography}{99}
\bibitem{ash86} Abraham, U. and Shore, R. A. [1986], Initial segments of the
degrees of size $\aleph _{1},$ \emph{Israel J. Math. }\textbf{53}, 1-51.

\bibitem{as86a} Abraham, U. and Shore, R. A. [1986a], The degrees of
constructibility below a Cohen real, \emph{J. Lon. Math. Soc. }(3) \textbf{53%
}, 193-208.

\bibitem{ad76} Adamowicz, Z. [1976], On finite lattices of degrees of
constructibility, \emph{J. Symb. Logic }\textbf{41}, 313-322.

\bibitem{ad77} Adamowicz, Z. [1977], Constructible semi-lattices of degrees
of constructibility, in \emph{Set Theory and Hierarchy Theory V}, Lachlan,
Srebny and Zarach eds., LNM \textbf{619}, Springer-Verlag, Berlin.

\bibitem{bh78} Balcar, B. and Hajek, P. [1978], On sequences of degrees of
constructibility, \emph{Z. Math. Logik Grundlag. Math. }\textbf{24}, 291-296.

\bibitem{co66} Cohen, P. [1966], \emph{Set Theory and the Continuum
Hypothesis}, Benjamin, New York.

\bibitem{do07} Dorais, F. [2007], Souslin Trees and Degrees of
Constructibility, Ph. D. Thesis, Dartmouth College.

\bibitem{fa83} Farrington, P. [1983], Hinges and automorphisms of the
degrees of constructibility, \emph{J. Lon. Math. Soc. }(2) \textbf{28},
193-202.

\bibitem{fa84} Farrington, P. [1984], First order theory of the c-degrees, 
\emph{Z. Math. Logik Grundlag. Math. }\textbf{30}, 437-446.

\bibitem{fef65} Feferman, S. [1965], Some applications of the notion of
forcing and generic sets, \emph{Fund. Math. }\textbf{56}, 325-45.

\bibitem{gasa67} Gandy, R. O. and Sacks, G. E. [1967], A minimal
hyperdegree, \emph{Fund. Math.} \textbf{61}, 215-223.

\bibitem{gr03} Gr\"{a}tzer, G. [2003], \emph{General Lattice Theory,} 2nd
ed., Birkh\"{a}user Verlag, Basel.

\bibitem{gsh88} Groszek, M. S. and Shore, R. A. [1988], Initial segments of
the degrees of constructibility, \emph{Israel J. Math. }\textbf{63}, 149-177.

\bibitem{gs83} Groszek, M. S. and Slaman, T. A. [1983], Independence results
on the global structure of the Turing degrees, \emph{Trans. Am. Math. Soc.} 
\textbf{277}, 579-588.

\bibitem{ho93} Hodges, W. [1993], \emph{Model Theory}, Encyclopedia of
Mathematics and its Applications \textbf{42}, Cambridge University Press,
Cambridge, England.

\bibitem{jon53} J\'{o}nsson, B. [1953], On the representations of lattices, 
\emph{Math. Scand. }\textbf{1}, 193-206.

\bibitem{kj02} Kjos-Hanssen, B. [2002], Lattice initial segments of the
Turing degrees, Ph. D. Thesis, University of California, Berkeley.

\bibitem{kj03} Kjos-Hanssen, B. [2003], Local initial segments of the Turing
degrees, \emph{B. Symb. Logic }\textbf{9}, 26-36.

\bibitem{kp54} Kleene, S. C. and Post, E. L. [1954], The upper semi-lattice
of degrees of recursive unsolvability, \emph{Ann. Math.} (2) \textbf{59},
379--407.

\bibitem{la68} Lachlan, A. H. [1968], Distributive initial segments of the
degrees of unsolvability, \emph{Z. Math. Logik Grund. Math.} \textbf{14},
457-472.

\bibitem{lale76} Lachlan, A. H. and Lebeuf, R. [1976], Countable initial
segments of the degrees of unsolvability, \emph{J. Symb. Logic} \textbf{41},
289-300.

\bibitem{le71} Lerman, M.\ [1971], Initial segments of the degrees of
unsolvability, \emph{Ann. Math. }\textbf{93}, 365-89.

\bibitem{le83} Lerman, M.\ [1983], \emph{Degrees of Unsolvability},
Perspectives in Mathematical Logic, Springer-Verlag, Berlin.

\bibitem{lu87} Lubarsky, R. [1987], Lattices of c-degrees, \emph{Ann. Pure
and Applied Logic} \textbf{36}, 115-118.

\bibitem{mnsh04} Miller, R. G., Nies, A. O. and Shore, R. A. [2004], The $%
\forall \exists $-theory of $\mathcal{R}(\leq ,\vee ,\wedge )$ is
undecidable, \emph{Trans. Am. Math. Soc. }\textbf{356}, 3025-3067.

\bibitem{ns80} Nerode, A. and Shore, R. A. [1980], Second order logic and
first order theories of reducibility orderings, in \emph{The Kleene Symposium%
}, J. Barwise, H. J. Keisler and K. Kunen, eds., North-Holland, Amsterdam,
181-200.

\bibitem{od83} Odifreddi, P. [1983], Forcing and reducibilities, \emph{J.
Symb. Logic }\textbf{48}, 288-310.

\bibitem{od83a} Odifreddi, P. [1983a], Forcing and reducibilities II:
forcing in fragments of analysis, \emph{J. Symb. Logic }\textbf{48}, 724-743.

\bibitem{sa63} Sacks, G. E. [1963], \emph{Degrees of unsolvability}, Annals
of Math. Studies \textbf{55}, Princeton Univ. Press, Princeton, New Jersey.

\bibitem{sa71} Sacks, G. E. [1971], Forcing with perfect closed sets, in%
\emph{\ Axiomatic set Theory}, Proc. Symp. Pure Math. \textbf{XII}, Part 1,
AMS, Providence, Rhode Island.

\bibitem{sa72} Sacks, G. E. [1972], review of Thomason [1970], \emph{Math.
Reviews}, issue 2,\emph{\ }MR0288027(44\#5225).

\bibitem{sa90} Sacks, G. E. [1990], \emph{Higher Recursion Theory},
Perspectives in Mathematical Logic, Springer-Verlag, Berlin.

\bibitem{se88} Selivanov, V. L. [1988], Algorithmic complexity of algebraic
systems, \emph{Mat. Zametki} \textbf{44}, 823--832, 863; translation in 
\emph{Math. Notes} \textbf{44} , 944--950 (1989)

\bibitem{sh78} Shore, R. A. [1978], On the $\forall \exists $-sentences of $%
\alpha $-recursion theory, in \emph{Generalized Recursion Theory II}, J. E.
Fenstad, R. O. Gandy and G. E. Sacks eds., \emph{Studies in Logic and the
Foundations of Mathematics}\textbf{\ 94}, North-Holland, Amsterdam, 331-354.

\bibitem{sh81} Shore, R. A. [1981], The theory of the degrees below $%
0^{\prime }$, \emph{J. Lon. Math. Soc. }(3) \textbf{24}, 1-14.

\bibitem{sh07} Shore, R. A. [2007], Local definitions in degree structures:
the Turing jump, hyperdegrees and beyond, \emph{B. Symb. Logic }\textbf{13},
226-239.

\bibitem{sh08} Shore, R. A. [2008], Rigidity and biinterpretability in the
hyperdegrees, to appear.

\bibitem{si85} Simpson, M. F. [1985], Arithmetic Degrees: Initial Segments, $%
\omega $-REA Operators and the $\omega $-jump, Ph. D. thesis, Cornell
University.

\bibitem{sp56} Spector, C. [1956], On degrees of recursive unsolvability, 
\emph{Ann. Math. (2) }\textbf{64}, 581-592.

\bibitem{th67} Thomason, S. K. [1967], The forcing method and the upper
semilattice of hyperdegrees, \emph{Trans. Am. Math. Soc.} \textbf{129},
38-57.

\bibitem{th69} Thomason, S. K. [1969], A note on non-distributive
sublattices of degrees and hyperdegrees, \emph{Canad. J. Math. }\textbf{21},
147-148.

\bibitem{th70} Thomason, S. K. [1970], On initial segments of hyperdegrees, 
\emph{J. Symb. Logic }\textbf{35}, 189-197.
\end{thebibliography}
\end{document}